\newtheorem{theorem}{Theorem}
\newtheorem{lemma}[theorem]{Lemma}
\newtheorem{proposition}[theorem]{Proposition}
\newtheorem{corollary}[theorem]{Corollary}
\newtheorem{openquestion}{Open question}
\newcommand{\C}{\mathbb{C}}
\newcommand{\R}{\mathbb{R}}
\newcommand{\Z}{\mathbb{Z}}
\newcommand{\Q}{\mathbb{Q}}
\newcommand{\expec}{\mathbb{E}}
\newcommand{\prob}{\mathbb{P}}
\newcommand{\rmd}{\mathrm{d}}
\newcommand{\one}{\mathbf{1}}
\newcommand{\Dir}{\mathcal{D}}
\newcommand{\disc}{\mathbb{D}}
\DeclareMathOperator{\sech}{sech}
\DeclareMathOperator{\sn}{sn}
\DeclareMathOperator{\cn}{cn}
\DeclareMathOperator{\dn}{dn}
\DeclareMathOperator{\arcsn}{arcsn}
\DeclareMathOperator{\tr}{tr}
\DeclareMathOperator{\im}{Im}
\DeclareMathOperator{\re}{Re}
\newcommand{\DirProd}[2]{\left\langle{#1},{#2}\right\rangle_\Dir}
\title{Winding of simple walks on the square lattice}
\author{Timothy Budd\thanks{Institute for Mathematics, Astrophysics and Particle Physics, Radboud University Nijmegen. \hfill  \href{mailto:T.Budd@science.ru.nl}{\texttt{T.Budd@science.ru.nl}}}}
\begin{document}

\maketitle

\begin{abstract}
A method is described to count simple diagonal walks on $\Z^2$ with a fixed starting point and endpoint on one of the axes and a fixed winding angle around the origin.
The method involves the decomposition of such walks into smaller pieces, the generating functions of which are encoded in a commuting set of Hilbert space operators.
The general enumeration problem is then solved by obtaining an explicit eigenvalue decomposition of these operators involving elliptic functions.
By further restricting the intermediate winding angles of the walks to some open interval, the method can be used to count various classes of walks restricted to cones in $\Z^2$ of opening angles that are integer multiples of $\pi/4$.

We present three applications of this main result. 
First we find an explicit generating function for the walks in such cones that start and end at the origin.
In the particular case of a cone of angle $3\pi/4$ these walks are directly related to Gessel's walks in the quadrant, and we provide a new proof of their enumeration.
Next we study the distribution of the winding angle of a simple random walk on $\Z^2$ around a point in the close vicinity of its starting point, for which we identify discrete analogues of the known hyperbolic secant laws and a probabilistic interpretation of the Jacobi elliptic functions. 
Finally we relate the spectrum of one of the Hilbert space operators to the enumeration of closed loops in $\Z^2$ with fixed winding number around the origin.
\end{abstract}

{\noindent\small
\textbf{Keywords: } Lattice walks; Random walks; Winding angle; Generating functions; Elliptic functions
}

\section{Introduction}

Counting of lattice paths has been a major topic in combinatorics (and probability and physics) for many decades. 
Especially the enumeration of various types of lattice walks confined to convex cones in $\Z^2$, like the positive quadrant, has attracted much attention in recent years, due mainly to the rich algebraic structure of the generating functions involved (see e.g \cite{bousquet-melou_walks_2010,bernardi_counting_2017} and references therein) and the relations with other combinatorial structures (e.g. \cite{bernardi_bijective_2007,kenyon_bipolar_2015}).
The study of lattice walks in non-convex cones has received much less attention. 
Notable exception are walks on the slit plane \cite{bousquet-melou_walks_2001,bousquet-melou_walks_2002} and the three-quarter plane \cite{bousquet-melou_square_2016}.
When describing the plane in polar coordinates, the confinement of walks to cones of different opening angles (with the tip positioned at the origin) may equally be phrased as a restriction on the angular coordinates of the sites visited by the walk.
One may generalize this concept by replacing the angular coordinate by a notion of winding angle of the walk around the origin, in which case one can even make sense of cones of angles larger than $2\pi$.
It stands to reason that a fine control over the winding angle in the enumeration of lattice walks brings us a long way in the study of walks in (especially non-convex) cones.

Although the winding angle of lattice walks seems to have received little attention in the combinatorics literature,  probabilistic aspects of the winding of long random walks have been studied in considerable detail \cite{belisle_windings_1989,belisle_winding_1991,rudnick_winding_1987,shi_windings_1998}.
In particular, it is known that under suitable conditions on the steps of the random walk the winding angle after $n$ steps is typically of order $\log n$, and that the angle normalized by $2/\log n$ converges in distribution to a standard hyperbolic secant distribution.
The methods used all rely on coupling to Brownian motion, for which the winding angle problem is easily studied with the help of its conformal properties.
Although quite generally applicable in the asymptotic regime, these techniques tell us little about the underlying combinatorics.
  
In this paper we initiate the combinatorial study of lattice walks with control on the winding angle, by looking at various classes of simple (rectilinear or diagonal) walks on $\Z^2$.
As we will see, the combinatorial tools described in this paper are strong enough to bridge the gap between the combinatorial study of walks in cones and the asymptotic winding of random walks.
Before describing the main results of the paper, we should start with some definitions. 

We let $\mathcal{W}$ be the set of \emph{simple diagonal walks} $w$ in $\Z^2$ of length $|w|\geq 0$ avoiding the origin, i.e. $w$ is a sequence $(w_i)_{i=0}^{|w|}$ in $\Z^2\setminus\{(0,0)\}$ with $w_{i}-w_{i-1} \in \{(1,1),(1,-1),(-1,-1),(-1,1)\}$ for $1\leq i\leq |w|$.
We define the \emph{winding angle $\theta_i^w \in \R$ of $w$ up to time $i$} to be the difference in angular coordinates of $w_i$ and $w_0$ including a contribution $2\pi$ (resp.\ $-2\pi$) for each full counterclockwise (resp. clockwise) turn of $w$ around the origin up to time $i$.
Equivalently, $(\theta_i^w)_{i=0}^{|w|}$ is the unique sequence in $\R$ such that $\theta_0^w = 0$ and $\theta_{i}^w-\theta_{i-1}^w$ is the (counterclockwise) angle between the segments $((0,0),w_{i-1})$ and $((0,0),w_{i})$ for $1\leq i\leq |w|$.
The \emph{(full) winding angle} of $w$ is then $\theta^w \coloneqq \theta_{|w|}^w$. 
See Figure \ref{fig:windingangle} for an example.

\begin{figure}
	\centering
	\includegraphics[width=6cm]{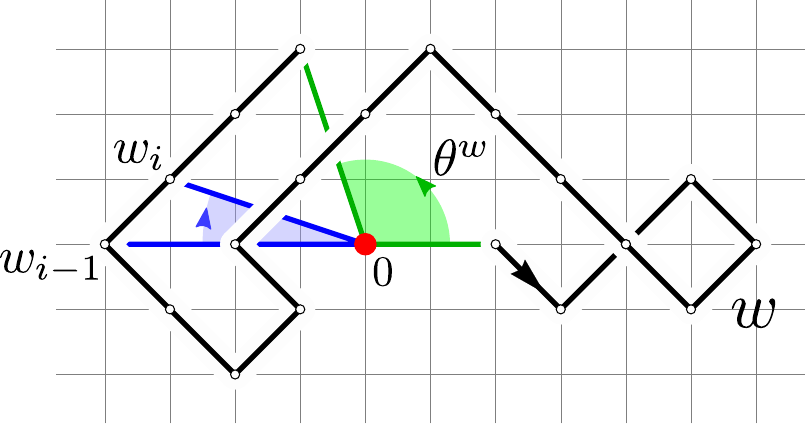}
	\caption{The winding angle of a simple diagonal walk $w\in\mathcal{W}$ of length $|w| = 19$ from $(2,0)$ to $(-1,3)$. The (full) winding angle $\theta^w$ is indicated in green, and the winding angle increment $\theta^w_{i}-\theta^w_{i-1}$ in blue (which is negative in this example).}\label{fig:windingangle}
\end{figure}

\paragraph{Main result} The \emph{Dirichlet space} $\mathcal{D}$ is the Hilbert space of complex analytic functions $f$ on the unit disc $\disc = \{ z\in \C : |z| < 1 \}$ that vanish at $0$ and have finite norm $\|f\|^2_{\Dir}$ with respect to the Dirichlet inner product
\[
\langle f,g\rangle_\Dir = \int_{\disc} \overline{f'(z)}\,g'(z) \rmd A(z)= \sum_{n=1}^\infty n\, \overline{[z^n]f(z)}\,\, [z^n]g(z),
\]
where the measure $\rmd A(z_1+iz_2)  \coloneqq \frac{1}{\pi}\rmd z_1\rmd z_2$ is chosen such that $\int_{\disc} \rmd A(z) = 1$.
See \cite{arcozzi_dirichlet_2011} for a review of its properties.
We denote by $(e_n)_{n=1}^\infty$ the standard orthogonal basis defined by $e_n(z) \coloneqq z^n$, which is unnormalized since $\|e_n\|^2_\Dir = n$.
With this notation $\DirProd{e_n}{f} = n [z^n] f(z)$ for any analytic function $f\in\Dir$.

For $k\in(0,1)$ we let $v_k :  \C \setminus \{z\in\R : z^2 \geq k\} \to \C$ be the analytic function defined by the elliptic integral
\begin{equation}\label{eq:vkmap}
v_k(z) \coloneqq \frac{1}{4K(k)} \int_0^z \frac{\rmd x}{\sqrt{(k-x^2)(1-k x^2)}}
\end{equation}
along the simplest path from $0$ to $z$,
where $K(k)$ is the complete elliptic integral of the first kind with elliptic modulus $k$ (see Appendix \ref{sec:ellipticappendix} for definitions and notation).
The appearance of this elliptic integral in lattice walks enumeration is a natural one since $\frac{2}{\pi} K(4t)$ is precisely the generating function for excursions of the simple diagonal walk from the origin (see \eqref{eq:Kexpansion} in Appendix \ref{sec:ellipticappendix}),
\begin{equation}\label{eq:standardexcursiongenfun}
\frac{2}{\pi}K(4t) = \sum_{n=0}^\infty \binom{2n}{n}^2 \,t^{2n}.
\end{equation}
The incomplete elliptic integral $v_k$ does not have a comparably simple combinatorial interpretation, but provides an important conformal mapping from a slit disk onto a rectangle in the complex plane, as detailed in Section \ref{sec:operatorJ}.

For fixed $k$ we use the conventional notation
\[
k' = \sqrt{1-k^2} \qquad \text{and}\qquad k_1 = \frac{1-k'}{1+k'}
\]
for the \emph{complementary modulus} $k'$ and the \emph{descending Landen transformation} $k_1$ of $k$, which both take values in $(0,1)$ again (see Appendix \ref{sec:ellipticappendix}).
Using these we introduce a family $(f_m)_{m=1}^\infty$ of analytic functions by setting (notice the $k_1$ in $v_{k_1}(z)$!)
\begin{equation}\label{eq:fmbasis}
f_m(z) \coloneqq \cos( 2\pi m (v_{k_1}(z) + 1/4)) - \cos(\pi m/2),
\end{equation}
which satisfies $f_m(0) = 0$.
Even though $v_{k_1}(z)$ has branch cuts at $z=\pm\sqrt{k_1}$, we will see (Lemma \ref{thm:isomorphism}, \ref{thm:fourierbasis}, \ref{thm:fmradius}) that $f_m(z)$ has radius of convergence around $0$ equal to $1/\sqrt{k_1} > 1$ and has finite norm with respect to the Dirichlet inner product, hence $f_m \in \Dir$. 
According to Proposition \ref{thm:Jdiagonalization} the norm of $f_m$ is given explicitly by  
\begin{equation}\label{eq:fnorm}
\| f_m\|^2_{\Dir} = \frac{m(q_k^{-m}-q_k^m)}{4}, 
\end{equation}
where $q_k \in(0,1)$ is the \emph{(elliptic) nome} of modulus $k$ (see \eqref{eq:nomedef} in Appendix \ref{sec:ellipticappendix}), which is analytic for $k$ in the unit disk.
Once properly normalized the family of functions $(\hat{f}_m)_{m=1}^\infty$ provides an orthonormal basis of $\Dir$, i.e.
\begin{equation*}
\hat{f}_m(z) \coloneqq \frac{f_m(z)}{\|f_m\|_{\Dir}} = \frac{\cos( 2\pi m (v_{k_1}(z) + 1/4)) - \cos(\pi m/2)}{\sqrt{\frac{m}{4}(q_k^{-m}-q_k^m)}}, \qquad\DirProd{\hat{f}_n}{\hat{f}_m} = \one_{n=m}.
\end{equation*}

The main technical result of this paper is the following.

\begin{theorem}\label{thm:mainresult}
	For $\ell,p\geq 1$ and $\alpha\in\frac{\pi}{2}\Z$, let $\mathcal{W}^{(\alpha)}_{\ell,p}$ be the set of (possibly empty) simple diagonal walks $w$ on $\Z^2\setminus\{(0,0)\}$ that start at $(p,0)$, end on one of the axes at distance $\ell$ from the origin, and have full winding angle $\theta^w=\alpha$.
	\begin{itemize}
		\item[(i)]
		Let $W^{(\alpha)}_{\ell,p}(t) \coloneqq \sum_{w\in\mathcal{W}^{(\alpha)}_{\ell,p}} t^{|w|}$ be the generating function of $\mathcal{W}^{(\alpha)}_{\ell,p}$.
		For $k=4t\in(0,1)$ fixed, there exists a compact self-adjoint operator $\mathbf{Y}^{(\alpha)}_k$ on $\Dir$ with eigenvectors $(f_m)_{m=1}^\infty$ such that
		\begin{equation}
		W^{(\alpha)}_{\ell,p}(t) = \langle e_\ell, \mathbf{Y}^{(\alpha)}_k e_p\rangle_\Dir, \qquad \mathbf{Y}^{(\alpha)}_k f_m = \frac{2K(k)}{\pi} \frac{1}{m} q_k^{m|\alpha|/\pi}\, f_m.
		\end{equation}
		\item[(ii)] Let $\mathcal{W}^{(\alpha,I)}_{\ell,p}\subset\mathcal{W}^{(\alpha)}_{\ell,p}$ be the subset of the aforementioned walks that have intermediate winding angles in an interval $I\subset\R$, i.e. $\theta_i^w \in I$ for $i=1,2,\ldots,|w|-1$, and let $W_{\ell,p}^{(\alpha,I)}(t)$ be the corresponding generating function.
		If $I = (\beta_-,\beta_+)$ with $\beta_\pm\in\frac{\pi}{2}\Z\cup\{\pm\infty\}$, $\alpha\in[\beta_-,\beta_+]\cap \frac{\pi}{2}\Z$ and $\alpha\neq0$ or $\alpha\neq \beta_\pm$, then the generating function $W_{\ell,p}^{(\alpha,I)}(t)$ is related to a matrix element of a compact self-adjoint operator on $\mathcal{D}$ with the same eigenvectors $(f_m)_{m=1}^\infty$, as described in the table below.
	\[	
	\begin{array}{*4{>{\displaystyle}c}rl}
	\toprule 
	\addlinespace
	\alpha & \beta_- & \beta_+ & W_{\ell,p}^{(\alpha,I)}(t) & &\text{Eigenvalues} \\ \hline
	\addlinespace
	>0 & 0 & \alpha & \frac{1}{\ell p}\langle e_\ell, \mathbf{A}^{(\alpha)}_k e_p\rangle &		\mathbf{A}^{(\alpha)}_k f_m =&  \frac{\pi}{2K(k)} \frac{m}{q_k^{-m\alpha/\pi}-q_k^{m\alpha/\pi}}\, f_m \\
	\addlinespace
	>0 & <0 & \alpha & \frac{1}{\ell} \langle e_\ell, \mathbf{J}^{(\alpha,\beta_-)}_k e_p\rangle &		\mathbf{J}^{(\alpha,\beta_-)}_k f_m =& \frac{q_k^{2m\beta_-/\pi}-1}{q_k^{2m\beta_-/\pi}-q_k^{2m\alpha/\pi}}q_k^{m\alpha/\pi}\, f_m\\
	\addlinespace
	\geq0 & <0 & >\alpha & \langle e_\ell, \mathbf{B}^{(\alpha,\beta_-,\beta_+)}_k e_p\rangle &		\mathbf{B}^{(\alpha,\beta_-,\beta_+)}_k f_m =& \frac{2K(k)}{\pi}\, \frac{q_k^{2m\beta_-/\pi}-1}{m\,q_k^{m\alpha/\pi}}\, \frac{q_k^{2m\alpha/\pi}-q_k^{2m\beta_+/\pi}}{q_k^{2m\beta_-/\pi}-q_k^{2m\beta_+/\pi}}\, f_m\\
	\bottomrule	
	\end{array}
	\]
	The remaining cases follow from the symmetries $(\alpha,\beta_-,\beta_+)\to(-\alpha,-\beta_+,-\beta_-)$ and $(\alpha,\beta_-,\beta_+)\to(\alpha,\alpha-\beta_+,\alpha-\beta_-)$, and the cases $\beta_\pm = \pm\infty$ agree with the corresponding limits $\beta\pm\to\pm\infty$ (using that $q_k \in (0,1)$).
		\item[(iii)] The statement of (ii) remains valid for $\beta_\pm\in\frac{\pi}{4}\Z\cup\{\pm\infty\}$ and $\alpha\in[\beta_-,\beta_+]\cap \frac{\pi}{2}\Z$ and $\alpha\neq 0$ or $\alpha \neq \beta_\pm$ as long as $\ell$ and $p$ are even.

	\end{itemize}
\end{theorem}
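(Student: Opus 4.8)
The plan is to reduce all three parts to a single ``sector transfer operator'' on $\Dir$, which is diagonalised once and for all by the elliptic basis $(f_m)$, after which everything becomes spectral bookkeeping. First I would slice a walk $w\in\mathcal{W}^{(\alpha)}_{l,p}$ at the successive times its winding angle $\theta^w_i$ passes through a multiple of $\pi/2$, that is, at the instants it crosses one of the four half-axes. Since the endpoints lie on the axes and $\alpha\in\frac{\pi}{2}\Z$, this decomposes $w$ into a concatenation of \emph{sector excursions}, each starting and ending on a half-axis with winding angle confined to a closed interval of width $\pi/2$. Recording only the radial coordinate of a half-axis visit at distance $p$ by the basis vector $e_p$, the generating function of the excursions that advance the winding by $+\pi/2$ defines a linear operator $\mathbf{T}_k$ on $\Dir$, and Theorem \ref{thm:mainresult} becomes a statement about products of $\mathbf{T}_k$ read off as matrix elements $\langle e_l,\,\cdot\,e_p\rangle_\Dir$.

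Next I would establish the analytic properties of $\mathbf{T}_k$. Self-adjointness should follow because reflecting a sector excursion across the diagonal that bisects its sector and then reversing its time direction maps a $+\pi/2$ excursion from radius $p$ to radius $l$ into one of the same length from radius $l$ to radius $p$; hence $\mathbf{T}_k$ equals its transpose, and its matrix elements being real, it is self-adjoint. Compactness I would get by bounding $\langle e_l,\mathbf{T}_k e_p\rangle_\Dir$: reaching radius $\max(l,p)$ costs at least that many steps, so these coefficients decay geometrically in $l+p$ once $k=4t<1$, making $\mathbf{T}_k$ Hilbert--Schmidt. The essential input is then Proposition \ref{thm:Jdiagonalization}, which supplies the orthogonal eigenbasis $(f_m)$ and the norms $\|f_m\|^2_\Dir$; I would use it to assert $\mathbf{T}_k f_m = q_k^{m/2} f_m$, so that each $\pi/2$ of winding contributes a factor $q_k^{m/2}$. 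I expect the main obstacle of the whole argument to sit exactly here: identifying the combinatorial generating function encoded in $\mathbf{T}_k$ with the operator whose eigenfunctions are the $v_{k_1}$-maps of \eqref{eq:fmbasis}. Concretely one must show that the explicit $e_p$-to-$e_l$ sector generating functions equal the spectral sum $\sum_m q_k^{m/2}\,\|f_m\|_\Dir^{-2}\,\langle e_l,f_m\rangle_\Dir\,\langle f_m,e_p\rangle_\Dir$, which is precisely where the Jacobi elliptic functions do the genuine work; everything else is comparatively mechanical.

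With the diagonalisation in hand, part (i) follows by composing sectors. A walk of total winding $\alpha=N\pi/2$ with \emph{unconstrained} intermediate winding is a sum over all excursion sequences of net height $N$; since the relevant operators commute, being simultaneously diagonalised by $(f_m)$, this sum collapses to a power of $\mathbf{T}_k$ dressed by the boundary factor $\frac{2K(k)}{\pi}\frac1m$ that converts the bare half-axis crossings at the two ends into prescribed radii $p$ and $l$, yielding the eigenvalue $\frac{2K(k)}{\pi}\frac1m q_k^{m|\alpha|/\pi}$ of $\mathbf{Y}^{(\alpha)}_k$. For part (ii) the constraint $\theta^w_i\in(\beta_-,\beta_+)$ turns the free winding-angle propagator into a boundary value problem on $[\beta_-,\beta_+]$: an endpoint of $I$ coinciding with a measured boundary ($0$ or $\alpha$) acts as an absorbing wall, while an endpoint strictly beyond acts as a reflecting wall. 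Solving this one-dimensional problem in the $(f_m)$ basis by the method of images --- each reflecting wall at $\beta_\pm$ contributing an image source weighted by $q_k^{2m\beta_\pm/\pi}$ --- reproduces $\mathbf{A}^{(\alpha)}_k$, $\mathbf{J}^{(\alpha,\beta_-)}_k$ and $\mathbf{B}^{(\alpha,\beta_-,\beta_+)}_k$. The two quoted symmetries are the reflections $\theta\mapsto-\theta$ and $\theta\mapsto\alpha-\theta$, and the limits $\beta_\pm\to\pm\infty$ are immediate since $q_k\in(0,1)$.

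Finally, part (iii) refines the slicing by cutting $w$ also when its winding angle crosses an odd multiple of $\pi/4$, i.e.\ when it crosses one of the diagonals $y=\pm x$. This introduces a ``half-sector'' operator acting as $q_k^{m/4}$ on $f_m$, a square root of $\mathbf{T}_k$, and rerunning the boundary value analysis with walls now allowed in $\frac{\pi}{4}\Z$ gives the same table of eigenvalues. The one genuinely new point is the parity hypothesis: a diagonal crossing occurs at a lattice point $(n,n)$ of even coordinate sum, whereas $x+y\bmod 2$ is preserved along any diagonal walk, so a walk from $(p,0)$ can reach a diagonal only when $p$ is even, forcing $l$ even as well. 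This is exactly the even sublattice on which the half-sector operator is well defined, which is why (iii) holds precisely when $l$ and $p$ are even.
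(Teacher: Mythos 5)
Your architecture is genuinely the paper's: slice at the visits to the half-axes, diagonalise the resulting sector transfer operator in the elliptic basis of Proposition \ref{thm:Jdiagonalization}, obtain (ii) by the method of images and (iii) by a parity argument. But two of the concrete assertions on which the plan rests are false as stated. First, the eigenvalue of the sector operator. If $\mathbf{T}_k$ is the operator encoding the excursions between consecutive axis visits (the paper's $\mathbf{J}_k$, i.e.\ walks in $\mathcal{W}_{l,p}^{(\pi/2,(-\pi/2,\pi/2))}$), then Proposition \ref{thm:Jdiagonalization} gives $\mathbf{J}_k f_m = (q_k^{m/2}+q_k^{-m/2})^{-1}f_m$, \emph{not} $q_k^{m/2}f_m$; the eigenvalue $q_k^{m/2}$ belongs to the first-passage operator $\mathbf{J}_k^{(\pi/2,-\infty)}$, and a first-passage decomposition cannot produce $\mathcal{W}^{(\alpha)}_{l,p}$, whose intermediate winding angle is unconstrained and may overshoot $\alpha$ or dip below $0$. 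The correct bookkeeping is $W^{(\alpha)}_{l,p}=\sum_N a^{(\alpha)}_N\langle e_l,\mathbf{B}_k\mathbf{J}_k^N e_p\rangle_\Dir$ with $a^{(\alpha)}_N$ the number of $\pm\frac{\pi}{2}$-step walks from $0$ to $\alpha$ of length $N$; evaluating $\sum_N a^{(\alpha)}_N x^N$ at $x=(q_k^{m/2}+q_k^{-m/2})^{-1}$ yields $\frac{1+q_k^m}{1-q_k^m}q_k^{m|\alpha|/\pi}$, and the prefactor cancels against the eigenvalue $\frac{2K(k)}{\pi}\frac{1}{m}\frac{1-q_k^m}{1+q_k^m}$ of $\mathbf{B}_k$. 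Your ``boundary factor $\frac{2K(k)}{\pi}\frac{1}{m}$'' is not the eigenvalue of any single building block, so the right answer in your write-up is reached by an accounting that does not match the decomposition you describe. Second, the self-adjointness argument: time reversal maps the constraint interval $(-\frac{\pi}{2},\frac{\pi}{2})$ to $(0,\pi)$, and in fact $J_{l,p}=\frac{p}{l}J_{p,l}$ rather than $J_{l,p}=J_{p,l}$ (Ballot counting gives $J_{l,p}=\sum_n\frac{p}{n}\binom{n}{(n+l)/2}\binom{n}{(n+p)/2}t^n$). Self-adjointness holds only because the operator is defined with the weight $\langle e_l,\mathbf{J}_ke_p\rangle_\Dir=l\,J_{l,p}$ against $\|e_l\|^2_\Dir=l$; the paper obtains it structurally from the factorisation $\mathbf{J}_k=\mathbf{R}_k^\dagger\mathbf{R}_k$ with $\mathbf{R}_kf=f\circ\psi_k$.

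Two smaller points. For (ii), the image method applies directly only when both $0$ and $\alpha$ lie in the open interval $I$ (the paper's Lemma \ref{thm:reflection}); the rows of the table with a wall sitting at $0$ or at $\alpha$ are not obtained by declaring that wall ``absorbing'' but by composing the image operator with $\mathbf{A}_k$ on one or both sides, and $\mathbf{A}_k$'s eigenvalue must be derived separately (the paper gets it from $\mathbf{J}_k=\mathbf{A}_k\mathbf{B}_k$ after diagonalising $\mathbf{B}_k$ by a contour-integral computation). For (iii), no ``half-sector operator with eigenvalue $q_k^{m/4}$'' is needed, and introducing one would require a new diagonalisation proof: since $\alpha\in\frac{\pi}{2}\Z$ and $\delta=2(\beta_+-\beta_-)\in\frac{\pi}{2}\Z$, every image angle $\alpha+n\delta$ and $2\beta_+-\alpha+n\delta$ again lies in $\frac{\pi}{2}\Z$, so the operators $\mathbf{Y}^{(\cdot)}_k$ of part (i) suffice. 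Your parity observation is the right one, but it must be sharpened to the statement actually used: a diagonal walk of even coordinate-sum cannot have its winding angle \emph{cross} an odd multiple of $\frac{\pi}{4}$ without attaining it (a single step changes $x-y$ by $0$ or $\pm2$), which is what makes the reflection at a diagonal wall well defined.
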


We emphasize that the convention for the index placement in $\mathcal{W}_{\ell,p}^{(\alpha,I)}$, that is used also for other sets of walks throughout this work, is that the first index corresponds to the endpoint of the walk and the second index to the starting point.
The reason for this choice is precisely the relation of the generating function $W^{(\alpha)}_{\ell,p}$ to the matrix element at position $(\ell,p)$ of a Hilbert space operator.
As we will see, composition of particular families of walks often corresponds to the composition of the respective operators (or multiplication of the respective infinite matrices).
The index placement reflects the natural right-to-left ordering in the composition of these operators.

\begin{figure}[ht]
	\begin{subfigure}[c]{0.333\textwidth}
		\centering
		\includegraphics[width=.95\linewidth]{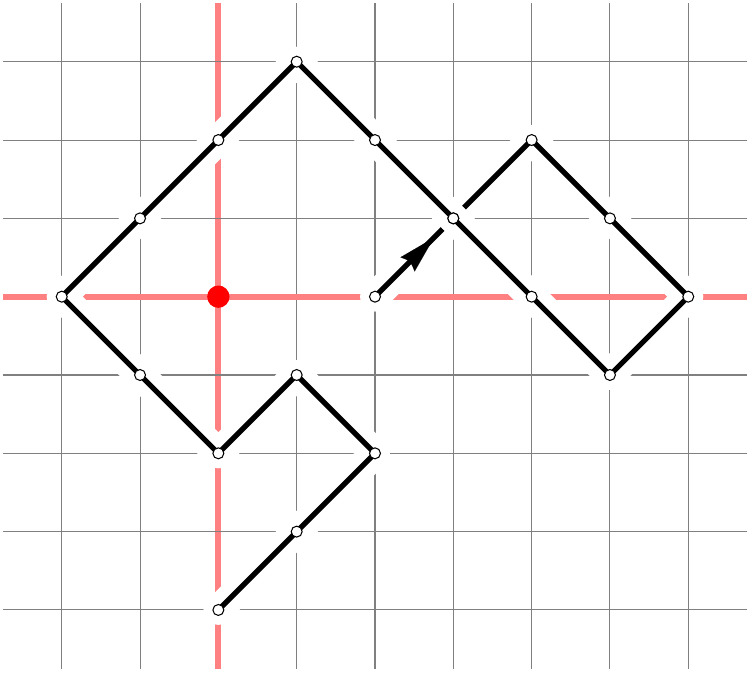}
		\caption{$\mathcal{W}_{4,2}^{(3\pi/2)}$}\label{fig:axeswalks1}
	\end{subfigure}%
	\begin{subfigure}[c]{0.333\textwidth}
		\centering
		\includegraphics[width=.95\linewidth]{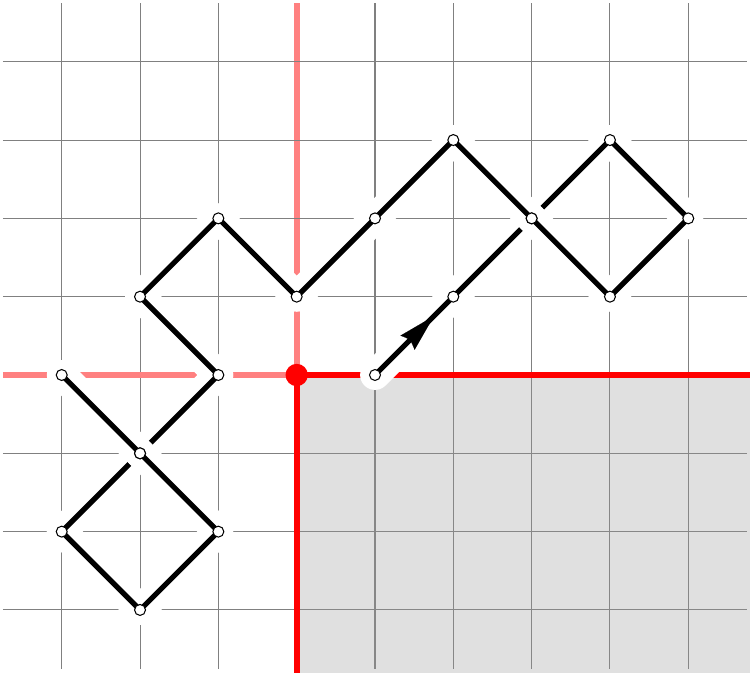}
		\caption{$\mathcal{W}_{3,1}^{(\pi,(0,3\pi/2))}$}\label{fig:axeswalks2}
	\end{subfigure}%
	\begin{subfigure}[c]{0.333\textwidth}
		\centering
		\includegraphics[width=.95\linewidth]{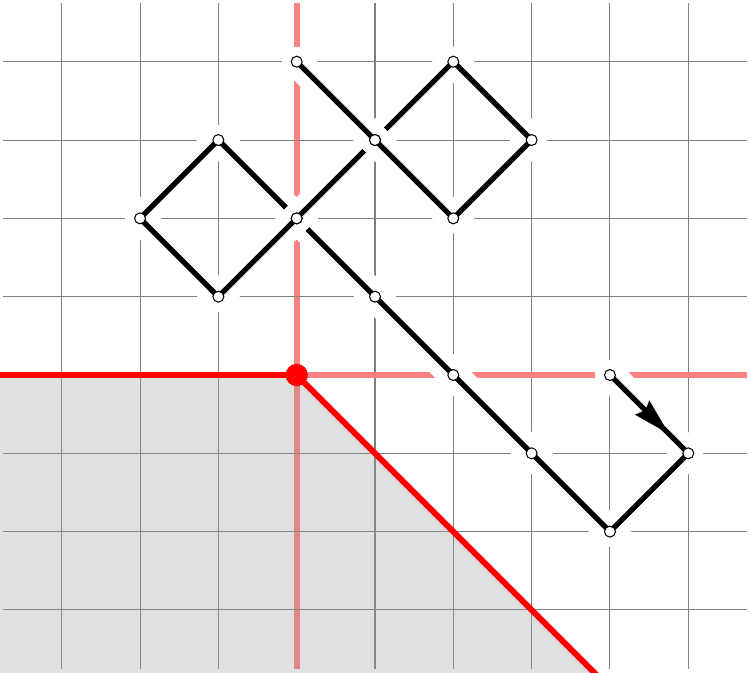}
		\caption{$\mathcal{W}_{4,4}^{(\pi/2,(-\pi/4,\pi))}$}\label{fig:axeswalks3}
	\end{subfigure}%
	\caption{Examples of walks enumerated by Theorem \ref{thm:mainresult}.}\label{fig:axeswalks}
\end{figure}

Theorem \ref{thm:mainresult} is stated for fixed real values of $k=4t\in(0,1)$, while from a combinatorial point of view it may be preferable to think of the generating functions $W_{\ell,p}^{(\alpha,I)}(t)$ as formal power series in the variable $t$.
This raises the question to what extent the eigenvalue decomposition can be understood on the level of formal power series.
To this end we prove in Proposition \ref{thm:fhatanalytic} that for any $m,p\geq 1$ the coefficient $[z^p]\hat{f}_m(z) = \frac{1}{p}\DirProd{e_p}{\hat{f}_m}$ is an analytic function in $k$ around $0$, such that we may interpret $\hat{f}_m(z) = \hat{f}_m(z;t)$ as taking values in the ring $\R[\![z,t]\!]$ of formal power series in the variables $z$ and $t$, see Section \ref{sec:analytic} for an explicit expansion for $m=1,\ldots,6$.
With this observation Theorem \ref{thm:mainresult} can be largely recast as a set of identities on formal power series.
To this end we denote the multivariate generating function of $\mathcal{W}_{\ell,p}^{(\alpha,I)}$ by
\begin{equation*}
W^{(\alpha,I)}(x,y;t) = \sum_{\ell,p\geq 1} x^\ell y^p W_{\ell,p}^{(\alpha,I)}(t) \in \R[\![x,y,t]\!],
\end{equation*}
and the eigenvalues of $\mathbf{A}_k^{(\alpha)}$, $\mathbf{J}_k^{(\alpha,\beta_-)}$, $\mathbf{B}_k^{(\alpha,\beta_-,\beta_+)}$ by $A_m^{(\alpha)}(t)$, $J_m^{(\alpha,\beta_-)}(t)$, $B_m^{(\alpha,\beta_-,\beta_+)}(t)$ respectively (this includes $\mathbf{Y}_k^{(\alpha)} = \mathbf{B}_k^{(\alpha,-\infty,\infty)}$ as a special case).
These eigenvalues, as given in Theorem \ref{thm:mainresult}(ii), are all analytic functions of $t$ in the unit disk (see \eqref{eq:nomeexpansion} and \eqref{eq:standardexcursiongenfun} for the power series representations of $q_k$ and $K(k)$). 
Provided $\alpha \neq 0$, the $m$th eigenvalue converges to $0$ as $m\to\infty$ in the formal topology of $\R[\![t]\!]$, i.e. for any $n\geq 0$ the coefficient $[t^n] A_m^{(\alpha)}(t)$ vanishes for all $m$ large enough. 
Theorem \ref{thm:mainresult} implies the convergent series identities in $\R[\![x,y,t]\!]$ for $\alpha\neq 0$,
\begin{align*}
W^{(\alpha,(0,\alpha))}(x,y;t) &= \sum_{m=1}^\infty A_m^{(\alpha)}(t) \hat{f}_m(x;t)\hat{f}_m(y;t), \\
W^{(\alpha,(\beta_-,\alpha))}(x,y;t) &= y\frac{\partial}{\partial y}\sum_{m=1}^\infty J_m^{(\alpha,\beta_-)}(t) \hat{f}_m(x;t)\hat{f}_m(y;t), \\
W^{(\alpha,(\beta_-,\beta_+))}(x,y;t) &= xy\frac{\partial}{\partial x}\frac{\partial}{\partial y}\sum_{m=1}^\infty B_m^{(\alpha,\beta_-,\beta_+)}(t) \hat{f}_m(x;t)\hat{f}_m(y;t),
\end{align*} 
with $\alpha$, $\beta_-$ and $\beta_+$ satisfying the respective assumptions described in Theorem \ref{thm:mainresult}(ii) and (iii).
We leave it as an open problem whether these identities and an expression for $\hat{f}_m(x;t)$ can be derived using exclusively formal power series.

As an example of how to compute explicit generating functions with the help of Theorem \ref{thm:mainresult}, let us look at the set $\mathcal{W}_{3,3}^{(\pi)}$ of simple diagonal walks from $(3,0)$ to $(-3,0)$ that have winding angle $\pi$ around the origin.
According to Theorem \ref{thm:mainresult}(i) its generating function satisfies
\begin{equation*}
W_{3,3}^{(\pi)}(t) = \DirProd{e_3}{\mathbf{Y}_k^{(\pi)} e_3} = \frac{2K(k)}{\pi}\sum_{m=1}^\infty \frac{1}{m} q_k^{m} \DirProd{e_3}{\hat{f}_m}^2 = 10\, t^6 + 280\, t^8 + 5661\, t^{10} + \cdots,
\end{equation*}
where we used the series expansions of $q_k$ and $K(k)$ (respectively \eqref{eq:nomeexpansion} and \eqref{eq:Kexpansion} in Appendix \ref{sec:ellipticappendix}) and that of $\DirProd{e_3}{\hat{f}_m}$ in Section \ref{sec:analytic}. 

\paragraph{Application: Excursions}
Theorem \ref{thm:mainresult} can be used to count many specialized classes of walks involving winding angles. 
The first quite natural counting problem we address is that of the \emph{(diagonal) excursions} $\mathcal{E}$ from the origin, i.e. $\mathcal{E}$ is the set of (non-empty) simple diagonal walks starting and ending at the origin with no intermediate returns (Figure \ref{fig:excursions1}).
Actually, in this case we may equally well consider simple rectilinear walks on $\Z^2$, thanks to the obvious linear mapping between the two types of walks (Figure \ref{fig:excursions2}).
\begin{figure}[t]
	\begin{subfigure}[c]{0.45\textwidth}
		\centering
		\includegraphics[height=.6\linewidth]{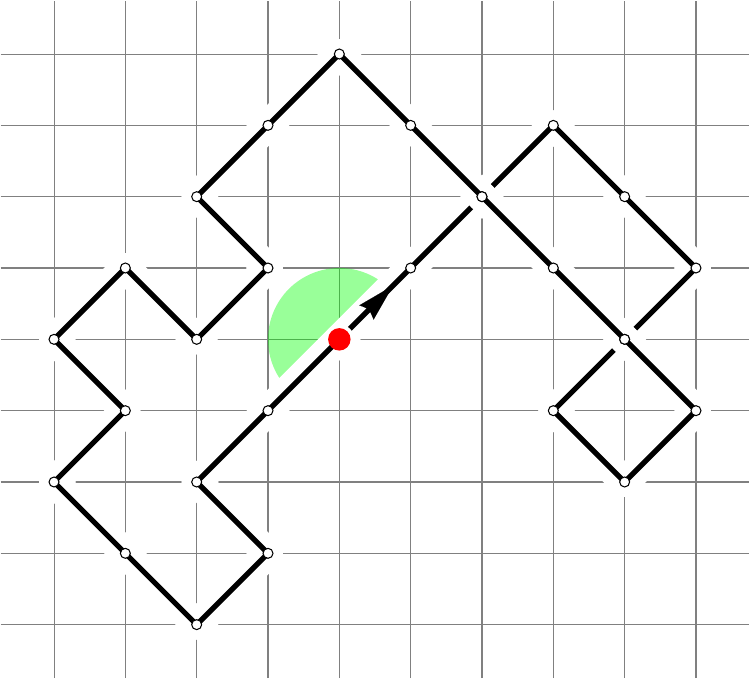}
		\caption{Diagonal excursion.}\label{fig:excursions1}
	\end{subfigure}%
	\begin{subfigure}[c]{0.45\textwidth}
		\centering
		\includegraphics[height=.6\linewidth]{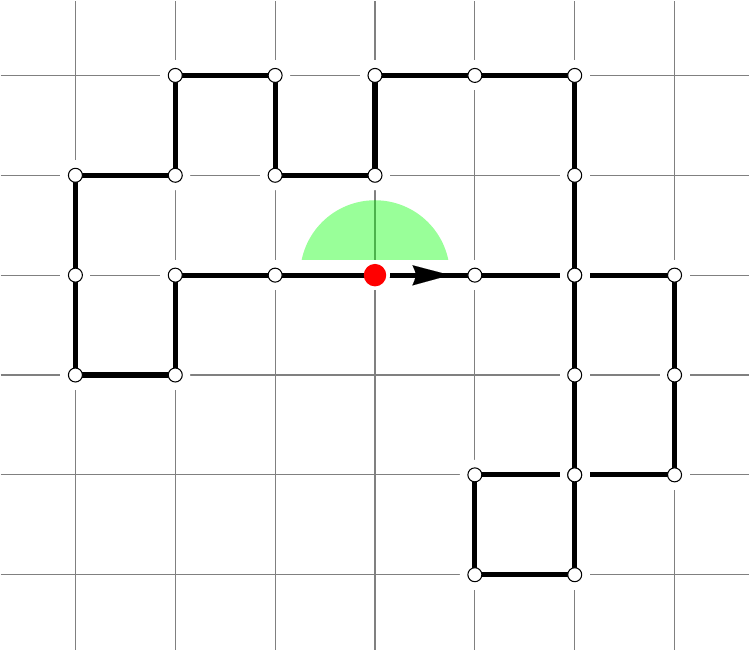}
		\caption{Rectilinear excursion.}\label{fig:excursions2}
	\end{subfigure}%
	\caption{Example of an excursion in diagonal and rectilinear form together with its winding angle.}\label{fig:excursions}
\end{figure}
Even though walks $w\in\mathcal{E}$ do not completely avoid the origin, we may still naturally assign a winding angle sequence to them by imposing that the first and last step do not contribute to the winding angle, i.e. $\theta^w_1 = \theta^w_0=0$ and $\theta^w = \theta^w_{|w|} = \theta^w_{|w|-1}$.
In Proposition \ref{thm:excursiongenfun} we prove that the generating function for excursions with winding angle $\alpha\in\frac{\pi}{2}\Z$ is given (for $k=4t\in(0,1)$ fixed) by
\[
F^{(\alpha)}(t) \coloneqq \sum_{w\in\mathcal{E}} t^{|w|} \one_{\{\theta^w = \alpha\}}= \frac{2\pi}{K(k)} \sum_{n=1}^\infty \frac{(1-q_k^{n})^2}{1-q_k^{4n}}q_k^{n(\frac{2}{\pi}|\alpha|+1)}.
\]
Since the summand is analytic in $t$ around $0$ and $O(t^{2n})$ for any $n$, the relation implies an identity of formal power series in $\R[\![t]\!]$ for any $\alpha\in\frac{\pi}{2}\Z$. 

Similarly to Theorem \ref{thm:mainresult}(ii) one may further restrict the full winding angle sequence of $w$ to lie in an open interval $I = (\beta_-,\beta_+)$ with $\beta_-\in\frac{\pi}{4}\Z_{<0}$, $\beta_+\in\frac{\pi}{4}\Z_{>0}$ and $\alpha \in I\cap \frac{\pi}{2}\Z$.
In this case it is more natural to also fix the starting direction, say $w_1 = (1,1)$, and we denote the corresponding generating function by $F^{(\alpha,I)}(t)$. Observe in particular that $F^{(\alpha,\R)}(t) = F^{(\alpha)}(t)/4$.
We prove in Theorem \ref{thm:conegenfun} that the generating function $F^{(\alpha,I)}(t)$ is given by the finite sum
\begin{equation}\label{eq:conegenfunintro}
F^{(\alpha,I)}(t)=\frac{\pi}{8\delta} \sum_{\sigma\in(0,\delta)\cap \frac{\pi}{2}\Z}\left( \cos\left( \frac{4\sigma\alpha}{\delta} \right) - \cos\left( \frac{4\sigma(2\beta_+ - \alpha)}{\delta} \right)\right) F\left(t,\frac{4\sigma}{\delta}\right), \qquad \delta \coloneqq 2(\beta_+-\beta_-),
\end{equation}
where $F(t,b)\coloneqq \sum_{\alpha\in\frac{\pi}{2}\Z} F^{(\alpha)}(t)\, e^{ib\alpha}$ is the ``characteristic function'' associated to $F^{(\alpha)}(t)$.
Since $F(t,b)=F(t,4-b)$, the terms in the right-hand side of \eqref{eq:conegenfunintro} corresponding to $\sigma$ and $\delta - \sigma$ are actually identical, therefore leaving only $\lfloor \frac{\delta}{\pi}\rfloor$ distinct terms.
For non-integer values of $b$ (see Proposition \ref{thm:excursiongenfun} for the full expression) $F(t,b)$ can be expressed in closed form as
\begin{equation}\label{eq:Fcharfun}
F(t,b)= \frac{1}{\cos\left(\frac{\pi b}{2}\right)}\left[ 1-\frac{\pi \tan\left(\frac{\pi b}{4}\right)}{2K(k)} \frac{\theta_1'\left(\frac{\pi b}{4},\sqrt{q_k}\right)}{\theta_1\left(\frac{\pi b}{4},\sqrt{q_k}\right)}\right],\qquad (b\in\R\setminus\Z)
\end{equation}
where $\theta_1(z,q)$ is the first Jacobi theta function (see \eqref{eq:theta1def} for a definition).
Again \eqref{eq:conegenfunintro} and \eqref{eq:Fcharfun} imply the equivalent identities on the level of formal power series in $\R[\![t]\!]$ or $\C[\![t]\!]$.
In Proposition \ref{thm:Falgebraic} we prove that the power series $F(t,b)$ is algebraic in $t$ for any $b\in \Q\setminus\Z$ but that is it transcendental for $b\in\Z$.
By inspecting the terms appearing in \eqref{eq:conegenfunintro} we find that $F^{(\alpha,I)}(t)$ is transcendental if and only if $\beta_+,\beta_-\in \frac{\pi}{2}\Z +\frac{\pi}{4}$ or both $\beta_+,\beta_-\in \pi\Z +\frac{\pi}{2}$ and $\alpha \in\pi\Z$ (see Theorem \ref{thm:conegenfun}).

\begin{figure}
	\centering
	\begin{subfigure}[c]{0.35\textwidth}
		\centering
		\reflectbox{\includegraphics[height=.7\linewidth,angle=90]{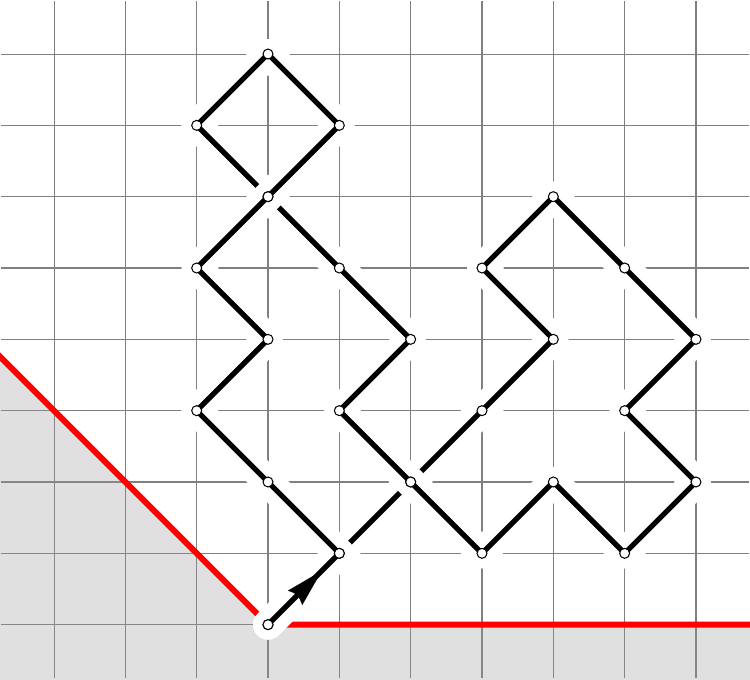}}
		\caption{}\label{fig:gesselwalk1}
	\end{subfigure}%
	\begin{subfigure}[c]{0.35\textwidth}
		\centering
		\reflectbox{\includegraphics[height=.7\linewidth,angle=90]{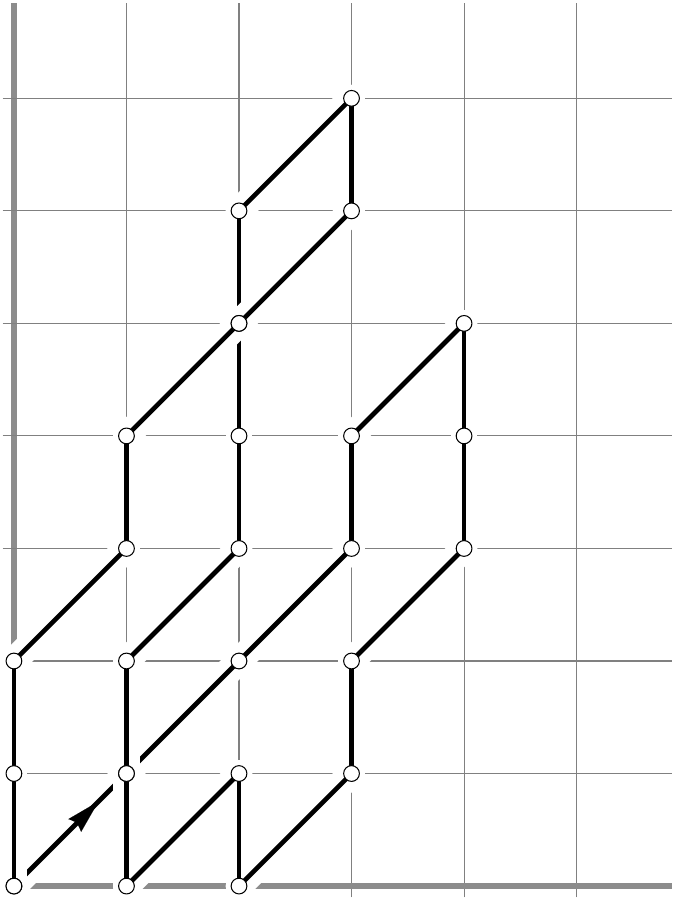}}
		\caption{}\label{fig:gesselwalk2}
	\end{subfigure}%
		\caption{Excursions (a) of length $2n+2$ staying in the angular interval $(-\pi/4,\pi/2)$ are in bijection with Gessel walks (b) in the quadrant starting and ending at the origin with $2n$ steps.}\label{fig:gesselwalk}
\end{figure}
A special case of excursions for which the generating function is algebraic is $\alpha = 0$ and $I=(-\pi/4,\pi/2)$, see Figure \ref{fig:gesselwalk1}.
After removal of the first and last step of the walk and a linear transformation, these correspond precisely to walks in the quadrant starting and ending at the origin with steps in $\{(-1,0),(1,0),(-1,-1),(1,1)\}$, also known as Gessel walks.
Around 2000 Ira Gessel conjectured that the generating function for such walks is given (in our notation) by
\[
\frac{1}{t^2}F^{(0,(-\pi/4,\pi/2))}(t) = \sum_{n=0}^\infty t^{2n}\,16^n \frac{(5/6)_n(1/2)_n}{(2)_n(5/3)_n} = 1+2 t^2+11 t^4+85 t^6+\cdots,
\]
where $(a)_n = \Gamma(a+n)/\Gamma(a)$ is the descending Pochhammer symbol (see also Sloane's \emph{Online Encyclopedia of Integer sequences} (OEIS) sequence \href{https://oeis.org/A135404}{A135404}).
The first computer-aided proof of this conjecture appeared in \cite{kauers_proof_2009}, and it was followed by several ``human'' proofs in \cite{bostan_human_2017,bousquet-melou_elementary_2016,bernardi_counting_2017}.
Here we provide an alternative proof using Theorem \ref{thm:conegenfun}.
Indeed, we have explicitly 
\[
F^{(0,(-\pi/4,\pi/2))}(t) = \frac{1}{8} F\left(t,\frac{4}{3}\right) + \frac{1}{8} F\left(t,\frac{8}{3}\right) = \frac{1}{4} F\left(t,\frac{4}{3}\right) = \frac{1}{2} \left[ \frac{\sqrt{3}\pi}{2K(4t)}  \frac{\theta_1'\left(\frac{\pi}{3},\sqrt{q_k}\right)}{\theta_1\left(\frac{\pi}{3},\sqrt{q_k}\right)}-1\right], 
\]
where used that $F(t,b)=F(t,4-b)$.
According to our discussion above this is an algebraic power series in $t$, a fact about $F^{(0,(-\pi/4,\pi/2))}(t)$ that was first observed in \cite{bostan_complete_2010}.
In Corollary \ref{thm:gessel} we deduce an explicit algebraic equation for $F^{(0,(-\pi/4,\pi/2))}(t)$, and check that it agrees with a known equation for $\sum_{n=0}^\infty t^{2n+2}\,16^n \frac{(5/6)_n(1/2)_n}{(2)_n(5/3)_n}$.

\paragraph{Application: Unconstrained random walks} Let $(W_i)_{i\geq0}$ be a simple random walk on $\Z^2$ started at the origin. 
A natural question is to ask for the (approximate) distribution of the winding angle $\theta_j^z$ of the random walk around some point $z\in\R^2$ up to time $j$.
As mentioned before, this question has been addressed successfully in the literature in the limit $j\to\infty$ using coupling to Brownian motion.
If $z\in(\Z + 1/2)^2$, then $2\theta_j^z/\log j$ is known \cite{rudnick_winding_1987,belisle_windings_1989,belisle_winding_1991,shi_windings_1998} to converge in distribution to the hyperbolic secant law with density $\frac{1}{2} \sech(\pi x/2)\rmd x$ (recall that $\sech y= 1/\cosh y = 2/(e^{y}+e^{-y})$).
If instead $z\in\Z^2$ and one conditions the random walk not to hit $z$ before time $j$, then $2\theta_j^z/\log j$ converges to a ``hyperbolic secant squared law'' with density $\frac{\pi}{4} \sech^2(\pi x/2)\rmd x$ \cite{rudnick_winding_1987}.

In Section \ref{sec:unconstrained} we complement these results by deriving exact statistics at finite $j$ with the help of Theorem \ref{thm:mainresult}.
To this end we look at the winding angles around two points in the vicinity of the starting point, namely $(-1/2,-1/2)$ and the origin itself. 
To be precise, let $\theta_{j+1/2}^\square$ be the winding angle of $(W_i)_i$ around $(-1/2,-1/2)$ up to time $j+1/2$, i.e. halfway its step from $W_j$ to $W_{j+1}$ (see Figure \ref{fig:freewinding}).
Similarly, let $\theta_{j+1/2}^\bullet$ be the winding angle around the origin, ignoring the first step and with the convention that $\theta_{j+1/2}^\bullet = \infty$ if $(W_i)_i$ has returned to the origin strictly before time $j+1$. 
\begin{figure}
	\centering
	\includegraphics[width=.6\linewidth]{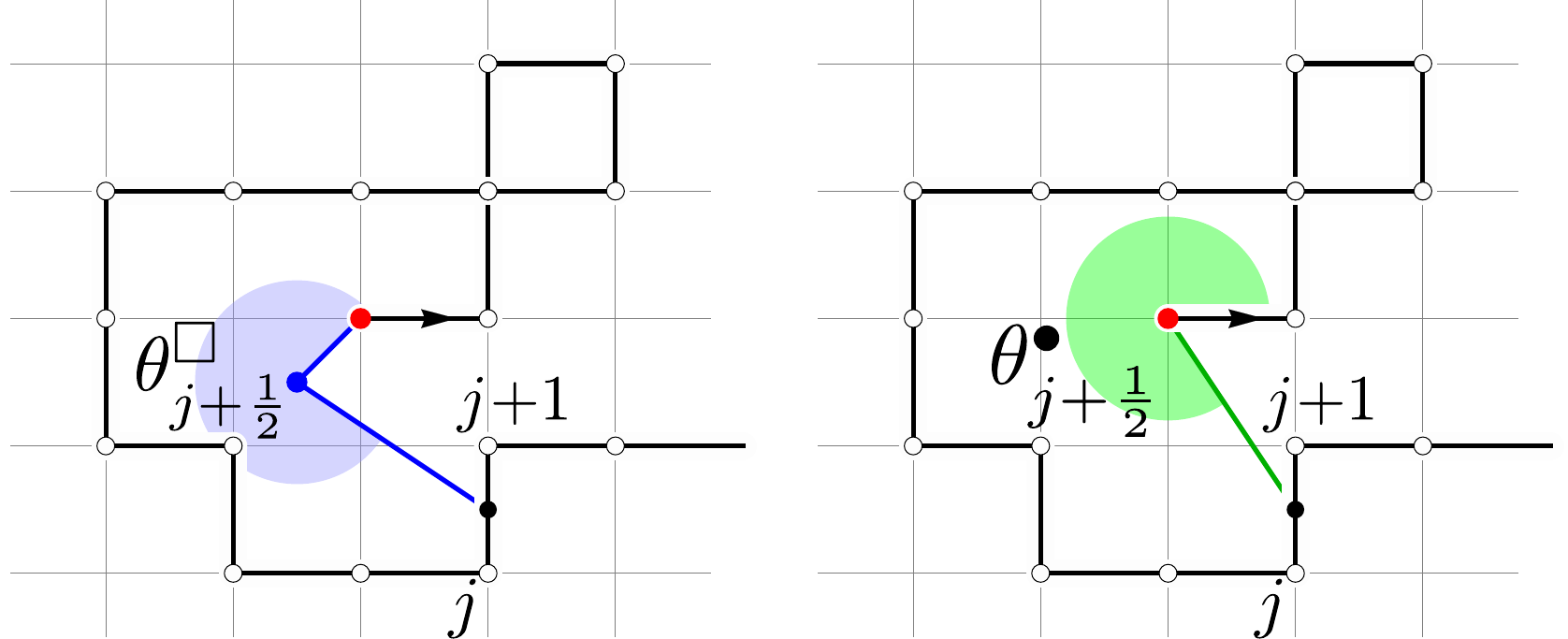}
	\caption{The winding angles $\theta^\square_{j+\frac12}$ and $\theta^\bullet_{j+\frac12}$ of a simple random walk at time $j+\frac{1}{2}$ around, respectively, a dual lattice point adjacent to the origin and the origin itself.\label{fig:freewinding}} 
\end{figure}

If $\zeta_k$ is a geometric random variable with parameter $k\in(0,1)$, i.e. $\prob[ \zeta_k = j ] = k^j(1-k)$ for $j\geq 0$, then the following ``discrete hyperbolic secant laws'' hold (Theorem \ref{thm:hypsecant}):
\begin{align*}
\prob\big[ \theta^\square_{\zeta_k+1/2} \in (\alpha-{\textstyle\frac{\pi}{2}}, \alpha+{\textstyle\frac{\pi}{2}}) \big] &= c_k \sech\left(4T_k\alpha\right) + {\textstyle\frac{k-1}{k}}\one_{\{\alpha=0\}}&&\text{for }\alpha\in{\textstyle\frac{\pi}{2}}\Z,\\
\prob\big[ \theta^{\bullet}_{\zeta_k+1/2} \in (\alpha-{\textstyle\frac{\pi}{2}}, \alpha+{\textstyle\frac{\pi}{2}})\big] &= C_k  \sech\left(4T_k(\alpha-{\textstyle\frac{\pi}{4}})\right)\sech\left(4T_k(\alpha+{\textstyle\frac{\pi}{4}})\right) &&\text{for }\alpha\in{\textstyle\frac{\pi}{2}\Z + \frac{\pi}{4}}, 
\end{align*} 
where $c_k$, $C_k$ and $T_k$ are explicit functions of $k$ (see Theorem \ref{thm:hypsecant}).

As a consequence we find probabilistic interpretations of the Jacobi elliptic functions $\cn$ and $\dn$ (see Appendix \ref{sec:ellipticappendix}) as characteristic functions of winding angles,
\[
\expec \exp\left( i b \{ \theta^\square_{\zeta_k+1/2} \}_{\pi\Z+\frac{\pi}{2}} \right) = \cn(K(k) b, k), \qquad  \expec \exp\left( i b \{ \theta^\square_{\zeta_k-1/2} \}_{\pi\Z} \right) = \dn(K(k) b, k).
\]
Here $\{\cdot\}_{A}$ denotes rounding to the nearest element of $A\subset\R$ and we set $\theta_{-1/2}^\square = 0$ by convention.

Since $\cn(y,1) = \dn(y,1) = \sech(y) = \expec e^{iy\eta}$ is the characteristic function of the aforementioned hyperbolic secant distribution, we may directly conclude the convergence in distribution as $k\to 1$ of the winding angle $\theta^\square_{\zeta_k+1/2}/K(k)$ at geometric time $\zeta_k$.
A more delicate singularity analysis, which is beyond the scope of this paper, would yield the same distributional limit for $2\theta^\square_{j+1/2}/\log(j)$ as $j\to\infty$, in accordance with previous work.

\paragraph{Application: Loops}
The last application we discuss utilizes the fact that the eigenvalues of the operators in Theorem \ref{thm:mainresult} have much simpler expressions than the components $\DirProd{e_p}{f_m}$ of the eigenvectors.
It is therefore worthwhile to seek combinatorial interpretations of traces of (combinations of) operators, the values of which only depend on the eigenvalues.
When writing out the trace in terms of the basis $(e_p)_{p=1}^\infty$ it is clear that such an interpretation must involve walks that start and end at arbitrary but equal distance from the origin.
If the full winding angle is taken to be a multiple of $2\pi$ then such a walk forms a \emph{loop}, i.e. it starts and ends at the same point.

A natural combinatorial set-up is described in Section \ref{sec:loops}.
There we consider the set $\mathcal{L}_n$ of \emph{rooted loops of index $n$}, $n\in\Z\setminus\{0\}$, which are simple diagonal walks avoiding the origin that start and end at an arbitrary but equal point in $\Z^2$ and have winding angle $2\pi n$ around the origin.
The set $\mathcal{L}_n = \mathcal{L}_n^{\text{even}} \cup \mathcal{L}_n^{\text{odd}}$ naturally partitions into loops that visit only sites of even respectively odd parity ($(x,y)\in\Z^2$ with $x+y$ even respectively odd), see Figure \ref{fig:rootedloops}.
We introduce the generating functions 
\begin{equation*}
L_n(t) \coloneqq \sum_{w\in\mathcal{L}_n}\frac{t^{|w|}}{|w|} \qquad L_n^{\text{odd}}(t) =\sum_{w\in\mathcal{L}^{\text{odd}}_n}\frac{t^{|w|}}{|w|}, \qquad  L_n^{\text{even}}(t) = \sum_{w\in\mathcal{L}^{\text{even}}_n}\frac{t^{|w|}}{|w|},
\end{equation*}
where we have included a factor $1/|w|$ for convenience, such that the generating functions of the set $\mathcal{L}_n$ is actually $t L_n'(t)$.
Observe that $L_1(t)$ is precisely the generating function of \emph{unrooted} loops of index $1$, i.e. rooted loops modulo rerooting (but preserving orientation), because the equivalence class of a rooted loop $w$ of index $1$ contains precisely $|w|$ elements. 
This is not true for $n \geq 2$, since $\mathcal{L}_n$ contains rooted loops $w$ that cover themselves multiple times and thus have equivalence classes with less than $|w|$ elements. 

\begin{figure}[t]
	\begin{subfigure}[c]{0.5\textwidth}
		\centering
		\includegraphics[height=.55\linewidth]{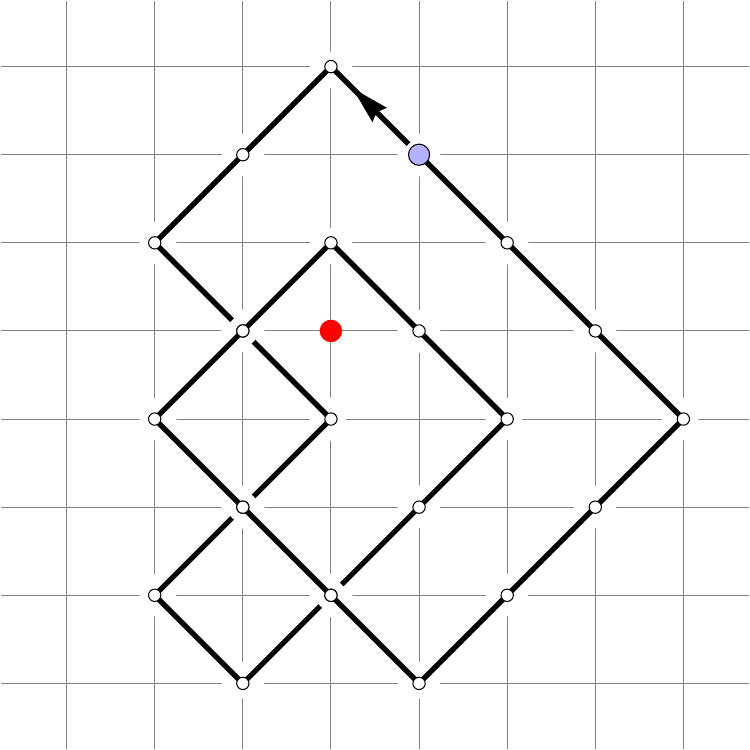}
		\caption{$\mathcal{L}_2^{\text{odd}}$}\label{fig:rootedloops1}
	\end{subfigure}%
	\begin{subfigure}[c]{0.5\textwidth}
		\centering
		\includegraphics[height=.55\linewidth]{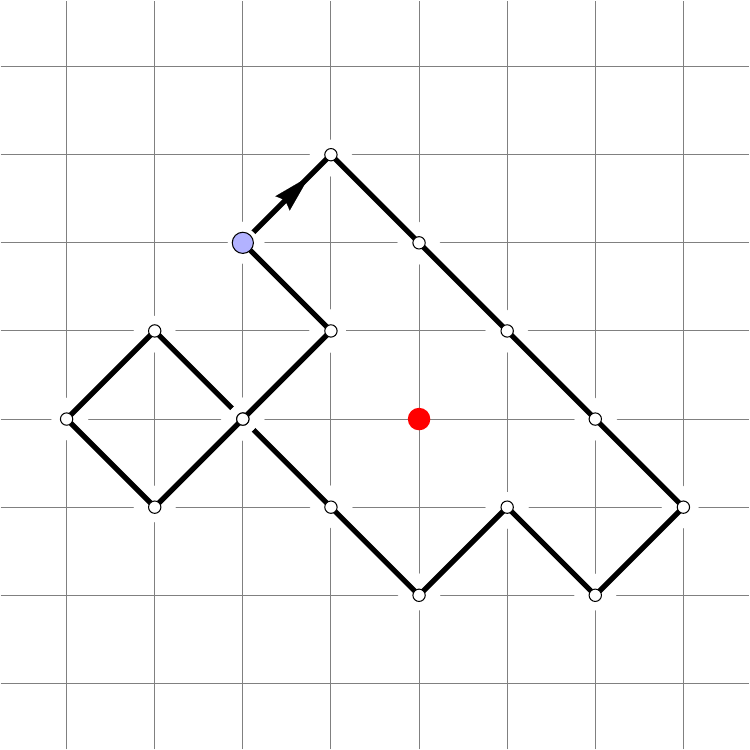}
		\caption{$\mathcal{L}_{-1}^{\text{even}}$}\label{fig:rootedloops2}
	\end{subfigure}%
	\caption{Two examples of rooted loops with different index and parity. }\label{fig:rootedloops}
\end{figure}

Theorem \ref{thm:loopgenfun} states that these generating functions for $n>0$ are given by
\[
L_n(t) = \frac{1}{n} \tr \mathbf{J}_k^{(2\pi n,-\infty)} = \frac{1}{n} \frac{q_k^{2n}}{1-q_k^{2n}}, \qquad L_n^{\text{odd}}(t) = \frac{1}{n} \frac{q_k^{2n}}{1-q_k^{4n}}, \qquad  L_n^{\text{even}}(t) = \frac{1}{n} \frac{q_k^{4n}}{1-q_k^{4n}}.
\]
Here $\mathbf{J}_k^{(2\pi n,-\infty)}$ is the operator $\mathbf{J}_k^{(\alpha,\beta_-)}$ with $\alpha = 2\pi n$ and $\beta_-=-\infty$ appearing in Theorem \ref{thm:mainresult}(ii) whose $m$th eigenvalue $q_k^{2 n m}$ can be obtained as the $\beta_-\to-\infty$ limit of the displayed expression (see the final remark of Theorem \ref{thm:mainresult}(ii) ). 
Note as before that these imply the equivalent identities on the level of power series in $\R[\![t]\!]$. 

\begin{figure}[b]
	\centering
	\includegraphics[height=.3\linewidth]{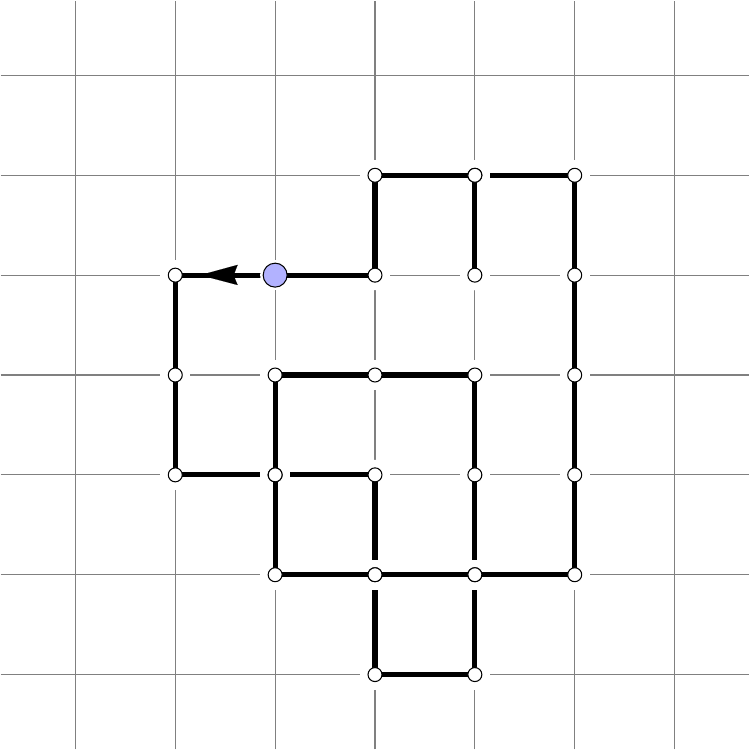}
	\hspace{-0.1cm}
	\includegraphics[height=.3\linewidth]{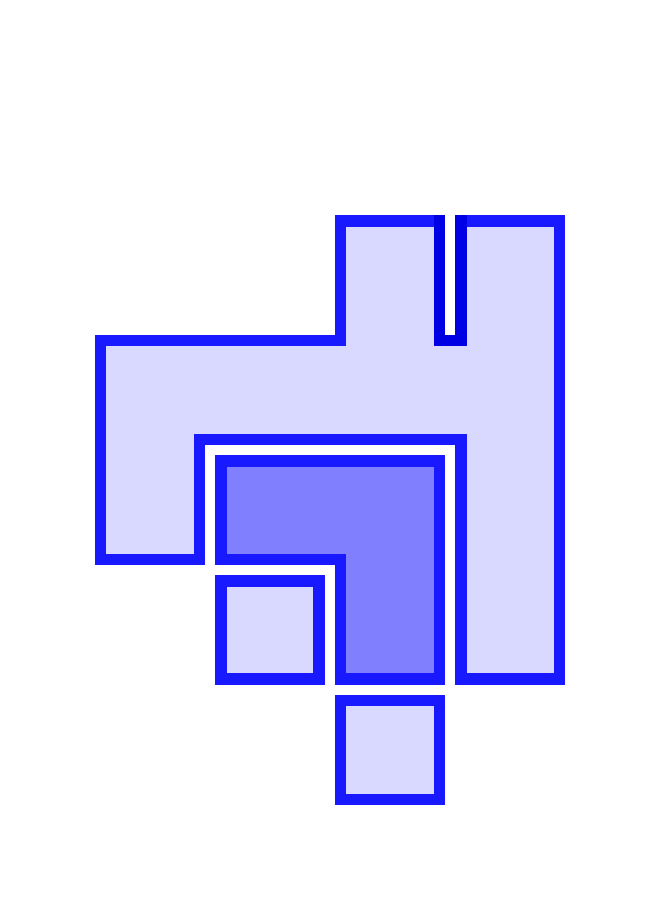}
	\caption{A simple walk on $\Z^2$ together with its clusters of index $1$ (light blue) and $2$ (dark blue). The largest cluster $c$ has area $|c|=9$ and boundary length $|\partial c|=20$ (notice that both sides of the ``slit'' contribute to the length). }\label{fig:loopcomponents}
\end{figure}%

A simple probabilistic consequence is the following. 
Let $(W_i)_{i=0}^{2\ell}$ be a simple random walk on $\Z^2$ started at the origin and conditioned to return after $2\ell$ steps.
For each point $z\in\R^2$ we let the \emph{index} $I_z$ be the signed number of times $(W_i)_{i=0}^{2\ell}$ winds around $z$ in counterclockwise direction, i.e. $2\pi I_z$ is the winding angle of $(W_i)_{i=0}^{2\ell}$ around $z$.
If $z$ lies on the trajectory of $(W_i)_{i=0}^{2\ell}$, then we set $I_z=\infty$.
We let \emph{the clusters $C_n$ of index $n$} be the set of connected components of $\{z\in\R^2:I_z=n\}$, and for $c\in C_n$ we let $|c|$ and $|\partial c|$ respectively be the area and boundary length of component $c$.
See Figure \ref{fig:loopcomponents} for an example.
The expectation values for the total area and boundary length of the clusters of index $n$ then satisfy
\begin{align*}
\expec\left[ \sum_{c\in C_n} |c|\right] &= \frac{4^{2\ell}}{\binom{2\ell}{\ell}^2}\frac{2\ell}{n} [k^{2\ell}]\frac{q_k^{2n}}{1-q_k^{4n}} \sim \frac{\ell}{2\pi n^2},\\
\expec\left[ \sum_{c\in C_n} (|\partial c|-2)\right] &= \frac{4^{2\ell}}{\binom{2\ell}{\ell}^2}\frac{4\ell}{n} [k^{2\ell}]\frac{q_k^{2n}}{1+q_k^{2n}} \sim \frac{2\pi^3 \ell}{\log^2 \ell},
\end{align*}
with the asymptotics as $\ell\to\infty$ indicated.
The first result should be compared to the analogous result for Brownian motion: Garban and Ferreras proved in \cite{garban_expected_2006} using Yor's work \cite{yor_loi_1980} that the expected area of the set of points with index $n$ with respect to a unit time Brownian bridge in $\R^2$ is equal to $1/(2\pi n^2)$.
Perhaps surprisingly, we find that the expected boundary length all the components of index $n$ (minus twice the expected number of such components) grows asymptotically at a rate that is independent of $n$, contrary to the total area.

\begin{openquestion}
Does $\expec\left[\sum_{c\in C_0\text{ finite}} |c|\right]$, i.e. the total area of the finite clusters of index $0$, have a similarly explicit expression? Based on the results of \cite{garban_expected_2006} we expect it to be asymptotic to $\pi \ell/30$ as $\ell\to\infty$. 	
\end{openquestion}

Finally we mention one more potential application of the enumeration of loops in Theorem \ref{thm:loopgenfun} in the context of \emph{random walk loop soups} \cite{lawler_random_2007}, which are certain Poisson processes of loops on $\Z^2$.
A natural quantity to consider in such a system is the \emph{winding field} which roughly assigns to any point $z\in\R^2$ the total index of all the loops in the process \cite{camia_conformal_2016,camia_brownian_2016}.
Theorem \ref{thm:loopgenfun} may be used to compute explicit expectation values (one-point functions of the corresponding vertex operators to be precise) in the massive version of the loop soups.
We will pursue this direction elsewhere.

\paragraph{Discussion} 
The connection between the enumeration of walks and the explicitly diagonalizable operators on Dirichlet space may seem a bit magical to the reader. 
So perhaps some comments are in order on how we arrived at this result, which originates in the combinatorial study of planar maps.

A \emph{planar map} is a connected multigraph (a graph with multiple edges and loops allowed) that is properly embedded in the $2$-sphere (edges are only allowed to meet at their extremities), viewed up to orientation-preserving homeomorphisms of the sphere. 
The connected components of the complement of a planar map are called the \emph{faces}, which have a \emph{degree} equal to the number of bounding edges. 
There exists a relatively simple multivariate generating function for \emph{bipartite} planar maps, i.e. maps with all faces of even degree, that have two distinguished faces of degree $p$ and $\ell$ and a fixed number of faces of each degree (see e.g. \cite{collet_simple_2014}).
The surprising fact is that this generating function has a form that is very similar to that of the generating function $W_{\ell,p}^{(\pi,(-\pi,\pi))}(t)$ of diagonal walks from $(p,0)$ to $(-\ell,0)$ that avoid the slit $\{(x,0) : x\leq 0\}$ until the end.
A combinatorial explanation (in the particular case of critical planar maps) appears in \cite{budd_peeling_2018} using a peeling exploration \cite{budd_peeling_2016,budd_geometry_2017}, 

If one further decorates the planar maps by a \emph{rigid $O(n)$ loop model} \cite{borot_recursive_2012}, then the combinatorial relation extends to one between walks of fixed winding angle $\mathcal{W}_{\ell,p}^{(\alpha)}$ with $\alpha\in\pi\Z$ and planar maps with two distinguished faces and certain collections of non-intersecting loops separating the two faces.
The combinatorics of the latter has been studied in considerable detail in \cite{borot_recursive_2012,borot_loop_2012,borot_nesting_2016,budd_peeling_2018}, which has inspired our treatment of the simple walks on $\Z^2$ in this paper.
Further details on the connection and an extension to more general lattice walks with \emph{small steps} (i.e. steps in $\{-1,0,1\}^2$) will be provided in forthcoming work. 

Finally, we point out that these methods can be used to determine Green's functions (and more general resolvents) for the Laplacian  on regular lattices in the presence of a conical defect or branched covering, which are relevant to the study of various two-dimensional statistical systems.
As an example, in recent work \cite{kenyon_greens_2017} Kenyon and Wilson computed the Green's function on the branched double cover of the square lattice, which has applications in local statistics of the uniform spanning tree on $\Z^2$ as well as dimer systems.  

\subsection*{Acknowledgements}
The author would like to thank Kilian Raschel, Alin Bostan and Ga\"etan Borot for their suggestions on how to prove Corollary \ref{thm:gessel}, and Thomas Prellberg for suggesting to extend Theorem \ref{thm:hypsecant} to absorbing boundary conditions.
The author is indebted to an anonymous referee for numerous corrections and suggestions to improve the exposition.
This work was supported by a public grant as part of the Investissement d’avenir project,
reference ANR-11-LABX-0056-LMH, LabEx LMH, as well as ERC-Advanced grant 291092, “Exploring the Quantum Universe” (EQU).
Part of this work was done while the author was at the Niels Bohr Institute, University of Copenhagen and Institut de Physique Théorique, CEA, Université Paris-Saclay.

\section{Winding angle of walks starting and ending on an axis}\label{sec:walksaxis}

Our strategy towards proving Theorem \ref{thm:mainresult} will be to first prove part (ii) for three special cases (see Figure \ref{fig:JBAwalks}), 
\[
\mathcal{J}_{\ell,p} \coloneqq \mathcal{W}_{\ell,p}^{(\frac{\pi}{2},(-\frac{\pi}{2},\frac{\pi}{2}))},\qquad  \mathcal{B}_{\ell,p} \coloneqq \mathcal{W}_{\ell,p}^{(0,(-\frac{\pi}{2},\frac{\pi}{2}))}, \qquad  \mathcal{A}_{\ell,p} \coloneqq \mathcal{W}_{\ell,p}^{(\frac{\pi}{2},(0,\frac{\pi}{2}))}.
\]
\begin{figure}[t]
	\begin{subfigure}[b]{0.333\textwidth}
		\centering
		\includegraphics[width=.9\linewidth]{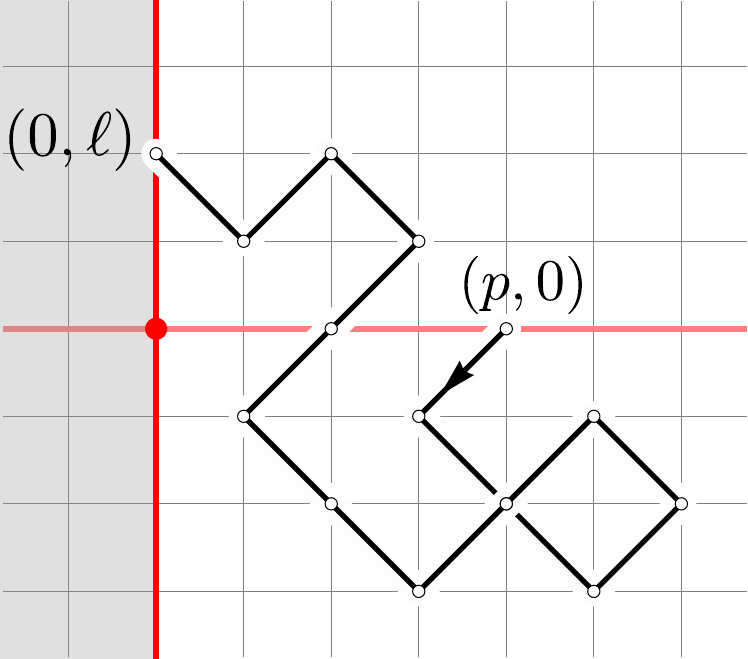}
		\caption{$\mathcal{J}_{\ell,p}$,\,\, $J_{\ell,p}(t) = \frac{1}{\ell} \DirProd{e_\ell}{\mathbf{J}_k e_p}$}\label{fig:jwalk}
	\end{subfigure}%
	\begin{subfigure}[b]{0.333\textwidth}
		\centering
		\includegraphics[width=.9\linewidth]{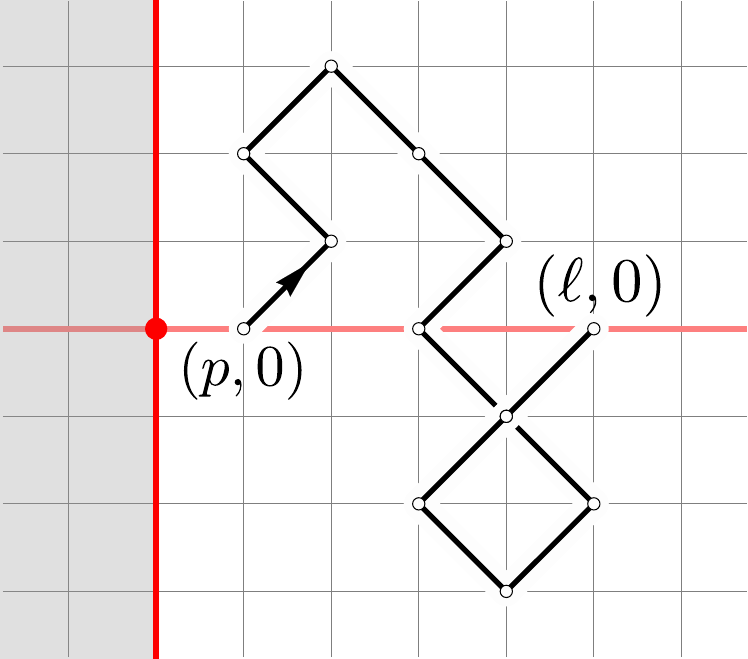}
		\caption{$\mathcal{B}_{\ell,p}$,\,\, $B_{\ell,p}(t)=\DirProd{e_\ell}{\mathbf{B}_k e_p}$}\label{fig:bwalk}
	\end{subfigure}%
	\begin{subfigure}[b]{0.333\textwidth}
		\centering
		\includegraphics[width=.9\linewidth]{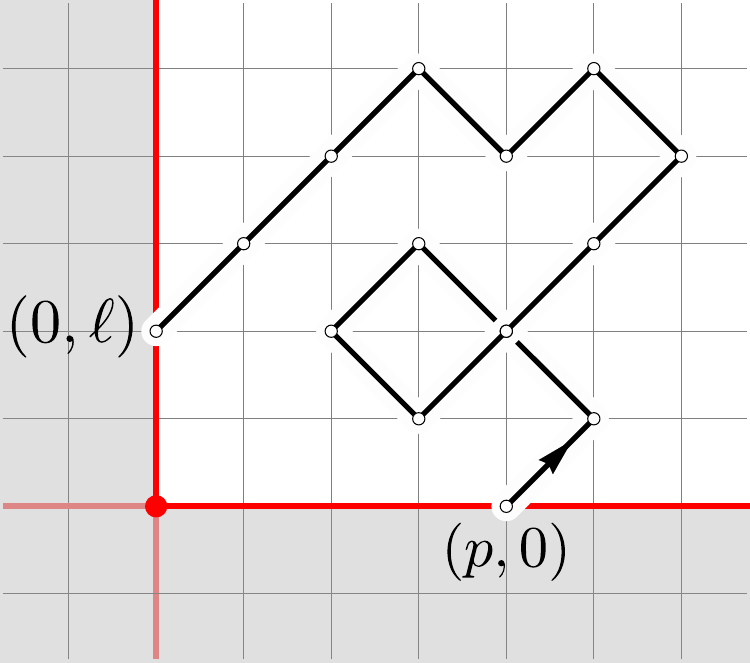}	
		\caption{$\mathcal{A}_{\ell,p}$,\,\, $ A_{\ell,p}(t)=\frac{1}{\ell p} \DirProd{e_\ell}{\mathbf{A}_k e_p}$}\label{fig:awalk}
	\end{subfigure}%
	\caption{Examples of walks contributing to the three types of building blocks. }\label{fig:JBAwalks}
\end{figure}%
We define three linear operators $\mathbf{J}_k$, $\mathbf{B}_k$ and $\mathbf{A}_k$ on $\Dir$ by specifying their matrix elements with respect to the standard basis $(e_i)_{i\geq 1}$ in terms of the corresponding generating functions $J_{\ell,p}(t)$, $B_{\ell,p}(t)$ and $A_{\ell,p}(t)$ (with $k=4t$) as
\[
\frac{1}{\ell} \DirProd{e_\ell}{\mathbf{J}_k e_p} = J_{\ell,p}(t), \qquad \DirProd{e_\ell}{\mathbf{B}_k e_p} = B_{\ell,p}(t), \qquad \frac{1}{\ell p} \DirProd{e_\ell}{\mathbf{A}_k e_p} = A_{\ell,p}(t).
\]
The motivation to define the operators in this way is given in Proposition \ref{thm:JAB}.

The results in this section will require some Hilbert space terminology (see \cite[Chapter IV]{reed1981functional} or \cite[Chapter 1]{zhu_operator_2007} for introductions).
A linear operator $\mathbf{T}$ on a Hilbert space $\mathcal{H}$ is \emph{bounded} if there exists a constant $C>0$ such that $\|\mathbf{T} y\|_{\mathcal{H}} \leq C \|y\|_{\mathcal{H}}$ for all $y\in \mathcal{H}$.
A bounded operator $\mathbf{T}$ is said to be \emph{compact} if $\mathbf{T}(\{y\in\mathcal{H} : \|y\|_{\mathcal{H}} \leq 1\})$ is compact in the \emph{norm topology} on $\mathcal{H}$, i.e. the topology on $\mathcal{H}$ induced by the distance $d(x,y) = \| x - y\|_{\mathcal{H}}$.
Finally, a linear operator $\mathbf{T}$ is \emph{self-adjoint} if $\langle x, \mathbf{T} y\rangle_{\mathcal{H}} = \langle \mathbf{T} x, y\rangle_{\mathcal{H}}$ for all $x,y \in \mathcal{H}$. 
For our purposes, the most useful characterization is that $\mathbf{T}$ is self-adjoint and compact if and only if there exists an orthonormal basis $(a_n)_{n=1}^\infty$ of $\mathcal{H}$ and a sequence of real numbers $(\lambda_n)_{n=1}^\infty$ satisfying $\lambda_n\to 0$ as $n\to\infty$ such that $\mathbf{T} y = \sum_{n=1}^\infty \lambda_n \langle y, a_n\rangle_\mathcal{H} a_n$ for all $y\in\mathcal{H}$.

\begin{proposition}\label{thm:JAB}
The linear operators $\mathbf{J}_k$, $\mathbf{B}_k$, $\mathbf{A}_k$ on $\Dir$ are bounded and satisfy $\mathbf{J}_k = \mathbf{A}_k\mathbf{B}_k$.
Moreover $\mathbf{J}_k$ and $\mathbf{A}_k$ are compact.
\end{proposition}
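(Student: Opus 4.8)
The plan is to treat the two claims separately, reducing the identity $\mathbf{J}_k=\mathbf{A}_k\mathbf{B}_k$ to a single combinatorial decomposition of walks. For bounded operators the matrix of a product is obtained from those of the factors through the orthogonal (but unnormalised) basis $(e_q)_{q\ge1}$, using $\|e_q\|_\Dir^2=q$:
\[
\DirProd{e_l}{\mathbf{A}_k\mathbf{B}_k e_p}=\sum_{q\ge1}\frac1q\,\DirProd{e_l}{\mathbf{A}_k e_q}\,\DirProd{e_q}{\mathbf{B}_k e_p}.
\]
Inserting the defining relations $\DirProd{e_l}{\mathbf{A}_k e_q}=lq\,A_{l,q}(t)$, $\DirProd{e_q}{\mathbf{B}_k e_p}=B_{q,p}(t)$ and $\DirProd{e_l}{\mathbf{J}_k e_p}=l\,J_{l,p}(t)$, and cancelling the common factor $l$, the operator identity becomes equivalent to the scalar identity
\[
J_{l,p}(t)=\sum_{q\ge1}A_{l,q}(t)\,B_{q,p}(t)\qquad(l,p\ge1),
\]
so the entire content of the factorisation is a bijective decomposition of the walks in $\mathcal{J}_{l,p}$.

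To establish this identity I would split each $w\in\mathcal{J}_{l,p}$ at its \emph{last passage through the positive $x$-axis}. Such a walk starts at $(p,0)$ with $\theta^w_0=0$, keeps every intermediate winding angle in $(-\pi/2,\pi/2)$ — hence stays strictly in the open right half-plane $\{x>0\}$ — and ends with $\theta^w=\pi/2$ on the positive $y$-axis. Inside this cone a vertex lies on the positive $x$-axis exactly when its winding angle is $0$; moreover a single diagonal step changes the ordinate by $\pm1$ and therefore cannot move between $\{y>0\}$ and $\{y<0\}$ without visiting $\{y=0\}$, so the winding angle can only change sign by hitting $0$. Letting $i^\ast$ be the largest index with $\theta^w_{i^\ast}=0$ and writing $w_{i^\ast}=(q,0)$, the prefix $(w_0,\dots,w_{i^\ast})$ returns to winding angle $0$ within $(-\pi/2,\pi/2)$ and so belongs to $\mathcal{B}_{q,p}$, while the suffix $(w_{i^\ast},\dots,w_{|w|})$ has winding angle in $(0,\pi/2)$ throughout (by maximality of $i^\ast$) and terminates at $\pi/2$, hence belongs to $\mathcal{A}_{l,q}$. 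Conversely any concatenation of $w_1\in\mathcal{B}_{q,p}$ with $w_2\in\mathcal{A}_{l,q}$ lands in $\mathcal{J}_{l,p}$ and has last passage at the junction, the case $q=p$ corresponding to an empty prefix (which is why the empty walk must be counted in $\mathcal{B}_{p,p}$). As $|w|=|w_1|+|w_2|$, this length-preserving bijection $\mathcal{J}_{l,p}\cong\bigsqcup_{q\ge1}\mathcal{B}_{q,p}\times\mathcal{A}_{l,q}$ yields the identity.

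For the boundedness claim I would estimate the generating functions by a geometric series, using $k=4t\in(0,1)$. There are at most $4^n$ diagonal walks of length $n$ from a fixed starting point, so each matrix element is dominated by $\sum_{n\ge n_0}4^n t^n=\sum_{n\ge n_0}k^n$, where $n_0$ is the least number of steps compatible with the endpoints. A $\mathcal{B}$-walk from $(p,0)$ to $(l,0)$ requires $n_0\ge|l-p|$ and an $\mathcal{A}$-walk from $(p,0)$ to $(0,l)$ requires $n_0\ge\max(l,p)\ge(l+p)/2$, whence $B_{l,p}(t)\lesssim k^{|l-p|}$ and $A_{l,p}(t)\lesssim k^{(l+p)/2}$. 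In the orthonormal basis $(e_n/\sqrt n)$ the Hilbert--Schmidt norms are then finite: $\sum_{l,p}B_{l,p}(t)^2/(lp)<\infty$ (near the diagonal it is controlled by $\sum_l l^{-2}$) and $\sum_{l,p}lp\,A_{l,p}(t)^2\lesssim\bigl(\sum_l l\,k^l\bigr)^2<\infty$. Thus $\mathbf{A}_k$ and $\mathbf{B}_k$ are Hilbert--Schmidt, in particular bounded and compact, and $\mathbf{J}_k=\mathbf{A}_k\mathbf{B}_k$ inherits boundedness.

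I expect the main obstacle to be the correctness of the decomposition rather than the estimates: one must verify that the ``no sign change without hitting $0$'' argument genuinely forces the split point onto the positive $x$-axis, that maximality of $i^\ast$ confines the suffix to the \emph{open} cone $(0,\pi/2)$ and not merely $[0,\pi/2)$, and that the boundary term $q=p$ is exactly absorbed by the empty walk in $\mathcal{B}_{p,p}$. This is precisely where the specific open/half-open choices in the definitions of $\mathcal{J}_{l,p}$, $\mathcal{A}_{l,p}$ and $\mathcal{B}_{l,p}$ — together with the normalising prefactors $1/l$, $1/(lp)$ distinguishing the three operators — have to be kept consistent.
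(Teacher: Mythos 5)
Your proposal is correct and follows essentially the same route as the paper: the identity $\mathbf{J}_k=\mathbf{A}_k\mathbf{B}_k$ is reduced to $J_{l,p}=\sum_q A_{l,q}B_{q,p}$ via the resolution of the identity in the unnormalised basis $(e_q)$, and this scalar identity is proved by splitting each walk in $\mathcal{J}_{l,p}$ at its last visit to the positive horizontal axis (you spell out the sign-change and boundary-case details that the paper leaves implicit, which is fine). The only genuine divergence is the boundedness of $\mathbf{B}_k$: the paper dominates its matrix in the orthonormal basis by an exponentially decaying Toeplitz symbol and invokes boundedness of the associated Toeplitz operator, whereas you observe that $\sum_{l,p}B_{l,p}(t)^2/(lp)<\infty$ thanks to the extra $1/(lp)$ weights, so $\mathbf{B}_k$ is in fact Hilbert--Schmidt; your estimate is correct and slightly stronger, since it yields compactness of $\mathbf{B}_k$ immediately rather than deferring it to the later spectral computation.
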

\begin{proof}
First we verify that for fixed $t\in (0,1/4)$ we have an exponential bound $A_{\ell,p}(t) < C e^{-\ell-p}$ for some $C>0$. 
To this end observe that $\mathcal{A}_{\ell,p}$ can be identified with a subset of (unconstrained) diagonal walks starting at $0$ and ending on the line $\{ (x,y) \in \Z^2 : y - x = p+\ell\}$.
Hence 
\begin{equation}\label{eq:Abound}
A_{\ell,p}(t) \leq \sum_{m,n=0}^\infty t^{m+n}\, 2^n \binom{m+n}{m}\binom{m}{\frac{m+p+\ell}{2}}\one_{\{\frac{m+p+\ell}{2}\in\Z\}} = \frac{1}{\sqrt{1-4t}} \left( \frac{1-2t-\sqrt{1-4t}}{2t}\right)^{\ell+p},
\end{equation}
which falls off exponentially in $\ell+p$.
It follows that $\mathbf{A}_k$ has finite Hilbert-Schmidt norm,
\[
\sum_{p=1}^\infty \frac{\| \mathbf{A}_k e_p \|_\Dir^2}{\|e_p\|_\Dir^2} = \sum_{p,\ell=1}^\infty p\, \ell\, A_{\ell,p}^2(t) <\infty.
\]
In particular, $\mathbf{A}_k$ is bounded and compact, see e.g. \cite[Theorem VI.22(e)]{reed1981functional}.

On the other hand, the matrix elements of $\mathbf{B}_k$ with respect to the orthonormal basis $(e_p/\sqrt{p})_{p\geq 1}$ are given by $B_{\ell,p}(t)/\sqrt{\ell p} \leq B_{\ell,p}(t) \leq b_{\ell-p}$, where $b_n$ is the generating function of (unconstrained) diagonal walks from the origin to $(n,0)$.
In particular $b_n$ is smaller than the right-hand side of \eqref{eq:Abound} when $\ell+p = n$ and therefore falls off exponentially, ensuring that $(b_{n})_{n\in\Z}$ are the Fourier coefficients of some continuous real $2\pi$-periodic function $\phi : \R\to\R$.
By a classical result \cite[Appendix]{hartman_spectra_1954} the Toeplitz operator associated to $\phi$, i.e. an operator with matrix elements $(b_{\ell-p})_{\ell,p\geq 1}$, is bounded because $\phi$ is bounded.
Since the operator norm of this Toeplitz operator bounds that of $\mathbf{B}_k$, $\mathbf{B}_k$ is also bounded.

For $\ell,p,m\geq 1$, composition of walks determines a bijection between pairs of walks in $\mathcal{A}_{\ell,m}\times \mathcal{B}_{m,p}$ and walks in $\mathcal{J}_{\ell,p}$ for which the last intersection with the horizontal axis occurs at $(m,0)$.
Hence
\[
\frac{1}{\ell}\DirProd{e_\ell}{\mathbf{J}_ke_p} = J_{\ell,p}(t) = \sum_{m=1}^\infty A_{\ell,m}(t)B_{m,p}(t) = \sum_{m=1}^\infty \frac{1}{\ell m}\DirProd{e_\ell}{\mathbf{A}_ke_m}\DirProd{e_m}{\mathbf{B}_ke_p} = \frac{1}{\ell} \DirProd{e_\ell}{\mathbf{A}_k\mathbf{B}_ke_p},
\]
implying that $\mathbf{J}_k = \mathbf{A}_k\mathbf{B}_k$. 
Since $\mathbf{A}_k$ is compact and $\mathbf{B}_k$ is bounded, their composition $\mathbf{J}_k$ is bounded and compact \cite[Theorem 1.15]{zhu_operator_2007}.
\end{proof}

\subsection{The operator $\mathbf{J}_k$} \label{sec:operatorJ}

We wish to enumerate the walks $w\in\mathcal{J}_{\ell,p}$, $p,\ell\geq 1$, that start at $(p,0)$ and end at $(0,\ell)$ while maintaining strictly positive first coordinate until the end.
By looking at both coordinates of the walks separately, we easily see that these walks are in bijection with pairs of simple walks of equal length on $\Z$, the first of which starts at $p$ and ends at $0$ while staying positive until the end, while the second starts at $0$ and ends at $\ell$ without further restrictions.
For fixed length $n$, such walks only exist if both $n-p$ and $n-\ell$ are non-negative even integers, in which case the ballot theorem tells us that the former walks are counted by $\frac{p}{n}\binom{n}{(n+p)/2}$ and the latter by $\binom{n}{(n+\ell)/2}$.
Therefore the generating function $J_{\ell,p}(t)$ is given explicitly by
\begin{equation}\label{eq:Jexpression}
J_{\ell,p}(t) = \sum_{n=1}^\infty \frac{p}{n}\binom{n}{(n+\ell)/2}\binom{n}{(n+p)/2} \one_{\{n-p\text{ and }n-\ell\text{ non-negative and even}\}}t^{n} .
\end{equation}
It is non-trivial only when $p+\ell$ is even, in which case it has radius of convergence equal to $1/4$.

For fixed $k=4t\in(0,1)$ we denote by $\psi_k : \disc \to \C$ the analytic function given by
\[
\psi_k(x) = \frac{1-\sqrt{1-k x^2}}{\sqrt{k}\, x},
\]
which maps the unit disk $\disc$ biholomorphically onto a strict subset $\psi_k(\disc)\subset \disc$, its inverse being given by $2/(\sqrt{k}(z+1/z))$ for $z\in\psi_k(\disc)$.
It induces a linear operator $\mathbf{R}_k$ on the Dirichlet space $\mathcal{D}$ of analytic functions $f$ on $\disc$ that vanish at the origin by setting $\mathbf{R}_k f \coloneqq f \circ \psi_k$.

\begin{lemma}\label{thm:RJoperator}
	The linear operator $\mathbf{R}_k$ is bounded and $\mathbf{J}_k = \mathbf{R}_k^\dagger\mathbf{R}_k$.
	In particular, $\mathbf{J}_k$ is self-adjoint and injective.
\end{lemma}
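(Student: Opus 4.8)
The plan is to realise $\mathbf{R}_k$ as a composition operator and exploit the fact, stated above, that $\psi_k$ maps $\disc$ biholomorphically into itself. First I would settle boundedness geometrically. Since $(\mathbf{R}_k f)'(z) = f'(\psi_k(z))\,\psi_k'(z)$ and $\psi_k(0)=f(0)=0$ (so that $\mathbf{R}_k f\in\Dir$), the change of variables $w=\psi_k(z)$ — whose real Jacobian is $|\psi_k'(z)|^2$ because $\psi_k$ is holomorphic and injective on $\disc$ — gives
\[
\langle \mathbf{R}_k f, \mathbf{R}_k g\rangle_\Dir = \int_\disc \overline{f'(\psi_k(z))}\,g'(\psi_k(z))\,|\psi_k'(z)|^2\,\rmd A(z) = \int_{\psi_k(\disc)}\overline{f'(w)}\,g'(w)\,\rmd A(w).
\]
Taking $f=g$ and using $\psi_k(\disc)\subset\disc$ yields $\|\mathbf{R}_k f\|_\Dir^2 \le \|f\|_\Dir^2$, so $\mathbf{R}_k$ is in fact a contraction, hence bounded; consequently $\mathbf{R}_k^\dagger\mathbf{R}_k$ is a well-defined bounded self-adjoint operator.

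It then remains to identify $\mathbf{R}_k^\dagger\mathbf{R}_k$ with $\mathbf{J}_k$. As both are bounded and $(e_p)_{p\ge1}$ spans a dense subspace, it suffices to match matrix elements: I would show that $\langle e_l, \mathbf{R}_k^\dagger\mathbf{R}_k e_p\rangle_\Dir = \langle \mathbf{R}_k e_l, \mathbf{R}_k e_p\rangle_\Dir = \langle \psi_k^l,\psi_k^p\rangle_\Dir$ equals $l\,J_{l,p}(t) = \langle e_l,\mathbf{J}_k e_p\rangle_\Dir$. Writing $a_{n,p} \coloneqq [x^n]\psi_k(x)^p$ (real coefficients, and finite norm since $\|\psi_k^p\|_\Dir=\|\mathbf{R}_k e_p\|_\Dir\le\|e_p\|_\Dir$), the Dirichlet pairing reads $\langle \psi_k^l,\psi_k^p\rangle_\Dir = \sum_{n\ge1} n\,a_{n,l}\,a_{n,p}$.

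The heart of the argument is computing $a_{n,p}$, and here the key observation is that $\psi_k$ has an explicit rational compositional inverse: solving $y=\psi_k(x)$ gives $x=\psi_k^{-1}(y)=\tfrac{2y}{\sqrt{k}(1+y^2)}$, i.e. $y=x\,H(y)$ with $H(y)=\tfrac{\sqrt{k}}{2}(1+y^2)$ and $H(0)\neq0$. Lagrange inversion then yields
\[
a_{n,p} = [x^n]\psi_k(x)^p = \frac{p}{n}\,[y^{n-p}]H(y)^n = \frac{p}{n}\,\frac{k^{n/2}}{2^n}\binom{n}{(n-p)/2},
\]
nonzero precisely when $n\ge p$ and $n-p$ is even, with $\binom{n}{(n-p)/2}=\binom{n}{(n+p)/2}$. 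Substituting and using $k^n/4^n=t^n$ gives
\[
\sum_{n\ge1} n\,a_{n,l}\,a_{n,p} = l\cdot p\sum_{n\ge1}\frac{1}{n}\binom{n}{(n+l)/2}\binom{n}{(n+p)/2}t^n = l\,J_{l,p}(t),
\]
which is exactly $\langle e_l,\mathbf{J}_k e_p\rangle_\Dir$ by definition. Hence $\mathbf{J}_k=\mathbf{R}_k^\dagger\mathbf{R}_k$, and self-adjointness is automatic for any operator of this form.

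The main obstacle I anticipate is the coefficient computation in the third step: one must recognise that the Joukowski-type inverse of $\psi_k$ makes Lagrange inversion applicable, and then carefully match the resulting single binomial against the product of two binomials in $J_{l,p}$. The splitting of the ``diagonal'' quantity $n\,a_{n,l}a_{n,p}$ into the two independent Ballot/return factors is precisely what mirrors the bijective decomposition of $\mathcal{J}_{l,p}$ into its two one-dimensional walks. Minor care is also needed to justify the change of variables (injectivity of $\psi_k$ and $\psi_k(0)=0$) and the passage from matrix elements to the operator identity via density and boundedness.
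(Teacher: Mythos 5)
Your proposal is correct and follows essentially the same route as the paper: boundedness via the conformal invariance of the Dirichlet integral together with $\psi_k(\disc)\subset\disc$, and the identification $\langle \mathbf{R}_k e_l,\mathbf{R}_k e_p\rangle_\Dir = l\,J_{l,p}(t)$ via Lagrange inversion applied to $\psi_k$ (your computation of $[x^n]\psi_k(x)^p$ through the inverse $y\mapsto 2y/(\sqrt{k}(1+y^2))$ reproduces the paper's coefficient $(k/4)^{n/2}\tfrac{p}{n}\binom{n}{(n+p)/2}$ exactly). The only difference is cosmetic — you spell out the Jacobian change of variables and the density argument for passing from matrix elements to the operator identity, which the paper leaves implicit.
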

\begin{proof}
Since the Dirichlet inner product is preserved under conformal mapping, in the sense that $\langle f \circ \psi, g\circ \psi\rangle_{\Dir(\disc)} = \langle f, g\rangle_{\Dir(\psi(\disc))}$ for any biholomorphic function $\psi$ on $\disc$ \cite[Section 2.1]{arcozzi_dirichlet_2011}, we have 
\[
\| \mathbf{R}_kf\|_{\Dir}^2 = \| f\circ \psi_k\|_{\Dir}^2 = \int_{\psi_k(\disc)} |f'(z)|^2 \rmd A(z) \leq \| f \|_{\Dir}^2,
\]
implying that $\mathbf{R}_k$ is bounded. 

For $n-p$ non-negative and even we may compute
\begin{equation}\label{eq:lagrangeinversion}
[x^n] \psi_k(x)^p = \left(\frac{k}{4}\right)^{n/2} [z^n] \left( \frac{1-\sqrt{1-4z^2}}{2z}\right)^p =  \left(\frac{k}{4}\right)^{n/2} \frac{p}{n} [u^{n-p}] \left(1+u^2\right)^n = \left(\frac{k}{4}\right)^{n/2} \frac{p}{n} \binom{n}{(n+p)/2},
\end{equation}
where in the second equality we applied Lagrange inversion to the Catalan series $(1-\sqrt{1-4z^2})/(2z)$.
Therefore 
\begin{align*}
\DirProd{e_\ell}{\mathbf{R}_k^\dagger\mathbf{R}_k e_p} &=\DirProd{\mathbf{R}_ke_\ell}{\mathbf{R}_k e_p} = \sum_{n=1}^\infty n\, \overline{[x^n]\psi_k(x)^\ell}\, [x^n]\psi_k(x)^p \\
&= \sum_{n=1}^\infty \left(\frac{k}{4}\right)^n n \, \frac{\ell}{n} \binom{n}{(n+\ell)/2} \frac{p}{n} \binom{n}{(n+p)/2}\one_{\{n-p\text{ and }n-\ell\text{ non-negative and even}\}}\\
	& = \ell \, J_{\ell,p}(t) = \DirProd{e_\ell}{\mathbf{J}_k e_p},
\end{align*}
which shows that $\mathbf{J}_k = \mathbf{R}_k^\dagger \mathbf{R}_k$.
It follows that $\mathbf{J}_k$ is self-adjoint and injective, since $\langle f, \mathbf{J}_k f\rangle_\Dir = \int_{\psi_k(\disc)} |f'(z)|^2 \rmd A(z)= 0$ iff $f$ is the zero function. 
\end{proof}

The injectivity and self-adjointness of $\mathbf{J}_k$ imply the following very useful property for the triple of operators $\mathbf{J}_k$, $\mathbf{A}_k$ and $\mathbf{B}_k$.

\begin{lemma}\label{thm:simdiag}
The operators $\mathbf{J}_k$, $\mathbf{A}_k$ and $\mathbf{B}_k$ are self-adjoint, commuting, and simultaneously diagonalizable.
\end{lemma}
\begin{proof}
It is clear from the definitions that $\mathbf{B}_k$ and $\mathbf{A}_k$ are self-adjoint, and according to Lemma \ref{thm:RJoperator} the same is true for $\mathbf{J}_k$.
The relation $\mathbf{J}_k = \mathbf{A}_k\mathbf{B}_k$ from Proposition \ref{thm:JAB} then implies that all three operators (mutually) commute.
Since both $\mathbf{J}_k$ and $\mathbf{A}_k$ are compact, they are simultaneously diagonalizable \cite[Corollary 3.2.10]{zimmer_essential_1990}, in the sense that there exists an orthonormal basis $(b_m)_{m=1}^\infty$ of $\Dir$ such that each $b_m$ is an eigenvector of both $\mathbf{J}_k$ and $\mathbf{A}_k$. 
According to Lemma \ref{thm:RJoperator}, $\mathbf{J}_k$ is injective and therefore $\mathbf{A}_k$ must be too.
Observe that, for any $m\geq 1$, $\mathbf{J}_k b_m = \mathbf{B}_k \mathbf{A}_k b_m$ with $\mathbf{A}_k b_m \neq 0$, and therefore $b_m$ is an eigenvector of $\mathbf{B}_k$ too.
\end{proof}

The remainder of the subsection is devoted to diagonalizing $\mathbf{J}_k$, while in Section \ref{sec:operatorBA} we will determine the corresponding eigenvalues for the remaining two operators $\mathbf{A}_k$ and $\mathbf{B}_k$.
Finally, in Section \ref{sec:proofmainresult} we will prove Theorem \ref{thm:mainresult} by taking suitable compositions of the operators $\mathbf{J}_k$, $\mathbf{B}_k$, and $\mathbf{A}_k$. 

In order to diagonalize $\mathbf{J}_k$ it suffices to find an orthogonal basis $(f_m)_{m=1}^\infty$ of $\Dir$ consisting of analytic functions that are also orthogonal with respect to the Dirichlet inner-product on $\psi_k(\disc)$,
\[\langle f, g \rangle_{\Dir(\psi_k(\disc))} \coloneqq  \int_{\psi_k(\disc)} \overline{f'(z)}\,g'(z)\, \rmd A(z). \]
Indeed, if $\DirProd{f_n}{f_m} = \langle f_n,f_m\rangle_{\Dir(\psi_k(\disc))} = 0$ for $m\neq n$ then 
\begin{equation*}
\DirProd{f_n}{\mathbf{J}_kf_m} = \DirProd{f_n}{\mathbf{R}_k^\dagger \mathbf{R}_kf_m} = \langle f_n, f_m \rangle_{\Dir(\psi_k(\disc))} = \frac{\langle f_m,f_m\rangle_{\Dir(\psi_k(\disc))}}{\langle f_m,f_m\rangle_{\Dir} } \DirProd{f_n}{f_m}\qquad\text{ for all }m,n\geq 1,
\end{equation*}
implying that $f_m$ is an eigenvector of $\mathbf{J}_k$ with eigenvalue $\langle f_m,f_m\rangle_{\Dir(\psi_k(\disc))}/\langle f_m,f_m\rangle_{\Dir}$.

To obtain such a basis we seek an injective holomorphic mapping that takes both $\disc$ and $\psi_k(\disc)$ to sufficiently simple domains. 
As we will see the elliptic integral $v_{k_1}(z)$ introduced in \eqref{eq:vkmap} does this job.
First we notice (using \eqref{eq:ellipticintegrals} in Appendix \ref{sec:ellipticappendix}) that $v_k(z)$ can be expressed in terms of the inverse function $\arcsn(\cdot, k)$ (in a suitable neighbourhood of the origin) of the Jacobi elliptic function $\sn(\cdot,k)$  with modulus $k$, 
\begin{equation}
v_k(z) = \frac{1}{4K(k)} \int_0^z \frac{\rmd x}{\sqrt{(k-x^2)(1-k x^2)}} = \frac{\arcsn\left(\frac{z}{\sqrt{k}}, k\right)}{4K(k)}.
\end{equation}
Denote by $z_{k}$ the analytic function 
\begin{equation}\label{eq:zkdef}
z_{k}(v) = \sqrt{k} \sn( 4 K(k) v, k),
\end{equation}
which will provide the inverse to $v_{k}$ on a suitable domain (see Lemma \ref{thm:mapping} below).
As depicted in Figure \ref{fig:vk1map} and proved in the next lemma, after the removal of two slits $v_{k_1}$ maps both $\disc$ and $\psi_k(\disc)$ to rectangles, with the same width but different heights. 
 
\begin{figure}[ht]
	\centering
	\includegraphics[width=.7\linewidth]{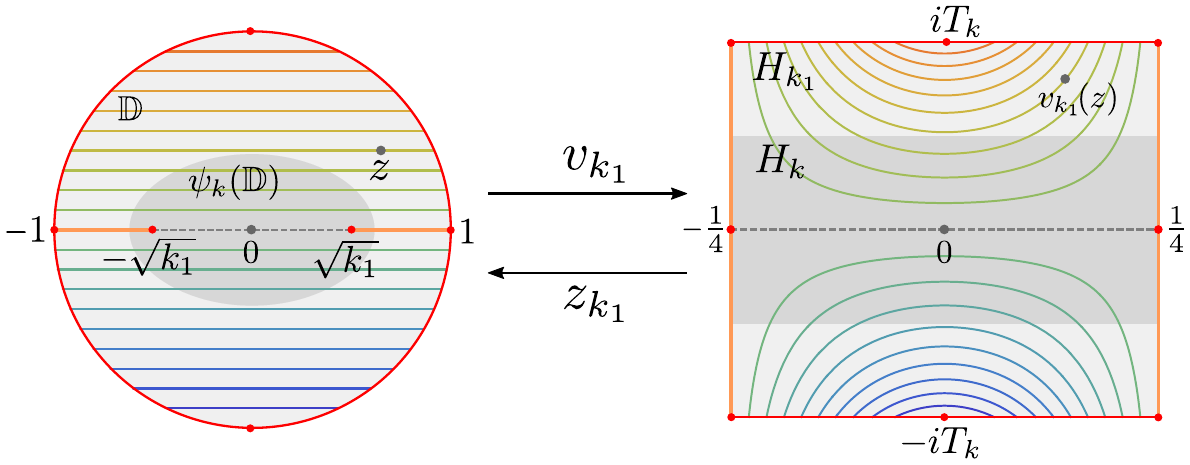}
	\caption{The analytic function $v_{k_1}$ maps the disk with two slits onto a rectangle of width $\frac12$ and height $2 T_k = K(k')/(2K(k))$. The shaded domains represent $\psi_k(\disc)$ and its image $H_{k}$ under $v_{k_1}$.\label{fig:vk1map}}
\end{figure}

\begin{lemma}\label{thm:mapping}
The analytic function $v_{k_1}$ maps $\disc\setminus \{z\in\R : z^2 \geq k_1\}$, respectively $\psi_k(\disc) \setminus \{z\in\R : z^2 \geq k_1\}$, biholomorphically onto the open rectangle $H_{k_1}\coloneqq (-1/4,1/4) + i ( -T_{k_1}/2, T_{k_1}/2 )$, respectively $H_{k}\coloneqq(-1/4,1/4) + i ( -T_k/2, T_k/2 )$, where $T_k = K(k')/(4K(k))$ satisfies $T_{k_1}=2 T_k$ (see \eqref{eq:landenk} in Appendix \ref{sec:ellipticappendix}).
The inverse mapping $z_{k_1}(v)$ extends to a surjective holomorphic map from the infinite strip $\R + i(-T_k,T_k)$ to $\disc$. 
Moreover, $z_{k_1}(-v) = z_{k_1}(v+1/2) = - z_{k_1}(v)$ and 
\begin{enumerate}[label = {(\roman*)}]
\item $z_{k_1}$ maps $[-\tfrac{1}{4}, \tfrac{1}{4}]$ bijectively to $ [-\sqrt{k_1},\sqrt{k_1}]$ with $z_{k_1}(0)=0$ and $z_{k_1}(\pm \tfrac{1}{4}) = \pm \sqrt{k_1}$;
\item $z_{k_1}$ maps $[-\tfrac{1}{4}, \tfrac{1}{4}]+ i T_k$ bijectively to $\{v\in \C : |v|=1,\, \im(v) \geq 0\}$ with $z_{k_1}(\pm\tfrac{1}{4} + i T_k) = \pm1$ and $z_{k_1}(i T_k) = i$;
\item $z_{k_1}$ maps $\pm\tfrac{1}{4} + i[0,T_k]$ bijectively to $\pm[\sqrt{k_1},1]$ with $z_{k_1}(\pm\tfrac{1}{4})=\pm \sqrt{k_1}$ and $z_{k_1}(\pm \tfrac{1}{4}+iT_k) = \pm 1$.
\end{enumerate}
\end{lemma}
\begin{proof}
It is well-known that $\sn(\cdot,k)$ maps the open rectangle $(-K(k),K(k)) + i (0, K(k'))$ biholomorphically onto the upper half plane (see e.g. \cite[\S 47]{akhiezer_elements_1990}) with the boundary $[-K(k),K(k)]$ mapped bijectively to $[-1,1]$ and the boundaries $\pm K(k) + i [0, K(k')]$ to $\pm [1,k]$.
Hence $z_{k_1}$ as defined in \eqref{eq:zkdef} maps $(-\tfrac{1}{4},\tfrac{1}{4}) + i (0,T_{k_1})$ biholomorphically onto the upper half plane with the boundary $[-\tfrac{1}{4},\tfrac{1}{4}]$ mapped bijectively to $[-\sqrt{k_1},\sqrt{k_1}]$ and the boundaries $\pm\tfrac{1}{4} + i [0,T_{k_1}]$ to $\pm[\sqrt{k_1},1/\sqrt{k_1}]$.
Moreover the pseudo-periodicity and oddness of $\sn(\cdot,k)$ (see Appendix \ref{sec:ellipticappendix}) imply that $z_{k_1}(-v) = z_{k_1}(v+1/2) = -z_{k_1}(v)$ as well as $z_{k_1}(0)=0$, showing property (i).

According to \eqref{eq:snshift} and \eqref{eq:kprime} we have the identity $\sn(u,k)\sn(u+iK(k'),k) = 1/k$, from which it follows by setting $u = x - i K(k')/2$ with $x\in\R$ that $|\sn(x + i K(k')/2,k)| = 1/\sqrt{k}$.
Hence, $|z_{k_1}(x + iT_k)| = 1$ and therefore $z_{k_1}$ must map $[-\tfrac{1}{4}, \tfrac{1}{4}]+ i T_k$ bijectively to $\{v\in \C : |v|=1,\, \im(v) \geq 0\}$.
The part of the rectangle $(-\tfrac{1}{4},\tfrac{1}{4}) + i (0,T_{k_1})$ lying below the line $[-\tfrac{1}{4}, \tfrac{1}{4}]+ i T_k$ is then mapped biholomorphically by $z_{k_1}$ to the open upper half disk. 
Together with property (i) and $z_{k_1}(-v) = -z_{k_1}(v)$ this shows that $z_{k_1}$ maps $H_{k_1}$ biholomorphically onto $\disc\setminus\{z\in\R : z^2\geq k_1\}$, providing the inverse of $v_{k_1}$ restricted to the latter set.
Thanks to the pseudo-periodicity $z_{k_1}(v + 1/2) = - z_{k_1}(v)$, the function $z_{k_1}$ maps the infinite strip $\R + i(-T_{k_1}/2,T_{k_1}/2)$ surjectively to $\disc$.
Furthermore, $\sn(u,k)\sn(u+iK(k'),k) = 1/k$ implies that $\sn(iK(k')/2,k)^2 = -1/k$ and therefore $z_{k_1}(iT_k) = i$, proving property (ii). 
Property (iii) is a direct consequence of the first two and the way $z_{k_1}$ maps the boundaries $\pm\tfrac{1}{4} + i [0,T_{k_1}]$.

It remains to show that $z_{k_1}$ maps $H_k$ to $\psi_k(\disc) \setminus \{z\in\R : z^2 \geq k_1\}$.
The descending Landen transformation \eqref{eq:landensn} relates the Jacobi elliptic functions $\sn(\cdot, k)$ and $\sn(\cdot, k_1)$ through
\[
\sn(u,k) = \frac{(1+k_1)\sn(u/(1+k_1),k_1)}{1+ k_1 \sn^2(u/(1+k_1),k_1)}. 
\]
From the arguments above we know that if $u \in (-K(k),K(k)) + i (-K(k')/2,K(k')/2)$ then $|\sn(u,k)| < 1/\sqrt{k}$, in which case we may invert the relation to
\begin{equation}\label{eq:psisnrelation}
\sqrt{k_1}\sn(u/(1+k_1),k_1) = \frac{1-\sqrt{1-k^2 \sn^2(u,k)}}{k\,\sn(u,k)} = \psi_k( \sqrt{k}\sn(u,k) ).
\end{equation}
The sign of the square root is determined by noticing, with the help of \eqref{eq:landenk}, that $\frac{u}{1+k_1} \in (-K(k_1),K(k_1) ) + i (-K'(k_1)/4,K'(k_1)/4)$ and therefore $|\sn(u/(1+k_1),k_1)| < 1/\sqrt{k_1}$ whereas $|\sn(u,k)| < 1/\sqrt{k}$.
Setting $u = 4K(k) v$ and using \eqref{eq:landenk} the relation \eqref{eq:psisnrelation} becomes $z_{k_1}(v) = \psi_k( z_k(v))$ valid for $v\in H_{k_1}$.
From our previous considerations, with $k_1$ replaced by $k$, we know that $z_k(H_k) = \disc \setminus \{z\in\R: z^2 \geq k\}$.
Since $\psi_k(\sqrt{k}) = \sqrt{k_1}$, we conclude that
\begin{equation*}
z_k(H_k) = \psi_k(\disc \setminus \{z\in\R: z^2 \geq k\}) = \psi_k(\disc) \setminus \{z\in\R: z^2 \geq k_1\},
\end{equation*}
which finishes the proof.
\end{proof}

Let $\mathcal{H}_{k_1}$ be the Hilbert space of analytic functions $g$ on the infinite strip $\R + i(-T_k,T_k)$ that satisfy $g(0)=0$ and $g(v+1) = g(v) = g(1/2-v)$ and that have finite norm with respect to the inner product defined via an integral over the rectangle $H_{k_1}$,
\begin{equation}
\langle g,h\rangle_{\mathcal{H}_{k_1}} = \int_{H_{k_1}} \overline{g'(v)}\, h'(v) \rmd A(v). 
\end{equation}

\begin{lemma}\label{thm:isomorphism}
The pullback $f \mapsto f\circ z_{k_1}$ determines an isomorphism $\Dir \to \mathcal{H}_{k_1}$ of Hilbert spaces.
\end{lemma}
\begin{proof}
According to Lemma \ref{thm:mapping}, $z_{k_1}$ maps the infinite strip $\R + i(-T_k,T_k)$ surjectively to the disk $\disc$ and satisfies $z_{k_1}(v+1) = z_{k_1}(v)= z_{k_1}(1/2-v)$.
Any function $f\in\Dir$ is thus mapped to an analytic function $g = f \circ z_{k_1}$ on the strip $\R + i(-T_k,T_k)$ satisfying $g(0)=0$ and $g(v+1) = g(v) = g(1/2-v)$.
Moreover, using the conformal invariance of the Dirichlet inner product, we have that
\begin{equation*}
\|f\|^2_\Dir = \int_{\disc} |f'(z)|^2 \rmd A(z) = \int_{\disc\setminus\{z\in\R:z^2 > k_1\}} |f'(z)|^2 \rmd A(z) = \int_{H_{k_1}} |g'(v)|^2 \rmd A(v) = \| g\|^2_{\mathcal{H}_{k_1}}.
\end{equation*}
Hence $g = f\circ z_{k_1} \in \mathcal{H}_{k_1}$, and more generally $\langle f_1\circ z_{k_1}, f_2 \circ z_{k_1}\rangle_{\mathcal{H}_{k_1}} = \DirProd{f_1}{f_2}$ for $f_1,f_2\in\Dir$.

Finally we need to show that $f\mapsto f\circ z_{k_1}$ is surjective.
Observe that $g \in \mathcal{H}_{k_1}$ is determined (via its periodicities) by its values in $H_{k_1}$.
Therefore $g \circ v_{k_1}$ determines an analytic function in $\disc \setminus \{z\in\R:z^2 > k_1\}$.
Pairs of points just above and below the slits correspond (under the mapping $v_{k_1}$) to complex conjugate pairs of points of real part $\pm 1/4$ (see Lemma \ref{thm:mapping}(iii)). 
Since $g( 1/4 + i y) = g(1/4 - i y)$ and $g( -1/4 + i y) = g(-1/4 - i y)$ for $y\in (0,T_k)$, the function $g \circ v_{k_1}$ is continuous across the slits and therefore extends to an analytic function $f\in \Dir$, which clearly satisfies $f \circ z_{k_1} = g$.
\end{proof}	

Since $\mathcal{H}_{k_1}$ deals with periodic functions, it is natural to seek an orthogonal basis of $\mathcal{H}_{k_1}$ consisting of trigonometric functions.

\begin{lemma}\label{thm:fourierbasis}
	The sequence $(g_m)_{m=1}^\infty$ defined by $g_m(v) = \cos(2\pi m(v+1/4))-\cos(\pi m/2)$ is an orthogonal basis of $\mathcal{H}_{k_1}$ and 
	\begin{equation}
	\| g_m \|_{\mathcal{H}_{k_1}}^2 = \frac{m}{4} (q_k^{-m}-q_k^m).
	\end{equation}
\end{lemma}
\begin{proof}
	It should be clear from the definition that $g_m \in \mathcal{H}_{k_1}$ for any $m\geq 1$. 
	Computing the inner product of $g_m$ and $g_n$ with the help of Stokes' theorem, we find
	\begin{align*}
	\langle g_m, g_n \rangle_{\mathcal{H}_{k_1}} & = \int_{H_{k_1}} \overline{g_m'(v)}\,g_n'(v)\rmd A(v) = \frac{1}{2\pi i}\int_{\partial H_{k_1}} \overline{g_m(v)}\,g_n'(v)\rmd z
	\end{align*}
	where we used that $\rmd A(v)=\rmd A(x+iy)$ is the measure associated to the $2$-form $\frac{1}{\pi}\rmd x \wedge \rmd y = \frac{1}{2\pi i} \rmd \bar{v} \wedge\rmd v$ and the second integral traces the boundary $\partial H_{k_1}$ in counterclockwise direction.
	With the help of the periodicity of the functions $g_m$ and $g_n$ this reduces to
	\begin{align*}
	\langle g_m, g_n \rangle_{\mathcal{H}_{k_1}} &= \frac{1}{2\pi i} \int_{-1/4}^{1/4} (g_m(x+i T_k)g_n'(x-iT_k) - g_m(x-i T_k)g_n'(x+iT_k)) \rmd x\\
	&= \frac{1}{2\pi i} \int_{-1/2}^{1/2} g_m(x+i T_k)g_n'(x-iT_k)  \rmd x\\
	&= n\,i \int_{-1/2}^{1/2} \cos(2\pi m(x+i T_k +\tfrac{1}{4}))\sin(2\pi n(x-i T_k +\tfrac{1}{4})).
	\end{align*}
	This integral vanishes for $m\neq n$, while for	$m=n$ the integral is straightforwardly evaluated to
	\begin{equation}
	\langle g_m,g_m \rangle_{\mathcal{H}_{k_1}} = \frac{m}{2} \sinh( 4 \pi m T_k) = \frac{m}{4} (q_k^{-m}-q_k^m).
	\end{equation}
	It remains to show that the linear span of $(g_m)_{m=1}^\infty$ is dense in $\mathcal{H}_{k_1}$.
	To this end observe that any analytic function $g\in \mathcal{H}_{k_1}$ admits a Fourier series representation
	\begin{equation}
	g(v) = \sum_{m=-\infty}^\infty a_m e^{2\pi m i (v+1/4)},
	\end{equation}
	that is absolutely convergent on the strip $\R + i (-T_k,T_k)$ (and the same is true for its $v$-derivative).
	The identities $g(v)=g(1/2-v)$ and $g(0)=0$ imply that $a_{-m} = a_m$ and $a_0 + 2\sum_{m=1}^\infty a_m \cos(\pi m/2) = 0$.
	Hence we have the absolutely convergent sum
	\begin{equation*}
	g(v) = a_0 + 2\sum_{m=1}^\infty a_m \cos(2\pi m(v+1/4)) = \sum_{m=1}^\infty 2 a_m\, g_m(v).
	\end{equation*}
	From here one may check that $\langle g_m, g \rangle_{\mathcal{H}_{k_1}} = 2 a_m \|g_m\|^2_{\mathcal{H}_{k_1}}$ and therefore if $g\neq 0$ then $\langle g_m, g \rangle_{\mathcal{H}_{k_1}} \neq 0$ for some $m\geq 1$, implying $(g_m)_{m=1}^\infty$ is dense.
\end{proof}

Transferring the basis $(g_m)_{m=1}^\infty$ of $\mathcal{H}_{k_1}$ to $\mathcal{D}$ using the isomorphism of Lemma \ref{thm:isomorphism} gives precisely the basis $(f_m)_{m=1}^\infty$ announced in \eqref{eq:fmbasis} in the introduction.
In the following it will be useful to know that the functions $f_m$ can be analytically continued beyond the unit disc $\disc$:

\begin{lemma}\label{thm:fmradius}
For each $m\geq 1$, $f_m(z) = g_m \circ v_{k_1}(z)$ has radius of convergence equal to $1/\sqrt{k_1}$.
\end{lemma}
\begin{proof}
	From the initial remarks in the proof of Lemma \ref{thm:mapping} it follows that $z_{k_1}$ maps $(-1/4,1/4) + i (-2T_k,2 T_k)$ biholomorphically to the double-slit plane $\C \setminus \{ z\in \R : z^2 \geq \sqrt{k_1}\}$.
	Since $g_m$ is analytic on $\C$, the function $f_m$ can be analytically continued to the double-slit plane.
	It was also shown in Lemma \ref{thm:mapping} that the boundary segments $1/4 \pm i[0,2T_k]$ of the rectangle are both mapped by $z_{k_1}$ to the slit $[\sqrt{k_1},1/\sqrt{k_1}]$.
	Since $g_m(1/4+ i y)=g_m(1/4 - i y)$ for $y\in\R$, we find that $f_m$ is continuous across the slit $[\sqrt{k_1},1/\sqrt{k_1}]$, and similar arguments apply to the other slit $[-1/\sqrt{k_1},-\sqrt{k_1}]$.
	Hence, $f_m$ can be analytically continued to $\C \setminus \{ z\in \R : z^2 \geq 1/\sqrt{k_1}\}$.
	To finish the proof we observe that $f_m(z)$ necessarily has a branch cut at $z=1/\sqrt{k_1}$ because $z_{k_1}(v+2iT_k)=1/z_{k_1}(v)$ and $z'_{k_1}(1/4) = 0$ imply that $z_{k_1}'(1/4 + 2i T_k) = 0$ while $g_m'(1/4+2i T_k) \neq 0$. 
\end{proof}

We are now ready to perform the diagonalization of $\mathbf{J}_k$.

\begin{proposition}\label{thm:Jdiagonalization}
The family $(f_m)_{m=1}^\infty$ forms an orthogonal basis of $\Dir$ consisting of eigenvectors of $\mathbf{J}_k$ satisfying
\[
\mathbf{J}_k f_m = \frac{1}{q_k^{m/2} + q_k^{-m/2}} \,f_m, \qquad \|f_m\|^2_\Dir = \frac{m}{4}\left(q_k^{-m}-q_k^{m}\right).
\]
\end{proposition}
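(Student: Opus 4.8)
The plan is to transport the whole problem to the strip picture set up just above, where everything reduces to a one-dimensional Fourier computation. Writing $g = f\circ v_{k_1}^{-1}$ for the image of $f\in\Dir$ under the isomorphism $\Dir\cong\mathcal{H}_k$, conformal invariance of the Dirichlet integral together with Lemma \ref{lem:mapping} gives two representations of the relevant inner products. The ambient one,
\[
\DirProd{f_1}{f_2} = \int_{H_{k_1}}\overline{g_1'(v)}\,g_2'(v)\,\rmd A(v),
\]
is an integral over the tall rectangle of height $T_{k_1}=2T_k$; and since $\mathbf{J}_k=\mathbf{R}_k^\dagger\mathbf{R}_k$ with $\mathbf{R}_k f = f\circ\psi_k$ and $v_{k_1}(\psi_k(\disc))=H_k$, the ``squeezed'' one reads
\[
\DirProd{f_1}{\mathbf{J}_k f_2}=\DirProd{\mathbf{R}_kf_1}{\mathbf{R}_kf_2}=\int_{\psi_k(\disc)}\overline{f_1'}\,f_2'\,\rmd A=\int_{H_k}\overline{g_1'(v)}\,g_2'(v)\,\rmd A(v),
\]
an integral over the short rectangle of height $T_k$ of the \emph{same} function $g$. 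Thus both the norm and the action of $\mathbf{J}_k$ are encoded in one integral $\int_R\overline{g_l'}\,g_m'\,\rmd A$ over a rectangle $R=(-\tfrac14,\tfrac14)\times(-\tfrac T2,\tfrac T2)$, to be evaluated at the two heights $T=2T_k$ and $T=T_k$.

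The heart of the matter is therefore to compute this rectangle integral for $g_m(v)=\cos(2\pi m(v+\tfrac14))-\cos(\tfrac{\pi m}{2})$. First I would check that $g_m\in\mathcal{H}_k$, i.e. that it is $1$-periodic, satisfies $g_m(\tfrac12-v)=g_m(v)$, and vanishes at $v=0$ (the role of the constant $\cos(\tfrac{\pi m}{2})$). Since $g_m'(v)=-2\pi m\sin(2\pi m(v+\tfrac14))$, writing $v=\tfrac14+\alpha+i\beta$ gives $|g_m'(v)|^2=4\pi^2m^2\big(\sin^2(2\pi m\alpha)+\sinh^2(2\pi m\beta)\big)$, whose integral over $R$ is $\tfrac{m}{2}\sinh(2\pi m T)$. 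For the off-diagonal terms $l\neq m$ one instead expands $g_l',g_m'$ into the four exponentials $e^{\pm2\pi i l v},e^{\pm2\pi i m v}$; the $\alpha$- and $\beta$-integrations over $R$ leave every surviving coefficient multiplied by $\cos(\tfrac{\pi N}{2})\sin(\tfrac{\pi N}{2})=\tfrac12\sin(\pi N)=0$ for the integers $N=l\pm m$, so that
\[
\int_R\overline{g_l'(v)}\,g_m'(v)\,\rmd A(v)=\delta_{lm}\,\frac{m}{2}\sinh(2\pi m T).
\]
This single identity does all the quantitative work.

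Specialising the height finishes the eigenvalue and norm claims. At $T=2T_k$ the identity yields orthogonality in $\Dir$ and, using $q_k=e^{-4\pi T_k}$ so that $q_k^{-m}-q_k^m=2\sinh(4\pi m T_k)$, the norm $\|f_m\|_\Dir^2=\tfrac{m}{2}\sinh(4\pi m T_k)=\tfrac{m}{4}(q_k^{-m}-q_k^m)$. At $T=T_k$ it gives $\DirProd{f_l}{\mathbf{J}_k f_m}=\delta_{lm}\,\tfrac{m}{2}\sinh(2\pi m T_k)$, so dividing by the norm produces the eigenvalue
\[
\frac{\tfrac{m}{2}\sinh(2\pi m T_k)}{\tfrac{m}{2}\sinh(4\pi m T_k)}=\frac{1}{2\cosh(2\pi m T_k)}=\frac{1}{q_k^{m/2}+q_k^{-m/2}}.
\]
Since $\DirProd{f_l}{\mathbf{J}_k f_m}$ is diagonal against the orthogonal family $(f_m)$, the eigenvalue equation $\mathbf{J}_k f_m=(q_k^{m/2}+q_k^{-m/2})^{-1}f_m$ follows.

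It remains to establish that $(f_m)$ is a \emph{complete} orthogonal system and that $\mathbf{J}_k$ is compact, and this is where I expect the only genuine subtlety to lie. Completeness I would obtain from Fourier analysis on the strip: any $g\in\mathcal{H}_k$ has a Fourier series $\sum_n c_n e^{2\pi i n v}$, the reflection symmetry forces $c_{-n}=(-1)^n c_n$ and $g(0)=0$ forces $\sum_n c_n=0$, and this is exactly the span of the $g_m$; convergence in the Dirichlet norm is controlled by the orthogonal weights $\tfrac{m}{2}\sinh(4\pi m T_k)$ already computed, giving square-summability of the coefficients. Hence $(f_m)$ spans $\Dir$. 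Compactness is then immediate, because $\mathbf{J}_k$ is diagonal in an orthonormal basis with eigenvalues $(q_k^{m/2}+q_k^{-m/2})^{-1}\to0$ as $m\to\infty$ (using $q_k\in(0,1)$), making it a norm limit of finite-rank operators. The main care needed is in the completeness step, namely justifying that the formal Fourier expansion of an arbitrary $g\in\mathcal{H}_k$ converges in the Dirichlet norm to an honest series in the $f_m$, rather than merely pointwise on the strip.
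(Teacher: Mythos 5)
Your proof is correct and follows essentially the same route as the paper: transport everything to the rectangles $H_{k_1}$ and $H_k$ via $v_{k_1}$, compute the single rectangle integral $\int_R\overline{g_l'}\,g_m'\,\rmd A=\delta_{lm}\tfrac m2\sinh(2\pi mT)$ at the two heights $T=T_{k_1}=2T_k$ and $T=T_k$, and take the ratio to read off the eigenvalue. The only difference is one of detail: you carry out explicitly the orthogonality computation and the Fourier-completeness argument that the paper merely asserts (``an explicit computation shows'' and ``clearly \dots forms a basis of $\mathcal{H}_k$'').
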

\begin{proof}
	The functions $(g_m)_{m=1}^\infty$ of Lemma \ref{thm:fourierbasis} satisfy $g_m = f_m \circ z_{k_1}$.
	Therefore Lemma \ref{thm:isomorphism} and Lemma \ref{thm:fourierbasis} together imply that $(f_m)_{m=1}^\infty$ is an orthogonal basis of $\Dir$ and that 
	\begin{equation*}
	\langle f_m,f_n\rangle_{\Dir} = \langle g_m,g_n\rangle_{\mathcal{H}_{k_1}} = \frac{1}{2}m \sinh( 2 m \pi T_{k_1} )\one_{\{m=n\}} = \frac{m}{4}\left(q_k^{-m}-q_k^{m}\right)\one_{\{m=n\}}.
	\end{equation*}
By Lemma \ref{thm:mapping}, $\langle f_m,f_n\rangle_{\Dir(\psi_k(\disc))} = \langle g_m,g_n\rangle_{\mathcal{H}_{k}}$, so by replacing $k_1 \to k$ in the previous expression we find
\[
\langle f_m,\mathbf{J}_kf_n\rangle_{\Dir} = \langle f_m,f_n\rangle_{\Dir(\psi_k(\disc))} = \frac{1}{2}m \sinh( 2 m \pi T_{k} )\one_{\{m=n\}}. 
\]
Hence $f_m$ is an eigenvector of $\mathbf{J}_k$ with eigenvalue given by
\[
\frac{\langle f_m,\mathbf{J}_kf_m\rangle_{\Dir}}{\langle f_m,f_m\rangle_{\Dir}}=\frac{\sinh(2m\pi T_{k})}{\sinh(4m\pi T_k)} = \frac{1}{2 \cosh( 2m\pi T_k)} = \frac{1}{q_k^{m/2} + q_k^{-m/2}}, 
\]
where we made use of \eqref{eq:landenk} and \eqref{eq:nomedef}.
\end{proof}

Notice that this verifies Theorem \ref{thm:mainresult}(ii) for $\alpha=\pi/2$ and $I=(-\pi/2,\pi/2)$.

\subsection{Analytic structure of the eigenvectors}\label{sec:analytic}

\begin{proposition}\label{thm:fhatanalytic}
For any $p,m\geq 1$, the coefficient $[z^p]\hat{f}_m(z) = \frac{1}{p}\DirProd{e_p}{\hat{f}_m}$ of the normalized eigenvector $\hat{f}_m = f_m / \|f_m\|_{\Dir}$ is analytic in $k$ around $0$.
\end{proposition}
\begin{proof}
From the definition \eqref{eq:vkmap} of $v_k$ and the fact that $K(k) = \frac{\pi}{2} + \mathcal{O}(k^2)$ is analytic in $k$ at $0$, it follows that $[z^p]v_k(\sqrt{k}\,z)$ is analytic in $k$ at $k=0$ for any $p\geq 1$. 
Since $k_1 = \frac{1}{4}k^2 + \mathcal{O}(k^4)$, the same is true for $[z^p]v_{k_1}(\sqrt{k_1}\,z)$ as well as $[z^p]f_m(\sqrt{k_1}z) = [z^p]g_m(v_{k_1}(\sqrt{k_1}\,z))$ for any $p,m\geq 1$.
Since $\sqrt{k_1}$ is analytic in $k$ around $k=0$, $[z^p]f_m(z)$ has a Laurent series expansion around $k=0$ with a pole of order at most $p$ at $k=0$. 
The inverse norm $1/\|f_m\|_{\Dir} = 1/\sqrt{\frac{m}{4}(q_k^{-m}-q_k^m)}$ as computed in Proposition \ref{thm:Jdiagonalization} is analytic in $k$ for any $m$ (see \eqref{eq:nomeexpansion}), implying that $[z^p]\hat{f}_m(z)$ has a Laurent series expansion around $k=0$ too. 
Recalling from Lemma \ref{thm:fmradius} that $f_m(z)$ has radius of convergence larger than $1$, we deduce from Cauchy's inequality and Lemma \ref{thm:fhatbound} below that
\begin{equation*}
\left| [z^p] \hat{f}_m(z)\right| \leq \sup_{|z|=1} |\hat{f}_m(z)| \leq 2\qquad \text{for any }p,m\geq 1\text{ and }k\in(0,1/2).
\end{equation*}
We conclude that $[z^p]\hat{f}_m(z)$ has no pole at $k=0$, thus finishing the proof.
\end{proof}

\begin{lemma}\label{thm:fhatbound}
We have the bound $|\hat{f}_m(z)| < 2$ for all $k\in(0,1/2)$, $|z|\leq 1$ and $m\geq 1$.
\end{lemma}
\begin{proof}
The function $g_m(v) = \cos(2\pi m(v+1/4))-\cos(\pi m/2)$ of Lemma \ref{thm:fourierbasis} evaluated at $v= x+i y$ satisfies
\begin{align*}
|g_m(v)|^2 &= \left(\cos(\tfrac{1}{2}m \pi)-\cos(2m \pi(x+1/4))\cosh(2m\pi y)\right)^2 + \sin^2(2m \pi(x+1/4))\sinh^2(2m\pi y) \\
&= \cos^2(\tfrac{1}{2}m \pi) -2\cos(\tfrac{1}{2}m \pi)\cos(2m\pi(x+1/4)) \cosh(2m\pi y)\\
&\quad + \frac{1}{2} \cos(4m\pi(x+1/4)) + \frac{1}{2} \cosh(4m\pi y) \\
&\leq \frac{3}{2} + 2 \cosh(2m\pi y) + \frac{1}{2} \cosh(4m\pi y) = 4 \cosh^4(m\pi y).
\end{align*}
For $v\in H_{k_1} = (-1/4,1/4)+i(-T_k,T_k)$ we therefore have
\begin{equation}
|g_m(v)| \leq 2\cosh^2(\pi m T_k) = \frac{1}{2}(q_k^{m/4}+q_k^{-m/4})^2,
\end{equation} 
which translates into the bound
\begin{equation*}
|f_m(z)| \leq \frac{1}{2}(q_k^{m/4}+q_k^{-m/4})^2\qquad\text{for all }|z|\leq 1\text{, }m\geq 1\text{ and }k\in(0,1).
\end{equation*}
According to \eqref{eq:nomebound2}, we have $q_k \in (0,\tfrac{1}{20})$ when $k\in (0,1/2)$ and therefore $\frac{1}{2}(q_k^{m/4}+q_k^{-m/4})^2 \leq \sqrt{q_k^{-m}-q_k^m}$.
The claim then follows from $|\hat{f}_m(z)| = |f_m(z)| / \sqrt{\frac{m}{4}(q_k^{-m}-q_k^m)}$.
\end{proof}

It follows from the proof of Proposition \ref{thm:fhatanalytic} that one can compute the series expansion of $\DirProd{e_p}{\hat{f}_m} = p[z^p]\hat{f}_m(z)$ to arbitrary order in $k$, given the series expansions of $v_{k_1}(\sqrt{k_1} z)$ (which follows from the definition \eqref{eq:vkmap}), $K(k)$ and $q_k$ (which are described in Appendix \ref{sec:ellipticappendix}). 
The table below shows the first few terms of $\frac{1}{\sqrt{m}}\DirProd{e_p}{\hat{f}_m}$ in $t=k/4$ for $p\leq 3$ and $m\leq 6$.
\begin{equation*}
\begin{array}{c|cccc}
\tfrac{1}{\sqrt{m}}\DirProd{e_p}{\hat{f}_m} & p=1 & p=2 & p=3 \\\hline
m=1& -1+\frac{3 t^4}{2}+24 t^6+\frac{2765 t^8}{8}+\cdots & 0 & -3 t^2-24 t^4-\frac{477 t^6}{2}-2652 t^8+\cdots \\
m=2 & 0 &\mkern-55mu 1-4 t^4-64 t^6-\frac{1835 t^8}{2}+\cdots \mkern-55mu& 0 \\
m=3 & t^2+8 t^4+82 t^6+944 t^8+\cdots & 0 & -1+9 t^4+144 t^6+2052 t^8+\cdots \\
m=4 & 0 &\mkern-55mu -2 t^2-16 t^4-160 t^6-1792 t^8+\cdots \mkern-55mu& 0 \\
m=5 & -t^4-16 t^6-230 t^8+\cdots & 0 & 3 t^2+24 t^4+231 t^6+2472 t^8+\cdots \\
m=6 & 0 &\mkern-55mu 3 t^4+48 t^6+684 t^8+\cdots \mkern-55mu& 0 \\
\end{array}
\end{equation*}

\subsection{The operators $\mathbf{B}_k$ and $\mathbf{A}_k$} \label{sec:operatorBA}

\begin{figure}
		\centering
		\includegraphics[width=.35\linewidth]{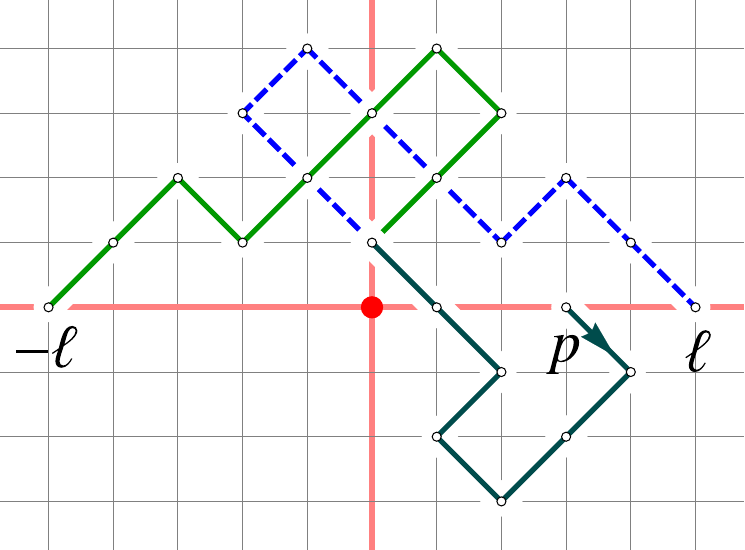}	
	\caption{The reflection principle in the vertical axis yields a bijection between walks from $(p,0)$ to $(-\ell,0)$ and walks $(p,0)$ to $(\ell,0)$ that visit the vertical axis at least once. }\label{fig:reflectionb}
\end{figure}

Recall that $B_{\ell,p}(t) = \DirProd{e_\ell}{\mathbf{B}_ke_p}$ is the generating function for the set $\mathcal{B}_{\ell,p}$ of simple diagonal walks from $(p,0)$ to $(\ell,0)$ that maintain strictly positive first coordinate.
A simple reflection principle (see Figure \ref{fig:reflectionb}) teaches us that $B_{\ell,p}(t)$ is given by
\begin{equation}\label{eq:Breflection}
B_{\ell,p}(t) = B_{\ell-p}(t) - B_{-\ell-p}(t),
\end{equation}
where $B_m(t)$, $m\in\Z$, is the generating function of simple diagonal walks from $(0,0)$ to $(m,0)$ without further restrictions.

\begin{lemma}\label{thm:bilateral}
	For fixed $t\in(0,1/4)$, $B_m(t)$ can be expressed as a contour integral as
	\begin{equation}\label{eq:bilateral}
	B_m(t) = \frac{1}{2\pi i} \int_{\gamma} \frac{\rmd z}{z^{m+1}\sqrt{1-4t^2(z+1/z)^2}}, 
	\end{equation}
	where $\gamma$ traces the unit circle in counterclockwise direction.
\end{lemma}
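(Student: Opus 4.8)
The plan is to first turn $B_m(t)$ into an explicit binomial sum by exploiting the product structure of diagonal walks, and then to recognise that sum as the coefficient extracted by the stated contour integral. Since a diagonal step alters the two coordinates by $\pm1$ independently, a length-$n$ walk from $(0,0)$ to $(m,0)$ is exactly a pair consisting of a length-$n$ simple walk on $\Z$ from $0$ to $m$ and an independent length-$n$ simple walk on $\Z$ from $0$ back to $0$. Counting each factor by a binomial coefficient, as was already done for $\mathcal{J}_{l,p}$, gives
\[
B_m(t) = \sum_{n\ge 0} \binom{n}{(n+m)/2}\binom{n}{n/2}\, t^n,
\]
with the convention that a binomial vanishes unless its lower entry is a non-negative integer (so only even $n\ge|m|$ contribute, and $B_m(t)\equiv 0$ for odd $m$).

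Next I would read the two binomials as coefficient extractions in two auxiliary variables. Weighting each of the four steps by $t\,z^{\pm1}w^{\pm1}$ to track the horizontal and vertical displacements, a single step contributes $t(z+z^{-1})(w+w^{-1})$, so that
\[
B_m(t) = [z^m w^0] \sum_{n\ge 0} t^n (z+z^{-1})^n (w+w^{-1})^n = [z^m w^0]\,\frac{1}{1 - t(z+z^{-1})(w+w^{-1})}.
\]
The geometric series is legitimate because for $t\in(0,1/4)$ and $|z|=|w|=1$ one has $|t(z+z^{-1})(w+w^{-1})|\le 4t<1$, so the sum converges absolutely and uniformly on the torus $\{|z|=|w|=1\}$ and may be interchanged with the coefficient extraction.

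I would then carry out the $w$-extraction by residues. Writing $a\coloneqq t(z+z^{-1})$ and viewing $[w^0]$ as $\tfrac{1}{2\pi i}\oint \tfrac{\rmd w}{w}$ over the unit circle, the integrand
\[
\frac{1}{w\bigl(1 - a(w+w^{-1})\bigr)} = \frac{-1}{a\,(w-w_+)(w-w_-)}, \qquad w_\pm = \frac{1\pm\sqrt{1-4a^2}}{2a},
\]
has poles at the two roots of $a w^2 - w + a$, whose product equals $1$; hence exactly one of them, $w_-$ (with $w_-\approx a$ as $a\to0$), lies inside $\gamma$. Evaluating the residue at $w_-$ yields $[w^0]\bigl(1-a(w+w^{-1})\bigr)^{-1} = (1-4a^2)^{-1/2}$, and therefore
\[
B_m(t) = [z^m]\,\frac{1}{\sqrt{1 - 4t^2(z+z^{-1})^2}} = \frac{1}{2\pi i}\int_{\gamma} \frac{\rmd z}{z^{m+1}\sqrt{1 - 4t^2(z+z^{-1})^2}},
\]
which is the assertion. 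Here the restriction $t\in(0,1/4)$ guarantees that the branch points, occurring where $z+z^{-1}=\pm 1/(2t)$ with $|1/(2t)|>2$, lie strictly off the unit circle, so the integrand is analytic on a neighbourhood of $\gamma$ and the principal branch of the square root (equal to $1$ at $t=0$) is the relevant one.

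I expect the only genuinely delicate point to be the bookkeeping in the $w$-residue: confirming that precisely one pole sits inside $\gamma$ and that the residue reproduces the principal branch $(1-4a^2)^{-1/2}$, consistent with the $t\to0$ expansion $B_m(t)=\Oh(t^{|m|})$. The binomial identity and the convergence estimates are routine.
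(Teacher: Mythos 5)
Your proof is correct and follows essentially the same route as the paper: decompose the diagonal walk into two independent simple walks on $\Z$, obtain the product of binomial coefficients, and convert the resulting sum into the stated contour integral over the unit circle. The only difference is cosmetic: where the paper quotes the central binomial generating function $\sum_{n}\binom{2n}{n}u^{n}=(1-4u)^{-1/2}$ to produce the square root, you rederive it as the residue computation $[w^0]\bigl(1-a(w+w^{-1})\bigr)^{-1}=(1-4a^2)^{-1/2}$ inside a two-variable kernel, which is equally sound.
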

\begin{proof}
	The contribution of walks from $(0,0)$ to $(m,0)$ of length $2n$ and $m$ even is
	\[
	  \binom{2n}{n} \binom{2n}{n+m/2} t^{2n}= \frac{1}{2\pi i} \int_{\gamma} \frac{\rmd z}{z^{m+1}} \binom{2n}{n}(z+1/z)^{2n}t^{2n}.
	\]
	The result then follows from summing over $n\geq 0$ and relying on absolute convergence to interchange the summation and integral.
\end{proof}

Based on the similarity between the integrand of \eqref{eq:bilateral} and the one in the definition \eqref{eq:vkmap} of $v_{k_1}$, we find the following useful representation of $\mathbf{B}_k$.

\begin{lemma}\label{thm:Boperatorintegral}
	If $f,g\in \Dir$ are analytic in a neighbourhood of the closed unit disk, then
	\begin{equation}\label{eq:Boperatorexpr}
	\langle f,\mathbf{B}_k g\rangle_\Dir = \frac{2K(k)}{\pi} \int_{\gamma'} (f(z)-f(z^{-1}))(g(z)-g(z^{-1}))v_{k_1}'(z)\rmd z,
	\end{equation}
	where $\gamma'$ traces the upper half of the unit circle starting at $1$ and ending at $-1$.
\end{lemma}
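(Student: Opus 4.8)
The plan is to verify the identity on the orthogonal basis and then reduce everything to elementary trigonometric integrals. By conjugate-linearity in $f$, linearity in $g$, and continuity of both sides, it suffices to establish \eqref{eq:Boperatorexpr} for $f=e_l$ and $g=e_p$ with $l,p\geq 1$; since these have real Taylor coefficients the complex conjugation on the left is invisible and the left-hand side is simply $\DirProd{e_l}{\mathbf{B}_k e_p}=B_{l,p}(t)$. The reflection formula \eqref{eq:Breflection} writes this as $B_{l,p}(t)=B_{l-p}(t)-B_{-l-p}(t)$, and since walks to $(m,0)$ and $(-m,0)$ are equinumerous we have $B_{-m}(t)=B_m(t)$, so the target expression is $B_{l-p}(t)-B_{l+p}(t)$.

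Next I would turn Lemma \ref{thm:bilateral} into a real integral over the upper semicircle. Parametrising $\gamma$ by $z=e^{i\phi}$ gives $z+1/z=2\cos\phi$ and $4t^2(z+1/z)^2=k^2\cos^2\phi$ with $k=4t$, and after noting that the imaginary part cancels by the symmetry $\phi\mapsto-\phi$,
\[
B_m(t)=\frac{1}{\pi}\int_0^\pi \frac{\cos(m\phi)}{\sqrt{1-k^2\cos^2\phi}}\,\rmd\phi .
\]
Independently I would rewrite $v_{k_1}'$ so that the same radicand appears. Using the descending Landen identities $k=2\sqrt{k_1}/(1+k_1)$ (equivalently $4/k^2=(1+k_1)^2/k_1$) and $K(k)=(1+k_1)K(k_1)$, the algebraic rearrangement
\[
(k_1-z^2)(1-k_1 z^2)=k_1\, z^2\Big((z+1/z)^2-\tfrac{4}{k^2}\Big)
\]
turns the definition \eqref{eq:vkmap} of $v_{k_1}$ into $v_{k_1}'(z)=\frac{1}{2kK(k)}\,z^{-1}\big((z+1/z)^2-4/k^2\big)^{-1/2}$. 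On the semicircle $(z+1/z)^2-4/k^2=-\frac{4}{k^2}(1-k^2\cos^2\phi)$, so $v_{k_1}'(e^{i\phi})\,i e^{i\phi}\,\rmd\phi$ is a real multiple of $(1-k^2\cos^2\phi)^{-1/2}\rmd\phi$.

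The main obstacle is fixing the branch of the elliptic square root consistently along $\gamma'$, because the slit $\{z\in\R:z^2\geq k_1\}$ passes through the endpoints $z=\pm1$ of the contour. I would pin it down by evaluating at the midpoint $z=i$: there the radicand in \eqref{eq:vkmap} equals $(1+k_1)^2$, so directly from the definition $v_{k_1}'(i)=\frac{1}{4K(k_1)(1+k_1)}=\frac{1}{4K(k)}>0$. Matching this anchor value (together with the orientation of $\gamma'$) forces
\[
v_{k_1}'(e^{i\phi})\,i e^{i\phi}=\frac{-1}{4K(k)\sqrt{1-k^2\cos^2\phi}},\qquad \phi\in(0,\pi),
\]
with the root taken positive. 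This sheet bookkeeping is the only delicate point; the radicand does not vanish anywhere on the unit circle (the branch points $\pm\sqrt{k_1},\pm1/\sqrt{k_1}$ lie off it), so there are no convergence issues at $z=\pm1$.

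Finally I would assemble the two computations. For $f=e_l,g=e_p$ one has $(f(z)-f(z^{-1}))(g(z)-g(z^{-1}))=-4\sin(l\phi)\sin(p\phi)$ on $\gamma'$, so the right-hand side of \eqref{eq:Boperatorexpr} becomes
\[
\frac{2}{\pi}\int_0^\pi \frac{\sin(l\phi)\sin(p\phi)}{\sqrt{1-k^2\cos^2\phi}}\,\rmd\phi .
\]
Using $\sin(l\phi)\sin(p\phi)=\tfrac12\big(\cos((l-p)\phi)-\cos((l+p)\phi)\big)$ together with the integral formula for $B_m(t)$ above, this equals $B_{l-p}(t)-B_{l+p}(t)$, which is exactly the target. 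The remaining work is the straightforward reduction to the basis and the routine Fourier-type integrals, so the proof is complete once the branch of $v_{k_1}'$ on $\gamma'$ has been fixed as above.
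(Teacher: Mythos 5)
Your proof is correct and follows essentially the same route as the paper: reduce to $f=e_l$, $g=e_p$, symmetrize the reflection formula to $B_{l-p}(t)-B_{l+p}(t)$, invoke the contour integral of Lemma \ref{thm:bilateral}, and identify the radical with $v_{k_1}'$ via the Landen relations $(k_1-z^2)(1-k_1z^2)=-z^2(1+k_1)^2\bigl(1-\tfrac{k^2}{4}(z+1/z)^2\bigr)$ and $K(k)=(1+k_1)K(k_1)$. The only (harmless) difference is that you recast everything as real integrals in $\phi$ and pin the branch by the anchor $v_{k_1}'(i)=1/(4K(k))$, whereas the paper stays with the complex contour form.
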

\begin{proof}
From the definition \eqref{eq:vkmap} we see that the integrand of \eqref{eq:Boperatorexpr} is continuous and bounded on the upper-half circle, and therefore the right-hand side of \eqref{eq:Boperatorexpr} converges absolutely.
Hence, it suffices to check the identity for $f=e_\ell$ and $g=e_p$, $p,\ell\geq 1$.
Combining \eqref{eq:Breflection} and Lemma \ref{thm:bilateral} we find
\begin{align}
\langle e_\ell, \mathbf{B} e_p \rangle_\Dir &= B_{\ell-p}(t) - B_{-\ell-p}(t) = \frac{1}{2}\left( B_{\ell-p}(t) - B_{-\ell-p}(t) + B_{p-\ell}(t) - B_{\ell+p}(t)\right) \nonumber\\
&= \frac{-1}{4\pi i} \int_{\gamma} \frac{(z^\ell-z^{-\ell})(z^p-z^{-p})\rmd z}{z \sqrt{1-4t^2(z+1/z)^2}} = \frac{-1}{2\pi i} \int_{\gamma'} \frac{(z^\ell-z^{-\ell})(z^p-z^{-p})\rmd z}{z \sqrt{1-4t^2(z+1/z)^2}},\label{eq:Bintegral}
\end{align}
where in the last equality we used that both sides vanish for $p+\ell$ odd, while for $p+\ell$ even the upper and lower half circles contribute equally.
Note that
\[
(k_1-z^2)(1-k_1 z^2) = - z^2( (1+k_1)^2 - k_1 (z+1/z)^2 ) = -z^2(1+k_1)^2 \left( 1-\frac{k^2}{4}(z+1/z)^2\right).
\]
Hence, for $z$ on the upper-half circle \eqref{eq:vkmap} implies that
\[
v_{k_1}'(z) = \frac{1}{4K(k_1)} \frac{1}{\sqrt{(k_1-z^2)(1-k_1 z^2)}} = \frac{i}{4K(k)}\frac{1}{z\sqrt{1-\frac{k^2}{4}(z+1/z)^2}},
\]
where we used \eqref{eq:landenk} and the sign on the right-hand is determined by the fact that $v_{k_1}'(z)$ has positive imaginary part when $z$ is on the upper-half circle close to $1$. 
Combining with \eqref{eq:Bintegral} we indeed reproduce the right-hand side of \eqref{eq:Boperatorexpr}.
\end{proof}

With the contour integral representation in hand it is now straightforward to evaluate $\mathbf{B}_k$ (and $\mathbf{A}_k$ subsequently) with respect to the basis $(f_m)_{m=1}^\infty$.

\begin{proposition}\label{thm:Bdiagonalization}
The linear operators $\mathbf{B}_k$ and $\mathbf{A}_k$ are compact and have the same eigenvectors $(f_m)_{m=1}^\infty$ as $\mathbf{J}_k$ satisfying
\begin{align*}
\mathbf{B}_k f_m &= \frac{2K(k)}{\pi} \frac{1}{m} \frac{1-q_k^m}{1+q_k^m}\, f_m, \\
\mathbf{A}_k f_m &= \frac{\pi}{2K(k)} \frac{m}{q_k^{-m/2}-q_k^{m/2}} \,f_m.
\end{align*}
\end{proposition}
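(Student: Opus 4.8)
The plan is to compute the matrix of $\mathbf{B}_k$ in the orthogonal basis $(f_m)_{m=1}^\infty$ supplied by Proposition \ref{thm:Jdiagonalization} using the contour-integral representation of Lemma \ref{thm:Boperatorintegral}, show it is diagonal, read off the eigenvalues, and then obtain $\mathbf{A}_k$ for free from the factorisation $\mathbf{J}_k = \mathbf{A}_k\mathbf{B}_k$ established at the start of this section. Compactness of $\mathbf{A}_k$ is already known (Hilbert--Schmidt), and $\mathbf{B}_k$ will turn out to be compact because it is diagonal in an orthogonal basis with eigenvalues tending to $0$.

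For $\mathbf{B}_k$ I would start from \eqref{eq:Boperatorexpr} with $f=f_n$, $g=f_m$ and perform the change of variables $v=v_{k_1}(z)$, so that $v_{k_1}'(z)\,\rmd z=\rmd v$. By Lemma \ref{lem:mapping} the arc $\gamma'$ is mapped biholomorphically onto the top edge of the rectangle $H_{k_1}$, i.e. $v=x+iT_{k_1}/2$ with $x$ running from $1/4$ (the image of $z=1$) down to $-1/4$ (the image of $z=-1$). Since $v_{k_1}$ has real Taylor coefficients and $z^{-1}=\bar z$ on the unit circle, $v_{k_1}(z^{-1})=\overline{v_{k_1}(z)}=\bar v$, whence $f_m(z)-f_m(z^{-1})=\cos(2\pi m(v+1/4))-\cos(2\pi m(\bar v+1/4))=-2i\sin(2\pi m(x+1/4))\sinh(\pi m T_{k_1})$. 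The product of the $n$- and $m$-factors together with $\rmd v=\rmd x$ reduces the matrix element to the elementary integral $\int_0^{1/2}\sin(2\pi n y)\sin(2\pi m y)\,\rmd y=\tfrac14\one_{\{m=n\}}$, giving $\DirProd{f_n}{\mathbf{B}_k f_m}=\frac{2K(k)}{\pi}\sinh^2(\pi m T_{k_1})\,\one_{\{m=n\}}$. Dividing by $\|f_m\|_\Dir^2=\frac12 m\sinh(2\pi m T_{k_1})$ and using $\sinh(2u)=2\sinh u\cosh u$ yields the eigenvalue $\frac{2K(k)}{\pi m}\tanh(\pi m T_{k_1})$; since $T_{k_1}=2T_k$ and $e^{-2\pi m T_{k_1}}=e^{-4\pi m T_k}=q_k^m$, this equals $\frac{2K(k)}{\pi m}\frac{1-q_k^m}{1+q_k^m}$, as claimed.

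With $\mathbf{B}_k$ diagonalised, $\mathbf{A}_k$ follows algebraically: applying $\mathbf{J}_k=\mathbf{A}_k\mathbf{B}_k$ to $f_m$ and using that the $\mathbf{B}_k$-eigenvalue $\lambda^{\mathbf B}_m$ is nonzero gives $\mathbf{A}_k f_m=(\lambda^{\mathbf J}_m/\lambda^{\mathbf B}_m)f_m$, where $\lambda^{\mathbf J}_m=1/(q_k^{m/2}+q_k^{-m/2})$ by Proposition \ref{thm:Jdiagonalization}. A short simplification using $q_k^{m/2}+q_k^{-m/2}=q_k^{-m/2}(1+q_k^m)$ reduces the ratio to $\frac{\pi}{2K(k)}\frac{m}{q_k^{-m/2}-q_k^{m/2}}$, matching the stated formula. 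Because $(f_m)$ is a complete orthogonal basis, this identifies $\mathbf{A}_k$ completely and exhibits $(f_m)$ as its eigenvectors.

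The main obstacle is that Lemma \ref{thm:Boperatorintegral} is stated only for $f,g$ analytic in a neighbourhood of $\overline{\disc}$, whereas $f_m$ inherits the branch points of $v_{k_1}$ at $z=\pm\sqrt{k_1}\in\disc$; hence the lemma does not apply to $f_n,f_m$ directly. I would close this gap by a density argument. The right-hand side of \eqref{eq:Boperatorexpr} extends to a bounded bilinear form on $\Dir\times\Dir$, because the boundary trace $\Dir\to L^2(\partial\disc)$ is bounded---indeed $\|f\|_{L^2(\partial\disc)}^2=\sum_{n\geq1}|[z^n]f|^2\leq\|f\|_\Dir^2$---while $v_{k_1}'$ and the boundary values of the fixed factor are bounded on the compact arc $\gamma'$. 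Since the left-hand side is manifestly $\Dir$-continuous and polynomials (which are analytic near $\overline{\disc}$) are dense in $\Dir$, the identity \eqref{eq:Boperatorexpr} extends first to all $f\in\Dir$ with $g$ a polynomial, then to all $f,g\in\Dir$, and in particular to $f_n,f_m$. Once this is in place the computation above is routine.
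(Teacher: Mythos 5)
Your proof is correct and follows essentially the same route as the paper: the contour-integral representation of Lemma \ref{thm:Boperatorintegral}, the change of variables $v=v_{k_1}(z)$ sending $\gamma'$ to the top edge of the rectangle, orthogonality of $\sin(2\pi m y)$ on $(0,1/2)$, and then $\mathbf{A}_k$ read off from the factorisation $\mathbf{J}_k=\mathbf{A}_k\mathbf{B}_k$. The one point where you diverge is your ``main obstacle'', which is in fact illusory: each $f_m$ is analytic on $\{|z|<1/\sqrt{k_1}\}$, because $v\mapsto\cos(2\pi m(v+1/4))$ is invariant under the reflection $v\mapsto \frac{1}{2}-v$ describing the monodromy of $v_{k_1}$ around its square-root branch points at $\pm\sqrt{k_1}$, so those singularities and the slits are removable and Lemma \ref{thm:Boperatorintegral} applies verbatim --- this is exactly what the paper invokes. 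Your density argument via the bounded trace $\Dir\to L^2(\partial\disc)$ is nevertheless sound and would serve as a valid (if unnecessary) substitute for that observation.
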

\begin{proof}
According to Lemma \ref{thm:fmradius} the functions $f_m(z)$ are analytic for $|z| < 1/\sqrt{k_1}$, and therefore we may apply Lemma \ref{thm:Boperatorintegral} to obtain
\begin{align*}
\langle f_n, \mathbf{B}_k f_m \rangle_{\Dir} &= \frac{2K(k)}{\pi} \int_{\gamma'} (f_n(z)-f_n(z^{-1}))(f_m(z)-f_m(z^{-1}))v_{k_1}'(z)\rmd z \\
& = -\frac{2K(k)}{\pi} \int_{-1/4}^{1/4} (g_n(v+iT_k) - g_n( v-iT_k))(g_m(v+iT_k) - g_m( v-iT_k))\rmd v,
\end{align*}
where we changed integration variables $z = z_{k_1}(v)$, using that $z_{k_1}(v-iT_k)=1/z_{k_1}(v+iT_k)$ and that $z_{k_1}$ maps $(-1/4,1/4) + i T_k$ to $\gamma'$ but with opposite orientation. 
Substituting the expression for $g_m$ yields
\begin{align*}
\langle f_n, \mathbf{B}_k f_m \rangle_{\Dir}& = -\frac{2K(k)}{\pi} \int_{0}^{1/2} (\cos(2\pi n (v+iT_k)) - \cos(2\pi n( v-iT_k)))\\
& \qquad\qquad\qquad\qquad \times(\cos(2\pi m (v+iT_k)) - \cos(2\pi m( v-iT_k)))\rmd v\\
&= \frac{8K(k)}{\pi} \sinh( 2 \pi n T_k)\sinh(2\pi m T_k) \int_{0}^{1/2} \sin(2\pi n v)\sin(2\pi m v) \rmd v\\
&= \frac{2K(k)}{\pi} \sinh^2(2\pi m T_k) \one_{\{m=n\}}.
\end{align*}
Together with \eqref{eq:fnorm} we conclude that $f_m$ is an eigenvector of $\mathbf{B}_k$ (which we already knew from Lemma \ref{thm:simdiag} and Proposition \ref{thm:Jdiagonalization}) with eigenvalue
\[
\frac{\langle f_m,\mathbf{B}_k f_m\rangle_\Dir}{\langle f_m,f_m\rangle_\Dir} = 
\frac{4K(k)}{\pi} \frac{\sinh^2(2m\pi T_k)}{m\sinh(4m\pi T_k)} = \frac{2K(k)}{\pi} \frac{\tanh(2m\pi T_k)}{m} = \frac{2K(k)}{\pi} \frac{1}{m} \frac{1-q_k^m}{1+q_k^m}.
\]
Since $\mathbf{J}_k = \mathbf{A}_k\mathbf{B}_k$, we find using Proposition \ref{thm:Jdiagonalization} that
\[
\mathbf{A}_k f_m = \frac{\pi}{2K(k)} m \frac{1+q_k^m}{1-q_k^m} \frac{1}{q_k^{m/2}+q_k^{-m/2}} f_m = \frac{\pi}{2K(k)} \frac{m}{q_k^{-m/2}-q_k^{m/2}} f_m.
\]
The eigenvalues of $\mathbf{B}_k$ and $\mathbf{A}_k$ both approach $0$ as $m\to\infty$, implying compactness (see e.g. \cite[Proposition 1.19]{zhu_operator_2007}). 
Note that we already established compactness of $\mathbf{A}_k$ in Proposition \ref{thm:JAB}.
\end{proof}

\subsection{Proof of Theorem \ref{thm:mainresult}}\label{sec:proofmainresult}

We start with part (i) with $\alpha\in\frac{\pi}{2}\Z$.
Given a walk $w\in\mathcal{W}_{\ell,p}^{(\alpha)}$, let $(s_i)_{i=1}^N$ be the sequence of times at which $w$ \emph{alternates between the axes}, i.e. $s_0 = 0$ and for each $j\geq 0$ we set $s_{j+1} = \inf\{s>s_j : |\theta^w_s - \theta^w_{s_j}| = \pi/2\}$ provided it exists (otherwise $s_j=s_N$ is the last entry in the sequence).
Let $(\alpha_j)_{j=0}^N$ and $(\ell_j)_{j=0}^{N}$ be the sequences of winding angles and distances to the origin defined by $\alpha_j = \theta_{s_j}^w$ respectively $\ell_j = |w_{s_j}|$ for $0\leq j\leq N$. 
It is now easy to see that for $0\leq j< N$ the part of the walk between time $s_{j}$ and $s_{j+1}$ is (up to a unique rotation around the origin and/or reflection in the horizontal axis) of the form of a walk $w^{(j)}\in \mathcal{J}_{\ell_{j+1},\ell_{j}}$.
Similarly, the last part of the walk between time $s_{N}$ and $|w|$ is (up to rotation) of the form of a walk $w^{(N)}\in\mathcal{B}_{\ell,\ell_N}$. See Figure \ref{fig:walkdecomposition} for an example.
\begin{figure}
	\begin{subfigure}[b]{0.275\textwidth}
		\centering
		\includegraphics[width=.85\linewidth]{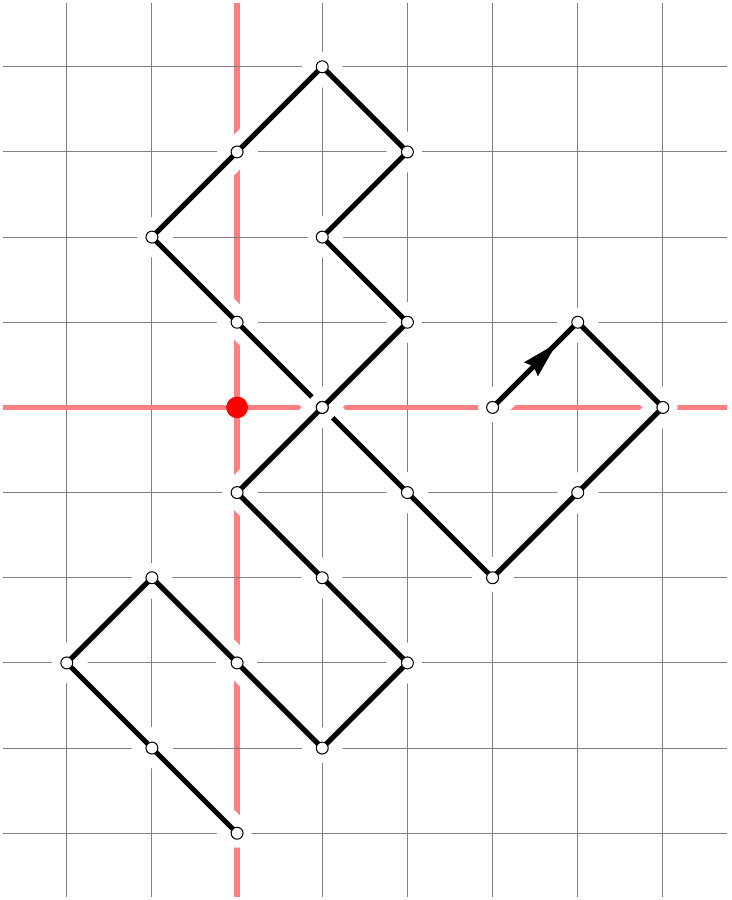}
		\caption{}\label{fig:decomposition1}
	\end{subfigure}%
	\begin{subfigure}[b]{0.275\textwidth}
		\centering
		\includegraphics[width=.85\linewidth]{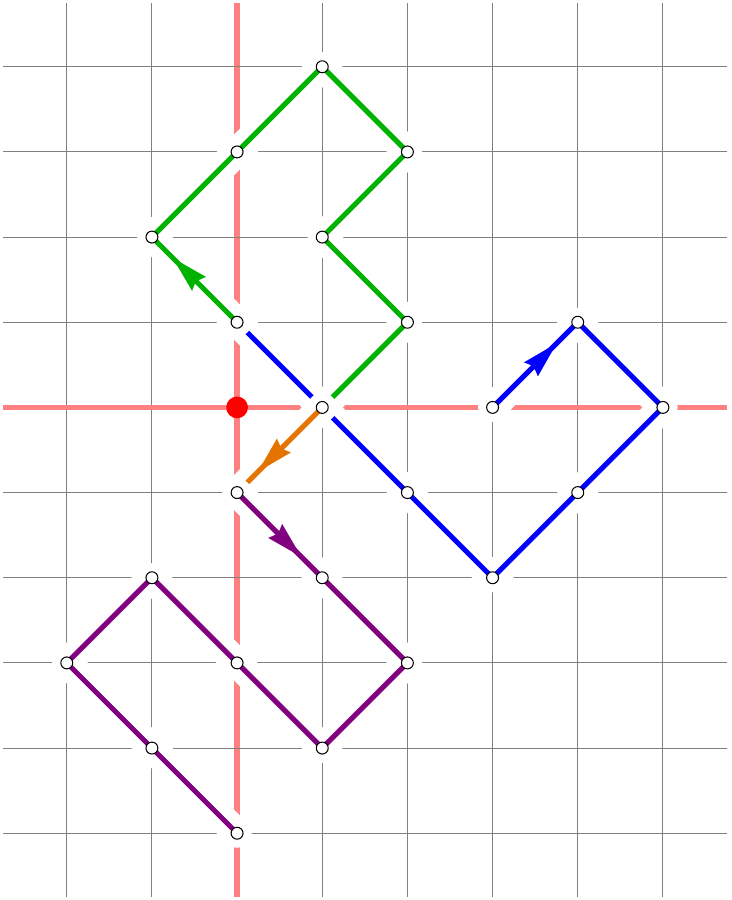}
		\caption{}\label{fig:decomposition2}
	\end{subfigure}%
	\begin{subfigure}[b]{0.45\textwidth}
		\centering
		\includegraphics[width=.99\linewidth]{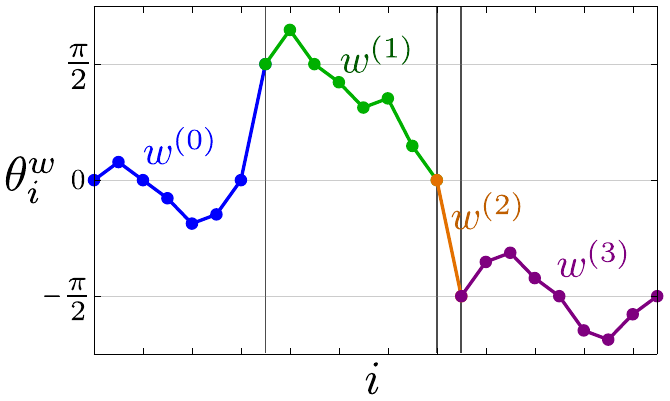}
		\caption{}\label{fig:decompositiongraph}
	\end{subfigure}%
	\caption{(a) A walk $w\in\mathcal{W}_{5,3}^{(-\pi/2)}$ with $N=3$ axis alternations; (b) its decomposition into $w^{(0)} \in \mathcal{J}_{1,3}$, $w^{(1)},w^{(2)}\in\mathcal{J}_{1,1}$, $w^{(3)}\in \mathcal{B}_{5,1}$; (c) its winding angle sequence $(\theta_i^w)$.}\label{fig:walkdecomposition}
\end{figure}

In fact, this construction is seen to yield a bijection between $\mathcal{W}_{\ell,p}^{(\alpha)}$ and the set of tuples 
\[
\left(N, (\ell_j)_{j=0}^{N}, (\alpha_j)_{j=0}^{N}, (w^{(j)})_{j=0}^N\right)
\]
where $N\geq 0$, $\ell_0=p$, $\ell_j\geq 1$, $(\alpha_j)_{j=0}^N$ is a simple walk on $\frac{\pi}{2}\Z$ from $\alpha_0=0$ to $\alpha_N=\alpha$, $w^{(j)}\in \mathcal{J}_{\ell_{j+1},\ell_{j}}$ for $0\leq j <N$ and $w^{(N)}\in\mathcal{B}_{\ell,\ell_N}$.
If we denote by 
\[
a^{(\alpha)}_N = \binom{N}{\frac{N-\frac{2\alpha}{\pi}}{2}}\one_{\{N-\frac{2\alpha}{\pi}\text{ even and non-negative}\}}
\] 
the number of simple walks on $\frac{\pi}{2}\Z$ from $0$ to $\alpha$ of length $N\geq 0$, then we may identify the generating function of $\mathcal{W}_{\ell,p}^{(\alpha)}$ as
\begin{align*}
W_{\ell,p}^{(\alpha)}(t) &= \sum_{N=0}^\infty a^{(\alpha)}_N \sum_{\ell_1,\ldots,\ell_N=1}^\infty B_{\ell,\ell_N}(t) \prod_{i=0}^{N-1}J_{\ell_{i+1},\ell_i}(t) \\
&=\sum_{N=0}^\infty a^{(\alpha)}_N \sum_{\ell_1,\ldots,\ell_N=1}^\infty \DirProd{e_\ell}{\mathbf{B}_ke_{\ell_N}} \prod_{i=0}^{N-1}\frac{1}{\ell_{i+1}}\DirProd{e_{\ell_{i+1}}}{\mathbf{J}_ke_{\ell_i}}\\
&=\sum_{N=0}^\infty a^{(\alpha)}_N \DirProd{e_\ell}{\mathbf{B}_k\,\mathbf{J}_k^N\,e_p}.\label{eq:BJcomposition}
\end{align*}
Since the eigenvalues of $\mathbf{J}_k$ are all strictly smaller than $1/2$ and $a^{(\alpha)}_N \leq 2^N$, the operator $\sum_{N=0}^M a^{(\alpha)}_N \mathbf{B}_k\,\mathbf{J}_k^N$ is compact for any $M\geq 0$ and converges as $M\to\infty$ (in the operator norm topology) to a compact self-adjoint operator $\mathbf{Y}^{(\alpha)}_k$ satisfying
\begin{equation}
W_{\ell,p}^{(\alpha)}(t) = \DirProd{e_\ell}{\mathbf{Y}_k^{(\alpha)}\,e_p}.
\end{equation}
With a little help of \eqref{eq:lagrangeinversion}, we find the (formal) generating function
\[
a^{(\alpha)}(x) = \sum_{N=0}^\infty a^{(\alpha)}_N x^N = \frac{1}{\sqrt{1-4 x^2}}\left(\frac{1-\sqrt{1-4x^2}}{2x}\right)^{2|\alpha|/\pi}. 
\]
Then one may deduce after some simplification that
\begin{align*}
\mathbf{Y}^{(\alpha)}_k f_m = \mathbf{B}_k\,a^{(\alpha)}\left(\mathbf{J}_k\right) f_m &= \frac{2K(k)}{\pi} \frac{1}{m} \frac{1-q_k^m}{1+q_k^m} a^{(\alpha)}\left( \frac{1}{q_k^{m/2}+q_k^{-m/2}}\right) f_m\\
&= \frac{2K(k)}{\pi} \frac{1}{m}\,q_k^{|\alpha| m/\pi} f_m,
\end{align*} 
in agreement with part (i) of Theorem \ref{thm:mainresult}.

We could easily extend this result to the case $I=(\beta_-,\beta_+)$ with $\beta_\pm \in \frac{\pi}{2}\Z\cup\{\pm\infty\}$ such that $0\in I$ and $\alpha\in I\cap\frac{\pi}{2}\Z$, by replacing $a^{(\alpha)}(x)$ by the generating function of simple walks on $\frac{\pi}{2}\Z$ confined to an interval.
Instead, we choose to discuss a reflection principle at the level of the simple diagonal walks, which allows us to directly generalize to the case of $\beta_\pm \in \frac{\pi}{4}\Z\cup\{\pm\infty\}$ of part (iii).
\begin{lemma}\label{thm:reflection}
Suppose $\ell$ and $p$ are both odd and $\beta_\pm \in \frac{\pi}{2}\Z$, or $\ell$ and $p$ are both even and $\beta_\pm \in \frac{\pi}{4}\Z$, such that $0\in I$. 
If in addition $\alpha \in \frac{\pi}{2}\Z \cap I$, then the generating function for $\mathcal{W}_{\ell,p}^{(\alpha,I)}$ is given by
\[
W_{\ell,p}^{(\alpha,I)}(t) = \sum_{n=-\infty}^\infty \left(W_{\ell,p}^{(\alpha+n\delta)}(t) - W_{\ell,p}^{(2\beta_+-\alpha+n\delta)}(t)\right),  \qquad \delta \coloneqq 2(\beta_+-\beta_-).
\]
\end{lemma}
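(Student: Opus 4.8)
The plan is to prove the identity by a \emph{method of images} for the winding angle, carried out directly on the diagonal walks. The guiding observation is that the two families $\{\alpha+n\delta\}_{n\in\Z}$ and $\{2\beta_+-\alpha+n\delta\}_{n\in\Z}$ appearing on the right-hand side are precisely the orbit of $\alpha$ under the infinite dihedral group $\Gamma$ generated by the two reflections $r_\pm:x\mapsto 2\beta_\pm-x$ of $\R$: the first family is the orbit under the orientation-preserving subgroup (the translations $x\mapsto x+n\delta$ generated by $r_+r_-$, to which I attach sign $+1$), and the second is the orientation-reversing coset (to which I attach sign $-1$). Under the stated hypotheses $\delta\in\frac\pi2\Z$ and $2\beta_+\in\frac\pi2\Z$, so every orbit value lies in $\frac\pi2\Z$ and each $W^{(\alpha')}_{l,p}$ is one of the generating functions of part (i). Writing $\sgn(\alpha')\in\{\pm1\}$ for the attached sign, the claim is equivalent to
\[
\sum_{\alpha'\in\Gamma\alpha}\sgn(\alpha')\,W_{l,p}^{(\alpha')}(t)=W_{l,p}^{(\alpha,I)}(t),
\]
which I would prove as an identity of power series for fixed $k=4t\in(0,1)$; absolute convergence of the left-hand side is guaranteed by the exponential decay of $W_{l,p}^{(\alpha')}(t)$ in $|\alpha'|$ coming from the eigenvalue $\frac{2K(k)}{\pi}\frac1m q_k^{m|\alpha'|/\pi}$ of $\mathbf{Y}^{(\alpha')}_k$.

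First I would record the geometric fact that makes reflection well defined: under the stated parity hypotheses a walk can only move its winding angle past a boundary value $\beta_\epsilon$ by landing exactly on the corresponding ray. Indeed, an odd diagonal walk changes each coordinate by $\pm1$ at every step, so it cannot pass from $x>0$ to $x<0$ (or $y>0$ to $y<0$) without visiting the corresponding axis; this handles $\beta_\pm\in\frac\pi2\Z$. For an even walk and $\beta_\epsilon$ an odd multiple of $\pi/4$, the quantity $y\mp x$ is even and changes by $0$ or $\pm2$, hence cannot change sign without vanishing, i.e.\ without the walk landing on the diagonal $y=\pm x$. In every admissible case, therefore, the first time $i^\ast$ a walk violates the open constraint $\theta_i^w\in(\beta_-,\beta_+)$ is a time at which $\theta_{i^\ast}^w\in\{\beta_-,\beta_+\}$ exactly.

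The core of the argument is then a sign-reversing involution $\Phi$ on the set of boundary-hitting walks inside $\bigsqcup_{\alpha'\in\Gamma\alpha}\mathcal{W}^{(\alpha')}_{l,p}$. Given such a walk $w$, with first boundary-hitting time $i^\ast$ and $\theta_{i^\ast}^w=\beta_\epsilon$, I would reflect the tail $w_{i^\ast},\dots,w_{|w|}$ across the lattice line through the origin at angle $\beta_\epsilon$ (an axis when $\beta_\epsilon\in\frac\pi2\Z$, a diagonal when $\beta_\epsilon\in\frac\pi4\Z\setminus\frac\pi2\Z$), keeping the head $w_0,\dots,w_{i^\ast}$ fixed. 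Since such reflections are isometries of $\Z^2$ fixing the origin and fixing the junction point $w_{i^\ast}$, the image $\Phi(w)$ is again a legitimate diagonal walk of the same length from $(p,0)$ to an axis at distance $l$, avoiding the origin; its winding increments after time $i^\ast$ are negated relative to $\beta_\epsilon$, so its full winding angle becomes $2\beta_\epsilon-\alpha'$, whence $\Phi$ flips the coset and the sign. Because $\Phi$ leaves the head, the hitting time $i^\ast$ and the hit value $\beta_\epsilon$ untouched, it is an involution, and every boundary-hitting walk is paired with one of opposite sign, so these contributions cancel.

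What survives the signed sum are exactly the walks that never meet $\{\beta_-,\beta_+\}$. Since $\alpha\in I$ and $\delta>\beta_+-\beta_-$, every orbit value other than $\alpha$ lies strictly outside $[\beta_-,\beta_+]$, so a walk reaching such a value must cross a boundary; the surviving walks therefore all end at $\alpha$ and carry sign $+1$, and they are precisely the members of $\mathcal{W}^{(\alpha,I)}_{l,p}$. This yields the displayed identity. The main obstacle I anticipate is the careful bookkeeping of the reflection on the universal cover of $\R^2\setminus\{0\}$: one must check that the elementary planar reflection of the tail really realises the map $\theta\mapsto 2\beta_\epsilon-\theta$ on winding angles on the correct sheet, and that exact hitting (the parity lemma above) is precisely what prevents a walk from jumping over a boundary and thereby breaking the involution.
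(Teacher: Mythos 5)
Your proposal is correct and is essentially the paper's own proof: the same parity observation (a walk can only cross $\beta_\pm$ by landing exactly on the corresponding axis or diagonal), the same reflection of the tail after the first boundary-hitting time, and the same identification of the unpaired walks with $\mathcal{W}_{l,p}^{(\alpha,I)}$. The paper merely phrases the cancellation as an injection from $\bigcup_n\mathcal{W}_{l,p}^{(2\beta_+-\alpha+n\delta)}$ onto the boundary-visiting walks of $\bigcup_n\mathcal{W}_{l,p}^{(\alpha+n\delta)}$, which is your sign-reversing involution restricted to one coset.
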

\begin{proof}
Consider any walk $w\in\bigcup_{n=-\infty}^\infty \mathcal{W}_{\ell,p}^{(2\beta_+-\alpha+n\delta)}$ and let $s = \inf\{ j\geq 0 : \theta^w_j \notin I\}$ be the first time $w$ leaves $I$, which is well-defined since $\theta^w \notin I$.
It is not hard to see that under the stated conditions on $\ell,p,\beta_\pm$ the winding angle sequence $(\theta_i^w)_{i=0}^{|w|}$ cannot cross $\beta_\pm$ without visiting $\beta_\pm$, and therefore $\theta_s^w = \beta_\pm$ and $w_s$ lies on an axis or a diagonal of $\Z^2$.
Then we let $w'$ be the walk obtained from $w$ by reflecting the portion of $w$ after time $s$ in this axis or diagonal (see Figure \ref{fig:reflectionexcursions}).
\begin{figure}
		\begin{subfigure}[b]{0.5\textwidth}
			\centering
			\includegraphics[height=.5\linewidth]{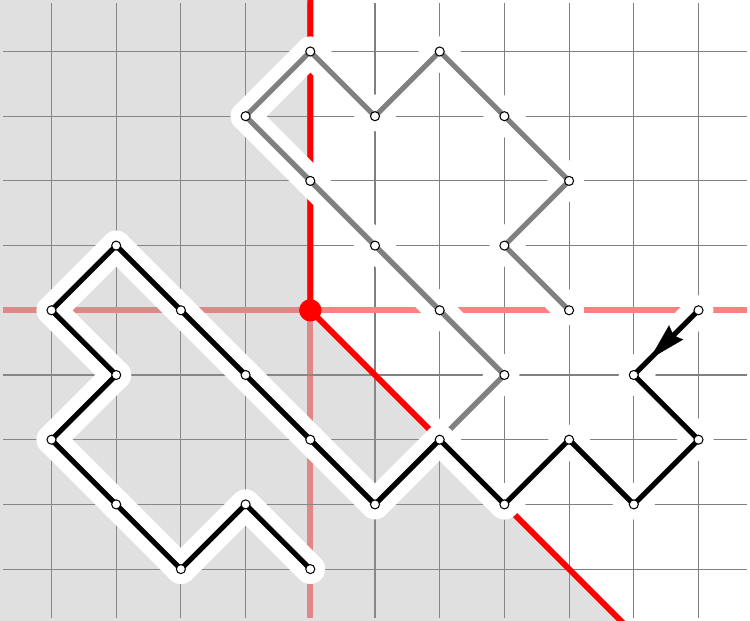}
			\caption{}\label{fig:reflectionexcursion}
		\end{subfigure}%
		\begin{subfigure}[b]{0.5\textwidth}
			\centering
			\includegraphics[height=.53\linewidth]{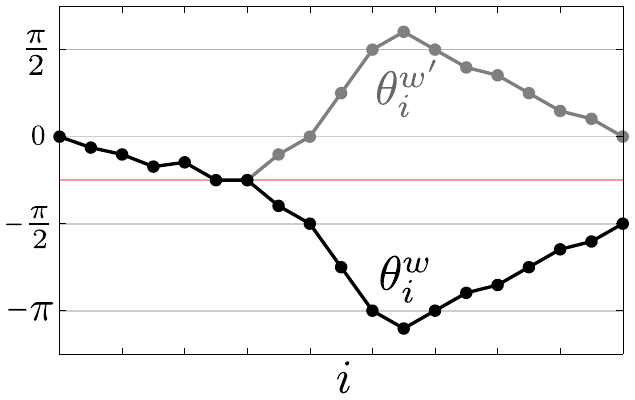}
			\caption{}\label{fig:reflectiongraph}
		\end{subfigure}%
	\caption{Example with $\alpha=0$, $I =(-\pi/4,\pi/2)$. (a) A walk $w\in\mathcal{W}_{4,6}^{(-\pi/2)}$ is depicted in black, and its reflection $w'\in \mathcal{W}_{4,6}^{(0)}$ in gray. (b) The corresponding winding angle sequences $(\theta^{w}_i)$ and $(\theta^{w'}_i)$. }\label{fig:reflectionexcursions}
\end{figure}
Then $\theta^{w'} = 2\beta_+ - \theta^w$ or $\theta^{w'} = 2\beta_- - \theta^w$.
Hence $w' \in \bigcup_{n=-\infty}^\infty \mathcal{W}_{\ell,p}^{(\alpha+n\delta)}$.
It is not hard to see that this mapping $w\mapsto w'$ is injective (the inverse $w'\mapsto w$ is given by the exact same reflection operation).
Moreover, any walk $w' \in \bigcup_{n=-\infty}^\infty \mathcal{W}_{\ell,p}^{(\alpha+n\delta)}$ is obtained in such way provided $(\theta_i^{w'})_{i=0}^{|w'|}$ visits $\beta_\pm$ at least once.
Clearly, the only walks $w'$ not satisfying the latter condition are the ones in $\mathcal{W}_{\ell,p}^{(\alpha,I)}$.
The claimed result for the generating function $W_{\ell,p}^{(\alpha,I)}(t)$ readily follows (absolute convergence is granted because $\sum_{\alpha'\in\frac{\pi}{2}\Z} W_{\ell,p}^{(\alpha')}(t) < \infty$).
\end{proof}

Inspired by this result let us introduce for $\beta_\pm\in \frac{\pi}{4}\Z$ such that $0\in I$ and $\alpha\in\frac{\pi}{2}\Z\cap I$, the operator $\mathbf{B}_k^{(\alpha,\beta_-,\beta_+)}$ on $\Dir$ defined by
\begin{equation}\label{eq:BfromY}
\mathbf{B}_k^{(\alpha,\beta_-,\beta_+)} \coloneqq \sum_{n=-\infty}^\infty \left(\mathbf{Y}_k^{(\alpha+n \delta)}-\mathbf{Y}_k^{(2\beta_+-\alpha+n \delta)}\right),  \qquad \delta \coloneqq 2(\beta_+-\beta_-),
\end{equation}
which by construction satisfies $\mathbf{B}_k^{(\alpha,\beta_-,\beta_+)} = \mathbf{B}_k^{(-\alpha,-\beta_+,-\beta_-)}$.
By Theorem \ref{thm:mainresult}(i) it is well-defined, compact and self-adjoint and has eigenvalues
\begin{equation}\label{eq:Bspectrumgeneral}
\frac{2K(k)}{\pi}\frac{1}{m} \sum_{n=-\infty}^\infty \left(q_k^{|\alpha+n \delta|m/\pi}-q_k^{|2\beta_+-\alpha+n \delta|m/\pi}\right) = \frac{2K(k)}{\pi}\, \frac{q_k^{2m\beta_-/\pi}-1}{m\,q_k^{m\alpha/\pi}}\, \frac{q_k^{2m\alpha/\pi}-q_k^{2m\beta_+/\pi}}{q_k^{2m\beta_-/\pi}-q_k^{2m\beta_+/\pi}}
\end{equation}
for $\alpha \geq 0$, while the eigenvalues for $\alpha < 0$ are obtained by the substitution $(\alpha,\beta_-,\beta_+) \to (-\alpha,-\beta_+,-\beta_-)$.
Lemma \ref{thm:reflection} then tells us that 
\begin{equation}\label{eq:genfunfromB}
W_{\ell,p}^{(\alpha,I)}(t) = \DirProd{e_\ell}{\mathbf{B}_k^{(\alpha,\beta_-,\beta_+)}e_p}
\end{equation}
holds under the conditions stated in the lemma, which exactly verifies part (ii) and (iii) for $0,\alpha\in I$ and $\beta_\pm$ finite.

Similarly when $\beta_-=-\infty$ or $\beta_+=\infty$ one may introduce the operators $\mathbf{B}_k^{(\alpha,-\infty,\beta_+)} = \mathbf{Y}_k^{(\alpha)} - \mathbf{Y}_k^{(2\beta_+-\alpha)}$ and $\mathbf{B}_k^{(\alpha,\beta_-,\infty)} = \mathbf{Y}_k^{(\alpha)} - \mathbf{Y}_k^{(2\beta_--\alpha)}$.
It is straightforward to check that then \eqref{eq:genfunfromB} still holds, and that the eigenvalues are given by \eqref{eq:Bspectrumgeneral} in the appropriate limit $\beta_-\to-\infty$ or $\beta_+ \to\infty$.

Next, let us consider the case $0<\alpha \in\frac{\pi}{2}\Z$ and $I = (0,\alpha)$. 
The case $\alpha=\pi/2$ with the corresponding operator $\mathbf{A}_k^{(\pi/2)} = \mathbf{A}_k$ has already been settled in Proposition \ref{thm:Bdiagonalization}, so let us assume $\alpha \geq \pi$.
Any such walk $w\in \mathcal{W}_{\ell,p}^{(\alpha,(0,\alpha))}$ is naturally encoded in a triple $w^{(1)},w^{(2)},w^{(3)}$ of walks with $w^{(1)} \in \mathcal{A}_{\ell_1,p}$, $w^{(2)} \in \mathcal{W}_{\ell_2,\ell_1}^{(\alpha-\pi,(-\pi/2,\alpha-\pi/2))}$ and $w^{(3)} \in \mathcal{A}_{\ell,\ell_2}$ for some $\ell_1,\ell_2\geq 1$.
Hence
\begin{align*}
W_{\ell,p}^{(\alpha,(0,\alpha))}(t) &= \sum_{\ell_1,\ell_2=1}^\infty \frac{1}{\ell\ell_2}\DirProd{e_\ell}{\mathbf{A}_k e_{\ell_2}} \DirProd{e_{\ell_2}}{\mathbf{B}_k^{(\alpha-\pi,-\pi/2,\alpha-\pi/2)} e_{\ell_1}}  \frac{1}{\ell_1p}\DirProd{e_{\ell_1}}{\mathbf{A}_k e_{p}}\\
&= \frac{1}{\ell p}\DirProd{e_\ell}{\mathbf{A}_k\mathbf{B}_k^{(\alpha-\pi,-\pi/2,\alpha-\pi/2)}\mathbf{A}_ke_p}.
\end{align*}
One may easily verify the claimed eigenvalues of $\mathbf{A}_k^{(\alpha)} = \mathbf{A}_k\mathbf{B}_k^{(\alpha-\pi,-\pi/2,\alpha-\pi/2)}\mathbf{A}_k$ and its compactness, thus settling Theorem \ref{thm:mainresult}(ii) for the operator $\mathbf{A}^{(\alpha)}$.

Finally, a similar argument shows that for $0<\alpha\in\frac{\pi}{2}\Z$, $0>\beta_-\in\frac{\pi}{4}\Z$, 
\[
W_{\ell,p}^{(\alpha,(\beta_-,\alpha))}(t) = \sum_{\ell_1=1}^\infty \frac{1}{\ell\ell_1}\DirProd{e_\ell}{\mathbf{A}_k e_{\ell_1}} \DirProd{e_{\ell_1}}{\mathbf{B}_k^{(\alpha-\pi/2,\beta_-,\alpha)} e_{p}} = \frac{1}{\ell} \DirProd{e_\ell}{\mathbf{A}_k\mathbf{B}_k^{(\alpha-\pi/2,\beta_-,\alpha)} e_{p}},
\]
and once again one may directly verify the eigenvalues of $\mathbf{J}_k^{(\alpha,\beta_-)}=\mathbf{A}_k\mathbf{B}_k^{(\alpha-\pi/2,\beta_-,\alpha)}$.
This verifies Theorem \ref{thm:mainresult}(ii) and (iii) for the operator $\mathbf{J}_k^{(\alpha,\beta_-)}$, thereby finishing the proof of Theorem \ref{thm:mainresult}.

\section{Excursions}

\subsection{Counting excursions with fixed winding angle}

Recall from the introduction the set of excursions $\mathcal{E}$ consisting of (non-empty) simple diagonal walks starting and ending at the origin with no intermediate returns.
For such an excursion $w\in\mathcal{E}$ we have a well-defined winding angle sequence $(\theta^w_i)_{i=0}^{|w|}$ with $\theta_1^w=\theta_0^w=0$ and $\theta^w = \theta_{|w|}^w = \theta_{|w|-1}^w$.
Our first goal is to compute the generating function of excursions with winding angle equal to $\alpha\in\frac{\pi}{2}\Z$,
\begin{equation}
F^{(\alpha)}(t) \coloneqq \sum_{w\in\mathcal{E}} t^{|w|} \one_{\{\theta^w = \alpha\}}.\label{eq:Ffixeddef}
\end{equation}
To this end we cannot directly apply Theorem \ref{thm:mainresult}(i) because the excursions start and end at the origin.
Nevertheless, a combinatorial trick allows us to relate $F^{(\alpha)}(t)$ to the generating functions $W_{\ell,p}^{(\alpha')}$ with $\alpha' > |\alpha|$. 
In order for this trick to work we first have to establish a bound on $W_{\ell,p}^{(\alpha)}$ as $\alpha$ gets large.

\begin{lemma}\label{thm:wsumbound}
	For $t\in(0,1/4)$, there exists a $C>0$ such that $\sum_{\ell,p\geq 1} W_{\ell,p}^{(\alpha)}(t) < C q_k^{\frac{|\alpha|}{\pi}-\frac{1}{4}}$ for all $\alpha\in\frac{\pi}{2}\Z\setminus\{0\}$.
\end{lemma}
\begin{proof}
	Let $r\in(1,1/\sqrt{k_1})$.  
	Since $\hat{f}_m$ has radius of convergence larger than $r$ (Lemma \ref{thm:fmradius}), Cauchy's inequality implies that $\left|[z^p] \hat{f}_m(z)\right| \leq r^{-p} \sup_{|z|=r} |\hat{f}_m(z)|$.
	Therefore
	\begin{equation}\label{eq:fcoeffabsbound}
	\sum_{p=1}^\infty \left| \DirProd{e_p}{\hat{f}_m}\right| = \sum_{p=1}^\infty p \left|[z^p]\hat{f}_m(z)\right| \leq \frac{r}{(r-1)^2} \sup_{|z|=r} |\hat{f}_m(z)| = \frac{r}{(r-1)^2} \frac{1}{\|f_m\|_\Dir} \sup_{|z|=r} |f_m(z)|.
	\end{equation}
	Given that $v_{k_1}$ maps the double-slit disc $\disc\setminus \{z\in\R : z^2 \geq k_1\}$ onto $H_{k_1}$ (Lemma \ref{thm:mapping}), we may choose $r\in(1,1/\sqrt{k_1})$ such that $v_{k_1}$ maps $\{z\in\C: |z| \leq r \}\setminus \{z\in\R : z^2 \geq k_1\}$ into the slightly larger rectangle $(-\frac{1}{4},\frac{1}{4}) + i(-\frac{5}{4}T_k,\frac{5}{4}T_k)$.
	The proof of Lemma \ref{thm:fhatbound} shows that within this triangle $|g_m(v)|$ satisfies
	\begin{equation}
	|g_m(v)| \leq 2 \cosh^2(\tfrac{5}{4} m \pi T_k) = \frac{1}{2} \left(q_k^{-\frac{5}{16}m} + q_k^{\frac{5}{16}m}\right)^2 \leq 2 q_k^{-\frac{5}{8}m}.
	\end{equation}
	The maximum of $|g_m(v)|$ on the latter is attained in the corners $\pm1/4\pm i \frac{5}{4}T_k$ of the rectangle (see also the proof of Lemma \ref{thm:fhatbound}), where it is bounded by
	\begin{equation*}
	\cosh( 2 \pi m \tfrac{5}{4}T_k) \leq q_k^{-5m/8}.
	\end{equation*}
	By \eqref{eq:fnorm}, there exists a $c'>0$ (depending on $t$) such that $\|f_m\|_\Dir \geq c' q_k^{-m/2}$ for all $m\geq 1$.
	It then follows from \eqref{eq:fcoeffabsbound} that we can find a $c>0$ such that
	\begin{equation}\label{eq:fcoeffabsbound2}
	\sum_{p=1}^\infty \left| \DirProd{e_p}{\hat{f}_m}\right| \leq c \frac{q_k^{-5m/8}}{\|f_m\|_\Dir} \leq \frac{c}{c'}\, q_k^{-m/8}\qquad \text{for all }m\geq 1.
	\end{equation}
	
	With the help of Theorem \ref{thm:mainresult}(i) we may evaluate for $\alpha\in\frac{\pi}{2}\Z\setminus\{0\}$,
	\begin{align*}
	\sum_{\ell,p=1}^\infty W_{\ell,p}^{(\alpha)}(t) &= \sum_{\ell,p= 1}^\infty \DirProd{e_\ell}{\mathbf{Y}_k^{(\alpha)} e_p} = \sum_{\ell,p\geq 1} \sum_{m=1}^\infty \frac{2K(k)}{\pi}\frac{1}{m} q_k^{m|\alpha|/\pi} \DirProd{e_\ell}{\hat{f}_m}\DirProd{e_p}{\hat{f}_m},
	\end{align*}
	which according to our bound \eqref{eq:fcoeffabsbound2} is seen to be absolutely convergent.
	Interchanging the summations and performing the sum over $\ell$ and $p$, we find that there exists a $c''>0$ such that
	\begin{equation*}
	\sum_{\ell,p=1}^\infty W_{\ell,p}^{(\alpha)}(t) \leq c'' \sum_{m=1}^\infty \frac{1}{m} q_k^{m(\frac{|\alpha|}{\pi} - \frac{1}{4})} = c'' q_k^{\frac{|\alpha|}{\pi} - \frac{1}{4}} \sum_{m=1}^\infty \frac{1}{m} q_k^{(m-1)(\frac{|\alpha|}{\pi} - \frac{1}{4})}\leq c'' q_k^{\frac{|\alpha|}{\pi} - \frac{1}{4}} \sum_{m=1}^\infty \frac{1}{m} q_k^{\frac{1}{4}(m-1)}. 
	\end{equation*}
	Since the latter sum converges, we have obtained the desired estimate.
\end{proof}

\begin{lemma}\label{thm:Falphacombinatorics}
	For $\alpha\in\frac{\pi}{2}\Z$, the series $F^{(\alpha)}(t)$ may be expressed as the absolutely convergent sum
	\begin{equation}\label{eq:Falternatingsum}
	F^{(\alpha)}(t) = 4\sum_{m,\ell,p=1}^\infty (-1)^{\ell+p+m+1} m\, W_{2\ell,2p}^{(|\alpha|+m\pi/2)}(t).
	\end{equation}
\end{lemma}
\begin{proof}
	By Lemma \ref{thm:wsumbound}, $\sum_{\ell,p=1}^\infty W_{2\ell,2p}^{(|\alpha|+m\pi/2)}(t) < C q_k^{\frac{m}{2}+\frac{|\alpha|}{\pi} - \frac{1}{4}}$ for some $C>0$ and all $\alpha\in\frac{\pi}{2}\Z$ and $m\geq 1$, from which it follows that the right-hand side of \eqref{eq:Falternatingsum} is absolutely convergent.

	Next we use that for $\alpha'\in\frac{\pi}{2}\Z_{>0}$, the sets $\bigcup_{p\in 4\Z_{>0}} \mathcal{W}_{\ell,p}^{(\alpha')}$ and $\bigcup_{p\in 4\Z_{>0}-2} \mathcal{W}_{\ell,p}^{(\alpha')}$ are nearly in bijection.
	Indeed, a walk in the former is mapped to a unique walk in the latter by moving its starting point by $(\pm2,0)$ depending on the direction of the first step (see Figure \ref{fig:mappingfirststep}), while keeping the other sites fixed.
	\begin{figure}
		\centering
		\includegraphics[width=.32\linewidth]{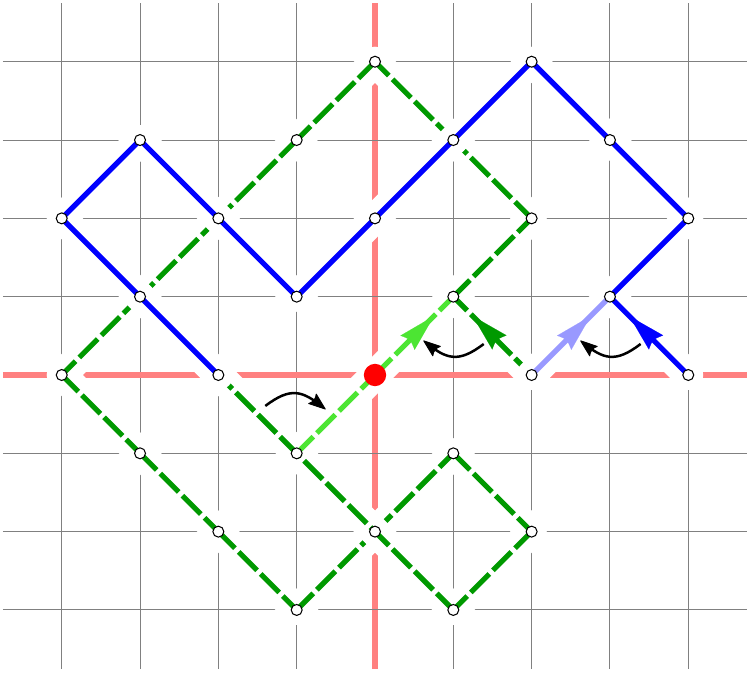}
		\caption{By changing the starting point the dark blue (solid) walk in $\mathcal{W}_{2,4}^{(\pi)}$ is mapped to a walk in $\mathcal{W}_{2,2}^{(\pi)}$. The  dark green (dashed) walk $w\in\mathcal{W}_{2,2}^{(\pi)}$ satisfies $|w_1| = |w_{|w|-1}| = \sqrt{2}$, such that moving its starting point and endpoint to the origin gives an excursion. }\label{fig:mappingfirststep}
	\end{figure}
	It is not hard to see that any walk $\bigcup_{p\in 4\Z_{>0}-2} \mathcal{W}_{\ell,p}^{(\alpha')}$ is obtained in such way except for those walks in $\mathcal{W}_{\ell,2}^{(\alpha')}$ that have $w_1=(1,\pm 1)$.
	The generating function of such walks is therefore given by
	\[
	 \sum_{w\in \mathcal{W}_{\ell,2}^{(\alpha')}} t^{|w|} \one_{\{|w_1|=\sqrt{2}\}} = \sum_{p=1}^\infty (-1)^{p+1} W^{(\alpha')}_{\ell,2p}(t). 
	\]
	An analogous argument for the endpoint then yields
	\begin{equation}\label{eq:Salphadef}
	S^{(\alpha')}(t)\coloneqq\sum_{w\in \mathcal{W}_{2,2}^{(\alpha')}} t^{|w|} \one_{\{|w_1|=|w_{|w|-1}|=\sqrt{2}\}} = \sum_{p,\ell=1}^\infty (-1)^{p+\ell} W^{(\alpha')}_{2\ell,2p}(t). 
	\end{equation}
	Let $X^{(\alpha')}$ be the set of four possible tuples $(w_1,w_{|w|-1},\theta^w_{|w|-1}-\theta^w_1)$ when $w\in \mathcal{W}_{2,2}^{(\alpha')}$ and $|w_1|=|w_{|w|-1}|=\sqrt{2}$.
	For instance, if $\alpha'=3\pi/2$ then 
	\begin{equation*}
	X^{(3\pi/2)} = \left\{ ((1,1),(-1,-1),\pi),\, ((1,1),(1,-1),3\pi/2),\, ((1,-1),(-1,-1),3\pi/2),\, ((1,-1),(1,-1),2\pi)\right\}.
	\end{equation*} 
	Then we may write
	\[
	S^{(\alpha')}(t) = \sum_{(x,y,\alpha)\in X^{(\alpha')}} \sum_{w\in \mathcal{W}_{2,2}^{(\alpha')}} t^{|w|} \one_{\{w_1=x,w_{|w|-1}=y\}} = \sum_{(x,y,\alpha)\in X^{(\alpha')}} \sum_{w\in\mathcal{E}}  t^{|w|} \one_{\{w_1=x,w_{|w|-1}=y\}}\one_{\{\theta^w = \alpha\}},
	\]
	where we used the obvious mapping to excursions by merely moving the starting point and endpoint to the origin (see Figure \ref{fig:mappingfirststep}).   
	By the rotational symmetry of the set of excursions we may replace the $\one_{\{w_1=x,w_{|w|-1}=y\}}$ by $1/4$ in the last sum.
	By observing that $\alpha$ takes exactly the four values $\alpha'-\pi/2$, $\alpha'$, $\alpha'$, $\alpha'+\pi /2$, one finds that
	\[
	S^{(\alpha')}(t) = \frac{1}{4}  \sum_{(x,y,\alpha)\in X^{(\alpha')}} \sum_{w\in\mathcal{E}}  t^{|w|}\one_{\{\theta^w = \alpha\}} = \frac{1}{4}F^{(\alpha'-\pi/2)}(t) + \frac{1}{2} F^{(\alpha')}(t)+\frac{1}{4}F^{(\alpha'+\pi/2)}(t).
	\]  
	From here it is an easy check by substitution that for $\alpha \in\frac{\pi}{2}\Z_{\geq0}$,
	\[
	4\sum_{m=1}^\infty (-1)^{m+1}\,m\,S^{(\alpha+m \pi/2)}(t) = F^{(\alpha)}(t) = F^{(-\alpha)}(t),
	\]
	which together with \eqref{eq:Salphadef} yields the claimed identity.
\end{proof}

We are now in the position to apply Theorem \ref{thm:mainresult}(i) and explicitly evaluate $F^{(\alpha)}(t)$, as well as its ``characteristic function''
\begin{equation}\label{eq:charfun}
F(t,b) \coloneqq \sum_{w\in\mathcal{E}} t^{|w|} e^{i b\theta^w} = \sum_{\alpha\in\frac{\pi}{2}\Z}  F^{(\alpha)}(t)\, e^{i b\alpha}.
\end{equation}
Notice that $b\mapsto F(t,b)$ is periodic with period $4$, and that by symmetry $F^{(\alpha)}(t) = F^{(-\alpha)}(t)$ and therefore $F(t,b)$ is real and $F(t,-b) = F(t,b)$.

\begin{proposition}\label{thm:excursiongenfun}
	The excursion generating functions are given by
	\begin{align}
		F^{(\alpha)}(t) &= \frac{2\pi}{K(k)} \sum_{n=1}^\infty \frac{(1-q_k^{n})^2}{1-q_k^{4n}}q_k^{n(\frac{2}{\pi}|\alpha|+1)}, \label{eq:excursionfixed}\\	
	F(t,b)&= \begin{cases} \displaystyle
	\frac{1}{\cos\left(\frac{\pi b}{2}\right)}\left[ 1-\frac{\pi \tan\left(\frac{\pi b}{4}\right)}{2K(k)} \frac{\theta_1'\left(\frac{\pi b}{4},\sqrt{q_k}\right)}{\theta_1\left(\frac{\pi b}{4},\sqrt{q_k}\right)}\right] & \text{for } b\in \R\setminus\Z, \\
	1-\frac{\pi}{2K(k)} & \text{for }b\in 4\Z,\\
	1 - \frac{2E(k)}{\pi} & \text{for }b\in 2\Z+1,\\
	-1 + \frac{4 E(k)}{\pi} - (1-k^2) \frac{2K(k)}{\pi} & \text{for }b\in 4\Z+2,
	\end{cases}\label{eq:Fexpression}
	\end{align}
	where $K(k)$ and $E(k)$ are the complete elliptic integrals of the first and second kind and $\theta_1(z,q)$ is a Jacobi theta function (see Appendix \ref{sec:ellipticappendix} for definitions).
\end{proposition}
\begin{proof}
	Combining Lemma \ref{thm:Falphacombinatorics} with Theorem \ref{thm:mainresult}(i) we find
	\begin{align}
	F^{(\alpha)}(t) &= 4 \sum_{m,\ell,p=1}^\infty (-1)^{\ell+p+m+1}m\,\DirProd{e_{2\ell}}{\mathbf{Y}_k^{(|\alpha|+m\pi/2)}e_{2p}}\nonumber\\
	&= \frac{4K(k)}{\pi}\sum_{m,n=1}^\infty (-1)^{m+1}\frac{m}{n} q_k^{n(\frac{2}{\pi}|\alpha|+m)}\frac{1}{\|f_{2n}\|^{2}} \left(\sum_{p=1}^\infty(-1)^p\DirProd{f_{2n}}{e_{2p}}\right)^2\nonumber\\
	&= \frac{4K(k)}{\pi}\sum_{n=1}^\infty \frac{1}{n} \frac{q_k^{n(\frac{2}{\pi}|\alpha|+1)}}{(1+q_k^{n})^2}\frac{1}{\|f_{2n}\|^{2}} \left(\sum_{p=1}^\infty(-1)^p\DirProd{f_{2n}}{e_{2p}}\right)^2,\label{eq:Falphasum}
	\end{align}
	where we used that $\DirProd{f_n}{e_{2p}}=0$ for $n$ odd.
	Since $f_{2m}(z)$ has radius of convergence larger than one (Lemma \ref{thm:fmradius}) and is even, we have
	\begin{align*}
	\sum_{p=1}^\infty (-1)^p\, \DirProd{f_{2n}}{e_{2p}} &= \sum_{p=1}^\infty (-1)^p\,2p\,  [z^{2p}] f_{2n}(z) = i f_{2n}'(i).
	\end{align*}
	From the definition \eqref{eq:vkmap} of $v_{k_1}$ one may read off that $v_{k_1}'(i) = 1/( 4 K(k_1) (1+k_1)) = 1/(4K(k))$ using \eqref{eq:landenk}, while $v_{k_1}(i) = i T_k$ using Lemma \ref{thm:mapping}(ii).
	Together with \eqref{eq:fmbasis} we then find
	\begin{align}
	\sum_{p=1}^\infty (-1)^p\, \langle e_{2p}, f_{2n} \rangle &= i\frac{g_{2n}'(iT_k)}{4 K(k)} = 
	(-1)^n \frac{\pi}{K(k)} n\sinh(4\pi n T_k) = (-1)^n \frac{\pi}{2K(k)}n (q_k^{-n}-q_k^n).\label{eq:falternating}
	\end{align}
	Combining with \eqref{eq:Falphasum} and \eqref{eq:fnorm} we arrive at 
	\[
	F^{(\alpha)}(t) = \frac{2\pi}{K(k)} \sum_{n=1}^\infty \frac{(q_k^{-n}-q_k^n)^2}{(1+q_k^n)^2}\frac{q_k^{n(\frac{2}{\pi}|\alpha|+1)}}{q_k^{-2n}-q_k^{2n}} = \frac{2\pi}{K(k)} \sum_{n=1}^\infty \frac{(1-q_k^{n})^2}{1-q_k^{4n}}q_k^{n(\frac{2}{\pi}|\alpha|+1)},
	\]
	establishing the first identity \eqref{eq:excursionfixed} of the Proposition.
	
	Using the identity 
	\[
	\sum_{\alpha\in\frac{\pi}{2}\Z} x^{\frac{2}{\pi}|\alpha|} e^{i b\alpha} = \frac{x^{-1}-x}{x+x^{-1}-2\cos(\pi b/2)}\qquad \text{for }0<x<1
	\]
	we obtain with some algebraic manipulation
	\begin{align}
	F(t,b) &= \frac{2\pi}{K(k)} \sum_{n=1}^\infty \frac{(1-q_k^{n})^2}{1-q_k^{4n}} \frac{1-q_k^{2n}}{q_k^{n}+q_k^{-n}-2\cos(\pi b/2)}\\
	&=\frac{2\pi}{K(k)} \sum_{n=1}^\infty \left(1-\frac{2}{q_k^n+q_k^{-n}}\right)\frac{1}{q_k^{n}+q_k^{-n}-2\cos(\pi b/2)}\label{eq:Fcomputestep}\\
	&= \frac{2\pi}{K(k)} \sum_{n=1}^\infty \left[ \left(1-\frac{1}{\cos(\pi b/2)}\right) \frac{1}{q_k^n+q_k^{-n}-2\cos(\pi b/2)} + \frac{1}{\cos(\pi b/2)} \frac{1}{q_k^n+q_k^{-n}}\right],\label{eq:Fcomputestep2} 
	\end{align} 
	where the last equality only holds for $b\in\R\setminus(2\Z+1)$.
	The first term in the sum can be handled for $b\in\R\setminus\Z$ by recognizing that
	\begin{align*}
	\sum_{n=1}^\infty \frac{1}{q_k^n+q_k^{-n}-2\cos(\pi b/2)} &= \sum_{n=1}^\infty \sum_{m=1}^\infty q_k^{m n} \frac{\sin(\pi b m/2)}{\sin(\pi b/2)} = \sum_{m=1}^\infty \frac{q_k^m}{1-q_k^m} \frac{\sin(\pi b m/2)}{\sin(\pi b/2)}\\
	&=\frac{1}{4\sin(\pi b/2)} \left[\frac{\theta_1'(\pi b/4,\sqrt{q_k})}{\theta_1(\pi b/4,\sqrt{q_k})}-\cot(\pi b/4)\right],
	\end{align*}
	where in the last equality we used \cite[16.29.1]{abramowitz_handbook_1964}.
	The second term follows from \cite[17.3.22]{abramowitz_handbook_1964},
	\[\sum_{n=1}^\infty \frac{1}{q_k^{n}+q_k^{-n}} = -\frac{1}{4} + \frac{K(k)}{2\pi}.
	\]
	Hence, for $b\in\R\setminus\Z$ we have
	\[
		F(t,b)=\frac{1}{\cos\left(\frac{\pi b}{2}\right)}\left[ 1-\frac{\pi \tan\left(\frac{\pi b}{4}\right)}{2K(k)} \frac{\theta_1'\left(\frac{\pi b}{4},\sqrt{q_k}\right)}{\theta_1\left(\frac{\pi b}{4},\sqrt{q_k}\right)}\right],
	\]
	as claimed.
	
	It remains to check the value of $F(t,b)$ at $b=0,1,2$, since $F(t,b+4) = F(t,b)$ and $F(t,-b)=F(t,b)$ as observed below \eqref{eq:charfun}.
	To this end we require two identities that follow from \cite[(23) and (26)]{bruckman_evaluation_1977}, namely
	\begin{align*}
	\sum_{n=1}^\infty \frac{1}{(q_k^n+q_k^{-n})^2} &= -\frac{1}{8} + \frac{1}{2}\sum_{n=-\infty}^\infty \frac{1}{(q_k^n+q_k^{-n})^2} = -\frac{1}{8} +\frac{1}{2\pi^2} K(k)E(k),\\
	\sum_{n=1}^\infty \frac{1}{(q_k^{n-1/2}+q_k^{1/2-n})^2} &= \frac{1}{2\pi^2}(K(k)E(k) - (1-k^2) K^2(k)). 
	\end{align*}
		Starting from \eqref{eq:Fcomputestep} or \eqref{eq:Fcomputestep2} and using these identities we obtain
	\begin{align*}
	F(t,0) &= \frac{2\pi}{K(k)} \sum_{n=1}^\infty \frac{1}{q_k^n+q_k^{-n}} = 1 - \frac{\pi}{2K(k)},\\
	F(t,1) &= \frac{4\pi}{K(k)} \left[ \frac{1}{2}\sum_{n=1}^\infty  \frac{1}{q_k^n+q_k^{-n}}- \sum_{n=1}^\infty  \frac{1}{(q_k^n+q_k^{-n})^2}\right] = \frac{4\pi}{K(k)} \left[\frac{K(k)}{4\pi} - \frac{E(k)K(k)}{2\pi^2}\right],\\
	F(t,2) &= \frac{2\pi}{K(k)} \left[2\sum_{n=1}^\infty \frac{1}{(q_{k}^{n/2}+q_{k}^{-n/2})^2} -\sum_{n=1}^\infty \frac{1}{q_k^n+q_k^{-n}}\right]\\
	&=\frac{2\pi}{K(k)} \left[2\sum_{n=1}^\infty \frac{1}{(q_{k}^{n}+q_{k}^{-n})^2} + 2\sum_{n=1}^\infty \frac{1}{(q_{k}^{n-1/2}+q_{k}^{1/2-n})^2} -\sum_{n=1}^\infty \frac{1}{q_k^n+q_k^{-n}}\right]\\
	&= \frac{2\pi}{K(k)}\left[ 2\frac{E(k)K(k)}{\pi^2} - (1-k^2) \frac{K^2(k)}{\pi^2} - \frac{K(k)}{2\pi}\right],
	\end{align*}
	which reduce to formulas in \eqref{eq:Fexpression}.
\end{proof}

There are more elementary ways to compute the quantity $F(t,b)$ at integer values of $b$, by directly considering the contributions of the individual walks to \eqref{eq:charfun}.
Indeed, $F(t,0)$ just counts all excursions from the origin without intermediate returns, and therefore is related to \eqref{eq:standardexcursiongenfun} via
\begin{equation*}
\frac{2}{\pi}K(4t) = \sum_{n=0}^\infty F(t,0)^n = \frac{1}{1-F(t,0)}.
\end{equation*}
Similarly, $F(t,1)$ precisely counts excursions that stay strictly within a single diagonal half-plane, while $F(t,2)$ counts excursions that stay strictly within a single quadrant.
This follows from a reflection principle that is a special case of Theorem \ref{thm:conegenfun} below, and indeed one may check that they are related to the cone generating functions $F^{(0,(-\pi/2,\pi/2))}(t) = F(t,1)/4$ and $F^{(0,(-\pi/4,\pi/4))}(t) = F(t,2)/4$ of Theorem \ref{thm:conegenfun}.
From elementary considerations we could therefore have determined that
\begin{align}
F(t,1) &= 4\sum_{n=0}^\infty  t^{2n+2} \sum_{p=0}^n \frac{1}{p+1} \binom{2p}{p} \binom{2(n-p)}{n-p} \binom{2n}{2p} = 1 - {_2F_1}(-\tfrac{1}{2},\tfrac{1}{2};1;(4t)^2), \label{eq:f1hyper}\\
\qquad 
F(t,2) &= 4\sum_{n=0}^\infty t^{2n+2}\left(\frac{1}{n+1}\binom{2n}{n}\right)^2 = -1 + {_2F_1}(-\tfrac{1}{2},-\tfrac{1}{2};1;(4t)^2), \label{eq:f2hyper}
\end{align}
in terms of the hypergeometric function ${_2F_1}(a,b;c;z) = \sum_{n=0}^\infty \frac{(a)_n(b)_n}{(c)_n} \frac{z^n}{n!}$.
With the help of \cite[17.3.10]{abramowitz_handbook_1964} and by noting that \eqref{eq:standardexcursiongenfun} implies $\frac{1}{16t} \frac{\partial}{\partial t} t \frac{\partial}{\partial t} F(t,2) = \frac{2}{\pi} K(4t)$ these can be shown to reproduce the explicit expressions in Proposition \ref{thm:excursiongenfun}.

By \eqref{eq:f1hyper} and \eqref{eq:f2hyper} and the fact \cite[17.3.9]{abramowitz_handbook_1964} that $\frac{2}{\pi}K(k) = {_2F_1}(\tfrac{1}{2},\tfrac{1}{2};1;k^2)$, all of the functions $F(t,b)$ at integer values $b$ can be expressed as (a rational function of) a hypergeometric function ${_2F_1}(\pm\tfrac{1}{2},\pm\tfrac{1}{2};1;(4t)^2)$ with appropriate signs.
It is a classical result \cite{schwartz_uber_1873} that all of these, hence all of $F(t,b)$ for $b\in\Z$, are transcendental in $t$.
Actually something stronger is true: $\frac{2}{\pi}K(k)$ and $\frac{2}{\pi}E(k)$ are algebraically independent in $k$.
Although we are certain this is a classical result, we were unable to find a reference.

\begin{lemma}\label{thm:EKindependent}
The series $\frac{2}{\pi}K(k)$ and $\frac{2}{\pi}E(k)$ are algebraically independent in $k$.
\end{lemma}
\begin{proof}
At $k=1/\sqrt{2}$ the complete elliptic integrals $K(k)$ and $E(k)$ take the value \cite[Theorem 1.7]{borwein_pi_1987}
\begin{equation*}
\frac{2K(1/\sqrt{2})}{\pi} = \frac{\Gamma^2(\tfrac{1}{4})}{2\pi^{3/2}},\qquad
\frac{2E(1/\sqrt{2})}{\pi} = \frac{8\pi^2 + \Gamma^4(\tfrac{1}{4})}{4\pi^{3/2}\Gamma^2(\tfrac{1}{4})}.
\end{equation*}
However, $\pi$ and $\Gamma(\tfrac{1}{4})$ (and $q_{1/2} =e^\pi$) are famously \cite{nesterenko_modular_1996} known to be algebraically independent over $\Q$.
Hence, $k$, $\frac{2}{\pi}K(k)$ and $\frac{2}{\pi}E(k)$ cannot satisfy a nontrivial polynomial equation with rational coefficients. 
\end{proof}

The situation is surprisingly different at other rational values of $b$:

\begin{corollary}\label{thm:Falgebraic}
	The power series $t\mapsto F(t,b)$ is algebraic if $b \in \Q \setminus\Z$ and transcendental if $b\in\Z$.
\end{corollary}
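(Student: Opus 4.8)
The plan is to start from the closed form for $F(t,b)$ valid for $b\in\R\setminus\Z$ and reduce the claim to the algebraicity of a single transcendental building block. Since $b\in\Q\setminus\Z$, the scalars $1/\cos(\pi b/2)$ and $\tan(\pi b/4)$ are algebraic numbers and harmless, so everything hinges on showing that
\[
G(t) \coloneqq \frac{\pi}{2K(k)}\,\frac{\theta_1'(\pi b/4,\sqrt{q_k})}{\theta_1(\pi b/4,\sqrt{q_k})}
\]
is an algebraic power series in $t$ (recall $k=4t$). The first step is to identify which modulus carries the nome $\sqrt{q_k}$. Writing $q_k=e^{-4\pi T_k}$ with $T_k=K(k')/(4K(k))$ and using the Landen scaling of $T_k$ (the relation $T_{k_1}=2T_k$ used above, read in the ascending direction), one checks that $\sqrt{q_k}=e^{-2\pi T_k}$ is the nome $q_\mu$ of the \emph{ascending} Landen transform $\mu=2\sqrt{k}/(1+k)$ of $k$, for which $k$ is in turn the descending transform. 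In particular $K(\mu)=(1+k)K(k)$ by the same identity $K(k)=(1+k_1)K(k_1)$, so that $\mu$ and the ratio $K(\mu)/K(k)=1+4t$ are algebraic functions of $t$.

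Next I would invoke the standard identity expressing the logarithmic derivative of $\theta_1$ through Jacobi elliptic functions and the Jacobi zeta function $Z(\cdot,\mu)$: with $z=\pi u/(2K(\mu))$,
\[
\frac{\pi}{2K(\mu)}\frac{\theta_1'(z,q_\mu)}{\theta_1(z,q_\mu)} = \frac{\cn(u,\mu)\,\dn(u,\mu)}{\sn(u,\mu)} + Z(u,\mu).
\]
Setting $z=\pi b/4$ gives $u=bK(\mu)/2$, a \emph{rational} multiple of $K(\mu)$, and multiplying by $K(\mu)/K(k)=1+4t$ shows $G(t)$ equals $(1+4t)$ times the right-hand side evaluated at this division point. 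The term $\cn\,\dn/\sn$ is a ratio of Jacobi elliptic functions at a rational multiple of the real quarter-period; such division values satisfy algebraic equations over $\Q(\mu)$ — one reads them off from the classical multiplication formulas for $\sn,\cn,\dn$ by specialising the values of $\sn,\cn,\dn$ at integer multiples of $K(\mu)$ — and are therefore algebraic functions of $\mu$, hence of $t$.

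The crux, and the step I expect to be the main obstacle, is to show that the zeta value $Z(bK(\mu)/2,\mu)$ at a rational multiple of the period is likewise algebraic, despite $Z$ being built from the transcendental integrals $E$ and $K$. Here I would use the addition theorem $Z(u,\mu)+Z(v,\mu)-Z(u+v,\mu)=\mu^2\sn(u,\mu)\sn(v,\mu)\sn(u+v,\mu)$; telescoping it with $v=u$ over $j=1,\dots,m-1$ yields the multiplication formula
\[
m\,Z(u,\mu) - Z(mu,\mu) = \mu^2\,\sn(u,\mu)\sum_{j=1}^{m-1}\sn(ju,\mu)\,\sn((j+1)u,\mu).
\]
Writing $u=bK(\mu)/2=(r/m)K(\mu)$ in lowest terms, one has $mu=rK(\mu)$ and $Z(rK(\mu),\mu)=0$, because $Z$ vanishes at every integer multiple of $K(\mu)$ (it has period $2K(\mu)$, and $Z(u,\mu)=E(u,\mu)-\tfrac{E(\mu)}{K(\mu)}u$ gives $Z(K(\mu),\mu)=0$). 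Hence $Z(u,\mu)$ is $\mu^2/m$ times a polynomial in the division values $\sn(ju,\mu)$, and the transcendental part $E/K$ has cancelled. Combining the algebraic prefactors, the $\cn\,\dn/\sn$ term and $Z$, and recalling that $\mu$ and $1+4t$ are algebraic in $t$, shows that $G(t)$, and therefore $F(t,b)$, is an algebraic power series in $t$. A final routine check confirms that $b\notin\Z$ keeps us away from the zeros of $\cos(\pi b/2)$, of $\theta_1(\pi b/4,\cdot)$ and of $\sn(u,\mu)$, so that all the expressions above are well defined.
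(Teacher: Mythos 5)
Your proposal is correct, and its engine is the same as the paper's: reduce everything to the Jacobi zeta function and the Jacobi elliptic functions evaluated at rational multiples of the quarter period, use the classical fact that the division values of $\sn,\cn,\dn$ are algebraic over the field generated by the modulus, and kill the transcendental part of $Z$ by telescoping the addition formula \eqref{eq:Zaddformula} down to $Z(rK,\cdot)=0$. The one place where you genuinely diverge is the treatment of the half-nome $\sqrt{q_k}$: the paper applies the theta-function Landen identity $\theta_1'/\theta_1(u,q)+\theta_4'/\theta_4(u,q)=\theta_1'/\theta_1(u,\sqrt{q})$ so as to stay at modulus $k$ throughout (arriving at \eqref{eq:FfromJacobiZ} with the coefficient $2Z(u,k)$), whereas you pass to the ascending Landen modulus $\mu=2\sqrt{k}/(1+k)$, whose nome is exactly $\sqrt{q_k}$, and work with $Z(\cdot,\mu)$ and $\sn(\cdot,\mu)$; since $\mu=4\sqrt{t}/(1+4t)$ and $K(\mu)/K(k)=1+4t$ are algebraic in $t$, this costs nothing. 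Your route has the small advantage of needing only the single standard identity $\frac{\pi}{2K}\theta_1'/\theta_1 = Z + \cn\dn/\sn$ rather than the pair of theta-Landen and zeta identities, and you make the multiplication formula for $Z$ explicit where the paper only sketches it; the paper's route has the advantage of keeping the modulus equal to $k=4t$, which is what it then exploits in the explicit computation of Corollary \ref{thm:gessel}. Either way the argument is complete.
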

\begin{proof}
	We have already dealt with the case $b\in\Z$, so let us assume $b \in \Q \setminus \Z$.
	Using the Landen transformation $\theta_1'(u,q)/\theta_1(u,q) + \theta_4'(u,q)/\theta_4(u,q) = \theta_1'(u,\sqrt{q})/\theta_1(u,\sqrt{q})$, which follows e.g. from the series representations \cite[16.29.1 \& 16.29.4]{abramowitz_handbook_1964}, we may rewrite $F(t,b)$ as
	\begin{equation}\label{eq:Finthetaq}
	F(t,b) = \frac{1}{\cos\left(\frac{\pi b}{2}\right)}\left[ 1-\frac{\pi \tan\left(\frac{\pi b}{4}\right)}{2K(k)} \left( \frac{\theta_1'\left(\frac{\pi b}{4},q_k\right)}{\theta_1\left(\frac{\pi b}{4},q_k\right)}+\frac{\theta_4'\left(\frac{\pi b}{4},q_k\right)}{\theta_4\left(\frac{\pi b}{4},q_k\right)}\right)\right],
	\end{equation}
	which in turn can be expressed using Jacobi's zeta function $Z(u,k)$ (see \eqref{eq:zetadef} in Appendix \ref{sec:ellipticappendix}) as\footnote{Thanks to Kilian Raschel for pointing out this relation!}
	\begin{equation}\label{eq:FfromJacobiZ}
	F(t,b) = \frac{1}{\cos\left(\frac{\pi b}{2}\right)}\left[ 1-\tan\left(\frac{\pi b}{4}\right)\left(2Z\left(u,k\right) + \frac{\cn\left(u,k\right)\dn\left(u,k\right)}{\sn\left(u,k\right)}\right)\right] , \qquad u = K(k)\, b/2. 
	\end{equation}
	The numbers $\cos(\pi b/2)$, $\tan(\pi b/4)$ are algebraic because $\cos(\pi b/2)$ and $\sin(\pi b/2)$ occur as roots of Chebyshev polynomials.
	To show that $F(t,b)$ is algebraic, we thus have to check that $\sn(u,k)$, $\cn(u,k)$, $\dn(u,k)$, and $Z(u,k)$ are algebraic in $k$.
	
	The Jacobi elliptic functions $\sn$, $\cn$, $\dn$ satisfy addition formulas \cite[16.17.1-3]{abramowitz_handbook_1964}, e.g. 
	\begin{equation*}
	\sn(x+y) = \frac{\sn x\cn y\dn y+\sn y\cn x\dn x}{1-k^2 \sn^2 x\sn^2 y},
	\end{equation*}
	where all elliptic functions are understood to have modulus $k$.
	These addition formulas allow one to express $\sn(n x, k)$ as a rational function of $k$, $\sn(x,k)$, $\cn(x,k)$ and $\dn(x,k)$. 
	Setting $x = K(k) b/2$ such that $\sn(n x,k)=0$ and eliminating $\cn(x,k)$ and $\dn(x,k)$ using the quadratic relations \eqref{eq:quadraticrelations}, one obtains a polynomial equation for $\sn(x,k)$ with coefficients that are polynomials in $k$, implying that $\sn(K(k)b/2,k)$ is algebraic in $k$ and similarly for  $\cn(K(k)b/2,k)$ and  $\dn(K(k)b/2,k)$. 
	
	The Jacobi zeta function $Z$ also satisfies an addition formula \eqref{eq:zetaaddition}, that can be used together with the addition formulas for $\sn$, $\cn$ and $\dn$ to express $Z(nx,k) - n Z(x,k)$ as a rational function of $k$, $\sn(x,k)$, $\cn(x,k)$ and $\dn(x,k)$.
	Setting $x=K(k) b/2$, we have that $Z(nx,k) = 0$ \cite[17.4.29-30]{abramowitz_handbook_1964} and $\sn(x,k)$, $\cn(x,k)$ and $\dn(x,k)$ are algebraic in $k$, so the same is true for $Z(K(k) b/2,k)$.
\end{proof}

\subsection{Asymptotic behaviour}

As an intermezzo let us look at a probabilistic consequence of Proposition \ref{thm:excursiongenfun} in the limit $t\to 1/4$.
For a simple random walk on $\Z^2$ started at the origin we consider the total winding angle around the origin up to its first return to the origin, ignoring the contributions from the first and last step (as we have been doing all along for excursions).

\begin{corollary}\label{thm:returnangle}
Consider a simple random walk on $\Z^2$ started at the origin.
The probability that its winding angle around the origin upon its first return equals $\pi m/2$ for $m\in\Z$ is
\[
\frac{1}{\pi} \left(- \psi\left(\frac{|m|+1}{4}\right)+2\psi\left(\frac{|m|+2}{4}\right)  - \psi\left(\frac{|m|+3}{4}\right)\right),
\]	
where $\psi(x) = \Gamma'(x)/\Gamma(x)$ is the digamma function.
\end{corollary}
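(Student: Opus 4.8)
The plan is to recognise the desired probability as the excursion generating function of Proposition~\ref{thm:excursiongenfun} evaluated at the critical point $t=1/4$, and then to extract its value by a careful $k\to1$ asymptotic analysis. Under the linear bijection between diagonal and rectilinear walks the winding angle around the origin is preserved, and a first return of the simple random walk to the origin is exactly an excursion $w\in\mathcal{E}$, realised with probability $4^{-|w|}$. Hence the sought probability equals $F^{(\pi m/2)}(1/4)$, and since $F^{(\alpha)}(t)$ has non-negative coefficients Abel's theorem lets me compute this as the limit $t\uparrow 1/4$ (equivalently $k\uparrow1$) of the closed form \eqref{eq:excursionfixed}. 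As a consistency check, recurrence of the planar walk corresponds to $\sum_\alpha F^{(\alpha)}(1/4)=F(1/4,0)=1-\pi/(2K(1))=1$.

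First I would simplify the summand, using $\frac{(1-q^n)^2}{1-q^{4n}}=\frac{1-q^n}{(1+q^n)(1+q^{2n})}$, so that with $\alpha=\pi m/2$ one has $\tfrac{2}{\pi}|\alpha|=|m|$ and
\[
F^{(\pi m/2)}(t)=\frac{2\pi}{K(k)}\sum_{n=1}^\infty \frac{(1-q_k^n)\,q_k^{n(|m|+1)}}{(1+q_k^n)(1+q_k^{2n})}.
\]
Writing $q_k=\rme^{-\epsilon}$ with $\epsilon=\pi K(k')/K(k)$, I have $\epsilon\to0$ and $K(k')\to\pi/2$ as $k\to1$, whence the prefactor satisfies $2\pi/K(k)=2\epsilon/K(k')\to(4/\pi)\epsilon$. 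The series is then a Riemann sum: setting $g(x)=\frac{(1-\rme^{-x})\rme^{-(|m|+1)x}}{(1+\rme^{-x})(1+\rme^{-2x})}$, which is continuous, non-negative and integrable on $(0,\infty)$ (it behaves like $x/4$ near $0$ and decays exponentially), standard convergence of Riemann sums gives $\epsilon\sum_{n\ge1}g(n\epsilon)\to\int_0^\infty g(x)\,\rmd x$. Therefore
\[
F^{(\pi m/2)}(1/4)=\frac{4}{\pi}\int_0^\infty \frac{(1-\rme^{-x})\,\rme^{-(|m|+1)x}}{(1+\rme^{-x})(1+\rme^{-2x})}\,\rmd x=\frac{4}{\pi}\int_0^1\frac{(1-u)\,u^{|m|}}{(1+u)(1+u^2)}\,\rmd u,
\]
the last step by the substitution $u=\rme^{-x}$.

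To evaluate the integral I would use the partial-fraction decomposition $\frac{1-u}{(1+u)(1+u^2)}=\frac{1}{1+u}-\frac{u}{1+u^2}$, reducing matters to the beta-type integral $\int_0^1\frac{u^{a-1}}{1+u}\,\rmd u=\sum_{n\ge0}\frac{(-1)^n}{a+n}=\tfrac12\!\left[\psi\!\left(\tfrac{a+1}{2}\right)-\psi\!\left(\tfrac{a}{2}\right)\right]$. Applying this with $a=|m|+1$ to the first piece, and substituting $v=u^2$ (so that $a=(|m|+2)/2$) in the second, yields $F^{(\pi m/2)}(1/4)$ as a combination of digamma values at the arguments $\tfrac{|m|+1}{2},\tfrac{|m|+2}{2},\tfrac{|m|+2}{4},\tfrac{|m|+4}{4}$. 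Finally the duplication formula $\psi(2z)=\tfrac12\psi(z)+\tfrac12\psi(z+\tfrac12)+\ln2$ collapses the half-integer-denominator terms into quarter-integer ones (the $\ln 2$ cancelling) and, after cancellation of the $\tfrac{|m|+4}{4}$ contributions, produces the stated expression $\frac{1}{\pi}\big(-\psi(\tfrac{|m|+1}{4})+2\psi(\tfrac{|m|+2}{4})-\psi(\tfrac{|m|+3}{4})\big)$.

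The main obstacle is the $k\to1$ limit: the prefactor vanishes while the series diverges, so the result hinges on matching these rates precisely (via the asymptotics $q_k=\rme^{-\epsilon}$, $K(k')\to\pi/2$) and on justifying both the Riemann-sum convergence and its compatibility with the Abel limit from Theorem~\ref{thm:mainresult}. Once this single analytic step is controlled, the remainder is a routine, if slightly lengthy, evaluation of beta-type integrals together with one application of the digamma duplication formula.
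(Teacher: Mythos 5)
Your proposal is correct and arrives at the stated formula; it shares the paper's starting point but evaluates the limit by a genuinely different route. Both arguments identify the probability with $F^{(\pi m/2)}(1/4)$ and pass to the limit $t\uparrow 1/4$ in the closed form \eqref{eq:excursionfixed}, with the same matching of rates $\frac{2\pi}{K(k)}\sim\frac{4}{\pi}(1-q_k)$ against a sum that diverges like $(1-q_k)^{-1}$. The paper extracts the constant by expanding $(1-q_k^{4n})^{-1}$ as a geometric series, interchanging the two sums, summing over $n$ in closed form to get $\sum_{p\geq0}\bigl[\frac{1}{1-q_k^{|m|+4p+1}}-\frac{2}{1-q_k^{|m|+4p+2}}+\frac{1}{1-q_k^{|m|+4p+3}}\bigr]$, and applying $\frac{1}{1-q_k^{a}}\sim\frac{1}{a(1-q_k)}$ termwise, so that the quarter-integer digamma arguments appear directly from the series representation of $\psi$; no integral evaluation or duplication formula is needed. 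You instead treat the $n$-sum as a Riemann sum with mesh $\epsilon=-\log q_k$, obtaining $\frac{4}{\pi}\int_0^1\frac{(1-u)u^{|m|}}{(1+u)(1+u^2)}\,\rmd u$, and then evaluate this Euler-type integral by partial fractions, $\int_0^1\frac{u^{a-1}}{1+u}\rmd u=\frac{1}{2}\bigl[\psi\bigl(\frac{a+1}{2}\bigr)-\psi\bigl(\frac{a}{2}\bigr)\bigr]$ and the duplication formula; I checked that the $\psi\bigl(\frac{|m|+4}{4}\bigr)$ terms cancel as you claim and the stated combination results. The analytic burden is comparable: the paper needs uniformity in $p$ for its termwise limit, while you need the Riemann-sum convergence for your $g$ (routine, since $g$ is continuous, integrable and of bounded variation on $(0,\infty)$). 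Your integral representation has the small bonus of making positivity and the normalization $\sum_{m\in\Z}F^{(\pi m/2)}(1/4)=1$ transparent, at the cost of a slightly longer closed-form evaluation at the end.
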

\begin{proof}
	Let $m\in \Z$ be fixed.
	First we rewrite the sum in \eqref{eq:excursionfixed} with $\alpha = m\pi/2$ as 
	\begin{align*}
\sum_{n=1}^\infty \frac{(1-q_k^{n})^2}{1-q_k^{4n}}q_k^{n(|m|+1)} &=  \sum_{n=1}^\infty \sum_{p=0}^\infty q_k^{n(|m|+4p+1)} (1-q_k^n)^2\\
& =  \sum_{p=0}^\infty \left[\frac{1}{1-q_k^{|m|+4p+1}} - \frac{2}{1-q_k^{|m|+4p+2}} + \frac{1}{1-q_k^{|m|+4p+3}}\right]
	\end{align*}
	As $k\to 1$, i.e. $q_k\to 1$, the latter is asymptotically equal to
	\begin{align*}
	\sum_{p=0}^\infty&\left[  \frac{1}{|m|+4p+1} - \frac{2}{|m|+4p+2} + \frac{1}{|m|+4p+3}\right] \frac{1}{1-q_k}(1 + o(1))\\
	&= \frac{1}{4}\left(- \psi\left(\frac{|m|+1}{4}\right)+2\psi\left(\frac{|m|+2}{4}\right)  - \psi\left(\frac{|m|+3}{4}\right)\right)\frac{1}{1-q_k}(1 + o(1)),
	\end{align*}
	where we used the series representation $\psi(x+1)-\psi(1) = \sum_{p=1}^\infty \left(\frac{1}{p}-\frac{1}{x+p}\right)$ of the digamma function.
	From the definition \eqref{eq:nomedef} of the nome it follows that $K(k)\log q_k = -\pi K(k')$, so $K(k)(1-q_k) \to \pi K(0) = \pi^2/2$ as $k\to 1$. 
	Hence, we find from \eqref{eq:excursionfixed} that as $t\to 1/4$,
	\[
	F^{(m\pi/2)}(t) \to \frac{1}{\pi} \left(- \psi\left(\frac{|m|+1}{4}\right)+2\psi\left(\frac{|m|+2}{4}\right)  - \psi\left(\frac{|m|+3}{4}\right)\right).
	\]
	From the definition \eqref{eq:Ffixeddef} it is easily seen that this is precisely the desired probability.
\end{proof}

Next we look at the growth of the coefficients $[t^{2\ell}]F(t,b)$ as $\ell\to\infty$.

\begin{lemma}\label{thm:Fasymptotics}
	For fixed $b \in [0,2]$ the coefficients of $t\mapsto F(t,b)$ satisfy the estimate 
	\[
	[t^{2\ell}] F(t,b) \sim \begin{cases}
\sin^2\left(\frac{\pi b}{4}\right)\frac{\Gamma(1+b)}{\pi}  \frac{4^{2(\ell+1-b)}}{\ell^{b+1}} & \text{for } b\in (0,2)\\
	\frac{\pi}{\ell\log^2 \ell}4^{2\ell} &\text{for } b=0\\
	\frac{1}{4\pi \ell^3}4^{2\ell}&\text{for } b=2.
	\end{cases}
	\]
	The analogous results for other $b\in\R$ follow from $F(t,b)=F(t,-b)=F(t,b+4)$. 
\end{lemma}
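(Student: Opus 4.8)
The plan is to read off the large-$l$ asymptotics by singularity analysis (Flajolet--Odlyzko) applied to $F(t,b)$ regarded as a function of the single variable $x \coloneqq k^2 = 16\,t^2$. Since the excursion generating functions are even in $t$, we have $[t^{2l}]F(t,b) = 4^{2l}\,[x^l]F(t,b)$, which already accounts for the ubiquitous factor $4^{2l}$; the task then reduces to locating the dominant singularity of $x\mapsto F(t,b)$ and determining its local type. Because $K(k)$ and $E(k)$ (and hence $q_k$ and the theta quotient) are analytic in $x$ on $\C\setminus[1,\infty)$, the function $F(t,b)$ continues analytically to a $\Delta$-domain at $x=1$ (that is, $t=\tfrac14$), so the transfer theorems apply once the singular behaviour at $x=1$ is identified.

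For generic $b\in(0,2)$ the efficient way to expand near $k=1$ is through the conjugate nome $\tilde q \coloneqq q_{k'} = e^{-\pi K(k)/K(k')}$, which tends to $0$ as $k\to1$ with $\tilde q \sim k'^2/16 = (1-x)/16$. I would apply Jacobi's imaginary (modular) transformation to the logarithmic derivative $\theta_1'(\tfrac{\pi b}{4},\sqrt{q_k})/\theta_1(\tfrac{\pi b}{4},\sqrt{q_k})$ of Proposition \ref{thm:excursiongenfun}, trading the nome $\sqrt{q_k}\to1$ for the small conjugate nome. After the transformation the logarithmic derivative becomes a cotangent of a large imaginary argument (contributing only an analytic, constant-plus-regular part) plus corrections organised in powers of $\tilde q$, the first non-analytic one being of order $\tilde q^{\,b}\sim\bigl((1-x)/16\bigr)^{b}$. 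Collecting the prefactors $1/K(k)$, $\tan(\tfrac{\pi b}{4})$ and $1/\cos(\tfrac{\pi b}{2})$ yields a singular part $C\,(1-x)^{b}$ with $C = -\tan(\tfrac{\pi b}{4})\,4^{1-2b}/\cos(\tfrac{\pi b}{2})$. Transfer then gives $[x^l]F\sim \frac{C}{\Gamma(-b)}\,l^{-b-1}$, and the claimed constant $\sin^2(\tfrac{\pi b}{4})\Gamma(1+b)/\pi$ drops out after rewriting $C/\Gamma(-b)$ via the reflection formula $\Gamma(-b)\Gamma(1+b) = -\pi/\sin(\pi b)$ together with $\sin(\pi b) = 4\sin(\tfrac{\pi b}{4})\cos(\tfrac{\pi b}{4})\cos(\tfrac{\pi b}{2})$. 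Equivalently one may run the computation on the Jacobi-zeta form \eqref{eq:FfromJacobiZ}, using $u=K(k)b/2\to\infty$ and $Z(u,1)=\tanh u$, $\cn(u,1)=\dn(u,1)=\sech u$.

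For the boundary values $b=0,2$ the power $(1-x)^{b}$ is analytic and the singularity degenerates to a logarithmic one, so I would instead use the explicit special values of Proposition \ref{thm:excursiongenfun} together with the classical expansions $K(k)=\ln(4/k')+o(1)$ and $E(k)=1+\tfrac12 k'^2\bigl(\ln(4/k')-\tfrac12\bigr)+\tfrac{3}{16}k'^4\bigl(\ln(4/k')-\tfrac{13}{12}\bigr)+\cdots$ as $k\to1$, with $k'^2=1-x$. For $b=0$ one has $F(t,0)=1-\pi/(2K(k))\sim 1-\pi/\ln\tfrac{1}{1-x}$, whose logarithmic transfer (the $\alpha=0$ case) gives $[x^l]F\sim \pi/(l\log^2 l)$. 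For $b=2$, writing $F(t,2)=-1+4E(k)/\pi-(1-x)\,2K(k)/\pi$ and substituting the expansions, the terms in $\ln\tfrac1{1-x}$ and $(1-x)\ln\tfrac1{1-x}$ cancel against the analytic pieces, leaving the leading singular term $\tfrac{1}{8\pi}(1-x)^2\log\tfrac1{1-x}$, whose coefficient is $\tfrac{1}{4\pi l^3}$ because $[x^l](1-x)^2\log\tfrac1{1-x}\sim 2/l^3$. As a consistency check the integer value $b=1$ gives $F(t,1)=1-2E(k)/\pi$ with singular part $-\tfrac1{2\pi}(1-x)\log\tfrac1{1-x}$ and coefficient $\tfrac{1}{2\pi l^2}$, in agreement with the generic formula by continuity in $b$.

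The genuinely delicate points are twofold. First, the transfer must be justified rigorously: one needs the analyticity of $F(t,b)$ in a $\Delta$-domain at $x=1$ and a verification that the error terms in the $\tilde q$-expansion (respectively the $k'$-expansion) are uniformly of strictly smaller singular order, so that term-by-term transfer is legitimate. Second, the two boundary cases are where the power law $(1-x)^{b}$ confluences with the analytic background to produce logarithms; this is precisely where the second-order terms of the $K,E$ expansions (for $b=2$) and the logarithmic transfer theorem (for $b=0$) demand care. By contrast, the bulk computation for generic $b$ is essentially bookkeeping of the modular transformation once the conjugate-nome expansion is in place, so the main obstacle is the careful matching of constants through the reflection formula and the separate, hands-on treatment of $b\in\{0,1,2\}$.
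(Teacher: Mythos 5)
Your proposal follows essentially the same route as the paper: Jacobi's imaginary transformation to pass to the conjugate nome $q_{k'}\sim(1-k^2)/16$, identification of the singular part $-4^{1-2b}\tan(\tfrac{\pi b}{4})\cos^{-1}(\tfrac{\pi b}{2})\,(1-k^2)^b$, transfer, and the reflection formula, with the boundary cases read off from the explicit $K,E$ expressions. The only (equally valid) deviation is at $b=0$, where you apply singularity analysis directly to the $1/\log$-type singularity of $1-\pi/(2K(k))$ rather than invoking, as the paper does, the known $\pi/(l\log^2 l)$ asymptotics of the first-return probability of the simple random walk.
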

\begin{proof}
	We first consider the case $b\in(0,1)\cup(1,2)$.
	For fixed $z\in \R \setminus \pi \Z$, by \eqref{eq:nomebound} and \eqref{eq:theta1def} the functions $q_k^{-1/4} \theta_1(z,q_k)$ and $q_k^{-1/4} \theta_1'(z,q_k)$ are analytic in $k \in \C \setminus \{ w\in\R : w^2 \geq 1\}$ and both have only a single simple zero at $k=0$ \cite[16.36.2]{abramowitz_handbook_1964}. 
	Similarly $\theta_4(z,q_k)$ and $\theta_4'(z,q_k)$ are analytic in $k \in \C \setminus \{ w\in\R : w^2 \geq 1\}$ and $\theta_4(z,q_k)$ has no zeroes \cite[16.36.2]{abramowitz_handbook_1964}.
	It follows that 
	\begin{equation*}
	\frac{\theta_1'\left(z,q_k\right)}{\theta_1\left(z,q_k\right)}+\frac{\theta_4'\left(z,q_k\right)}{\theta_4\left(z,q_k\right)}
	\end{equation*}
	is analytic in $k \in \C \setminus \{ w\in\R : w^2 \geq 1\}$ for $z\in \R \setminus \pi \Z$.
	Since the same is true for $K(k)$ and $K(k)\neq 0$ (see \eqref{eq:nomebound}), we deduce from the expression \eqref{eq:Finthetaq} that $k\mapsto F(k/4,b)$ with $b\in(0,1)\cup(1,2)$ is analytic in $k \in \C \setminus \{ z\in\R : z^2 \geq 1\}$.
	Hence, we should focus on the behaviour of $F(k/4,b)$ as $k^2 \to 1$, which corresponds to $q_k \to 1$, $k' \to 0$ and $q_{k'} \to 0$.
	
	For $b$ fixed we may use Jacobi's identity (see e.g. \cite[\S22 (6)]{akhiezer_elements_1990})
	\begin{align*}
	\theta_1\left(\frac{\pi b}{4},\sqrt{q_k}\right) = \frac{i}{\sqrt{2T_k}} e^{-\frac{\pi b^2}{32 T_k}} \theta_1\left( -i\frac{\pi b}{8T_k}, q_{k'}^2\right). 
	\end{align*}
	Taking logarithmic derivatives in $b$ on both sides, we find
	\begin{align*}
	\frac{\pi }{4K(k)}\frac{\theta_1'\left(\frac{\pi b}{4},\sqrt{q_k}\right)}{\theta_1\left(\frac{\pi b}{4},\sqrt{q_k}\right)} &= - \frac{1}{K(k)}\left( \frac{i\pi}{8T_k} \frac{\theta_1'\left(-i\frac{\pi b}{8T_k},q_{k'}^2\right)}{\theta_1\left(-i\frac{\pi b}{8T_k},q_{k'}^2\right)} + \frac{\pi b}{16 T_k}\right)\\
	&= -\frac{\pi}{2K(k')} \left( i\frac{\theta_1'\left(-2i\pi b T_{k'},q_{k'}^2\right)}{\theta_1\left(-2i\pi b T_{k'},q_{k'}^2\right)} + \frac{b}{2}\right),
	\end{align*}
	where we used that $T_k = K(k')/(4K(k)) = 1/(16 T_{k'})$, see \eqref{eq:nomedef}.
	
	From the definition \eqref{eq:theta1def} with $\sin((2n+1)(-2i\pi bT_{k'})) = \frac{i}{2} ( q_{k'}^{(n+1/2)b} - q_{k'}^{-(n+1/2)b})$ we find the series representations
	\begin{align*}
	i \,\theta_1\left(-2i\pi b T_{k'},q_{k'}^2\right) &= \sum_{n=0}^\infty (-1)^n \left( q_{k'}^{(n+1/2)(2n-b+1)}- q_{k'}^{(n+1/2)(2n+b+1)}\right)\\
	&= q_{k'}^{\frac{1}{2}(1-b)} - q_{k'}^{\frac{1}{2}(1+b)} + O\left( q_{k'}^{\frac{3}{2}(3-b)} \right),\\
	\theta_1'\left(-2i\pi b T_{k'},q_{k'}^2\right) &= \sum_{n=0}^\infty (-1)^n (2n+1)\left( q_{k'}^{(n+1/2)(2n-b+1)}+ q_{k'}^{(n+1/2)(2n+b+1)}\right)\\
	&= q_{k'}^{\frac{1}{2}(1-b)} + q_{k'}^{\frac{1}{2}(1+b)} + O\left( q_{k'}^{\frac{3}{2}(3-b)} \right),
	\end{align*}
	where the asymptotics as $q_{k'}\to 0$ are valid because $0< b < 2$.
	Together with $K(k') = \frac{\pi}{2} + 2\pi q_{k'} + O(q_{k'}^2)$ (see \eqref{eq:Kfromq}), we obtain 
	\begin{align*}
	\frac{\pi }{4K(k)}\frac{\theta_1'\left(\frac{\pi b}{4},\sqrt{q_k}\right)}{\theta_1\left(\frac{\pi b}{4},\sqrt{q_k}\right)} & 
	= \left(1 - \tfrac{b}{2}\right) (1-4 q_{k'}) + 2 q_{k'}^b + O\left(q_{k'}^{2b} + q_{k'}^2\right).
	\end{align*}
	 Hence, for $b\in(0,1)\cup(1,2)$ as $k\to \pm 1$ we find from \eqref{eq:Fexpression} that \footnote{The constant term is exactly the characteristic function of the probability distribution in Corollary \ref{thm:returnangle}.} 
	 \begin{align*}
	 F(t,b) &= \frac{1+(b-2)\tan\left(\frac{\pi b}{4}\right)}{\cos\left(\frac{\pi b}{2}\right)} - 4 \frac{\tan\left(\frac{\pi b}{4}\right)}{\cos\left(\frac{\pi b}{2}\right)} \left( q_{k'}^b + (b-2)q_{k'}\right)+ O\left(q_{k'}^{2b}+ q_{k'}^2\right).
	 \end{align*}
	 Since $q_{k'} = (1-k^2)/16 + O((1-k^2)^2)$, the dominant contribution at the singularity is proportional to $(1-k^2)^b$ and then a transfer theorem \cite[Theorem 3A]{flajolet_singularity_1990} tells us that
	 \begin{equation}\label{eq:Fasympproof}
	 [t^{2\ell}] F(t,b) \sim - 4^{1-2b} \frac{\tan\left(\frac{\pi b}{4}\right)}{\cos\left(\frac{\pi b}{2}\right)\Gamma(-b)}\frac{4^{2\ell}}{\ell^{b+1}}  = \sin^2\left(\frac{\pi b}{4}\right)\frac{\Gamma(1+b)}{\pi}  \frac{4^{2(\ell+1-b)}}{\ell^{b+1}},
	 \end{equation}
	 where we used the reflection formula $\Gamma(1-z)\Gamma(z) = \pi/\sin(\pi z)$.
	 This proves the claimed asymptotics for $b\in (0,1)\cup(1,2)$.
	 
	 Next we look at $F(t,b)$ for $b=0,1,2$.
	 Using \eqref{eq:nomedef} and Legendre's relation \eqref{eq:legendre} each can be expressed  in terms of $K'(k)$, $E'(k)$ and $q_{k'}$, which in turn are analytic in $1-k^2$,
	 \begin{align*}
	 F(t,0) &= 1 - \frac{\pi}{2K(k)} = 1 + \frac{\pi^2}{2K'(k)\log q_{k'}} = 1 + \frac{\pi}{ \log\frac{1-k^2}{16}} + O\left( 1-k^2\right),\\
	 F(t,1) &= 1 - 2 \frac{E(k)}{\pi} = 1 + \frac{2}{\pi^2} (K'(k) - E'(k)) \log q_{k'} - \frac{1}{K'(k)} \\
	 &= 1 - \frac{2}{\pi} + \frac{1}{2\pi} (1-k^2)\log \frac{1-k^2}{16}  + O\left( 1-k^2 \right),\\
	 F(t,2) &= -1 + \frac{2}{K'(k)} + \frac{4}{\pi^2} \left( E'(k) - \frac{1+k^2}{2}K'(k)\right)\log q_{k'} \\
	 &= \frac{4}{\pi} - 1 - \frac{1}{\pi}(1-k^2) - \frac{1}{8\pi} (1-k^2)^2 \log \frac{1-k^2}{16} + O\left( (1-k^2)^2\right).
	 \end{align*}
	 In each case the term containing $\log \frac{1-k^2}{16}$ produces the main contribution to the asymptotics $[t^{2\ell}]F(t,b)$.
	 Using transfer theorems \cite[Theorem 3A \& remarks at the end of Section 3]{flajolet_singularity_1990} we establish that
	 \begin{equation*}
	 [t^{2\ell}]F(t,0) \sim \frac{\pi}{\ell\log^2 \ell}\,4^{2\ell}, \qquad [t^{2\ell}]F(t,1) \sim \frac{1}{2\pi \ell^2}\, 4^{2\ell}, \qquad [t^{2\ell}]F(t,2) \sim \frac{1}{4\pi \ell^3}\, 4^{2\ell}.
	 \end{equation*}
	 Finally, observe that the formula for $b=1$ agrees with \eqref{eq:Fasympproof}, while this is not the case for $b=0$ and $b=2$. 
\end{proof}

\subsection{Excursions restricted to an angular interval}

We turn to the problem of enumerating excursions $w$ with winding angle sequence $(\theta_i^w)_{i=0}^{|w|}$ restricted to lie fully within an interval $I\subset\R$.
For convenience we let $\mathcal{E}' = \{ w\in\mathcal{E} : w_1 = (1,1) \}$ be the set of excursions that leave the origin in a fixed direction.
Then we let $F^{(\alpha,I)}(t)$ be the generating function
\[
F^{(\alpha,I)}(t) = \sum_{w\in\mathcal{E}'} t^{|w|} \one_{\{\theta^w = \alpha\text{ and }\theta_i^w\in I\text{ for }0\leq i\leq |w|\}}.
\]
Notice that with this definition $F^{(\alpha,\R)}(t) = F^{(\alpha)}(t)/4$. 

\begin{theorem}\label{thm:conegenfun}
For $I=(\beta_-,\beta_+)$, $\beta_- \in \frac{\pi}{4}\Z_{<0}$ and $\beta_+ \in \frac{\pi}{4}\Z_{>0}$, and $\alpha\in I\cap \frac{\pi}{2}\Z$ the generating function $F^{(\alpha,I)}(t)$ is given by the finite sum
\begin{equation}\label{eq:conegenfun}
F^{(\alpha,I)}(t)=\frac{\pi}{8\delta} \sum_{\sigma\in(0,\delta)\cap \frac{\pi}{2}\Z}\left( \cos\left( \frac{4\sigma\alpha}{\delta} \right) - \cos\left( \frac{4\sigma(2\beta_+ - \alpha)}{\delta} \right)\right) F\left(t,\frac{4\sigma}{\delta}\right), \qquad \delta \coloneqq 2(\beta_+-\beta_-).
\end{equation}
It is transcendental in $t$ if one of the following conditions is satisfied:
\begin{itemize}
	\item $\beta_+,\beta_-\in \frac{\pi}{2}\Z +\frac{\pi}{4}$,
	\item $\beta_+,\beta_-\in \pi\Z +\frac{\pi}{2}$ and $\alpha \in\pi\Z$.
\end{itemize}
Otherwise it is algebraic in $t$. 
Moreover, its coefficients satisfy the asymptotic estimate
\[
[t^{2\ell}] F^{(\alpha,I)}(t) \sim \left( \cos\left( \frac{2\pi\alpha}{\delta} \right) - \cos\left( \frac{2\pi(2\beta_+ - \alpha)}{\delta} \right)\right) \sin^2\left(\frac{\pi^2}{2\delta}\right) \frac{\Gamma\left(1+\frac{2\pi}{\delta}\right)}{4\delta} \frac{4^{2(\ell+1-2\pi/\delta)}}{\ell^{1+2\pi/\delta}}.
\]
\end{theorem}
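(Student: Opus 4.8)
The plan is to reduce everything to the closed-form characteristic function $F(t,b)$ of Proposition~\ref{thm:excursiongenfun} via a reflection principle followed by a finite Fourier inversion. First I would establish the excursion analogue of Lemma~\ref{thm:reflection}. After deleting the origin, an excursion in $\mathcal{E}'$ lives on the even sublattice, so its winding-angle sequence cannot cross a value in $\frac{\pi}{4}\Z$ without the walk landing on an axis or diagonal through the origin; reflecting the part of the walk after its first exit from $I$ in that axis or diagonal is a sign-reversing involution exactly as in the proof of Lemma~\ref{thm:reflection}. This gives
\[
F^{(\alpha,I)}(t) = \tfrac14\sum_{n=-\infty}^\infty\left(F^{(\alpha+n\delta)}(t) - F^{(2\beta_+-\alpha+n\delta)}(t)\right),
\]
where the factor $\tfrac14$ comes from the rotational symmetry of $\mathcal{E}$ (rotation by $\pi/2$ preserves the winding angle and permutes the four initial steps, so the fixed-direction count is a quarter of $F^{(\alpha')}$). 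Absolute convergence is guaranteed by the bounds established in the proof of Lemma~\ref{thm:Falphacombinatorics}.

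Next I would collapse each infinite sum over the arithmetic progression $\{\alpha'+n\delta\}_n$ into a finite sum. Writing $\alpha'=\frac{\pi}{2}k$, Fourier inversion on $\frac{\pi}{2}\Z$ gives $F^{(\pi k/2)}(t)=\frac14\int_0^4 F(t,b)\,e^{-i\pi bk/2}\,\rmd b$, and since $\delta=\frac{\pi}{2}D$ with $D=2\delta/\pi\in\Z_{>0}$, Poisson summation in $n$ turns $\sum_n F^{(\alpha'+n\delta)}(t)$ into $\frac1D\sum_{m=0}^{D-1}F(t,4m/D)\,e^{-2\pi i mk'/D}$. Substituting $\sigma=\frac{\pi}{2}m$ (so $4m/D=4\sigma/\delta$) and subtracting the two progressions, the $m=0$ term cancels; pairing $m\leftrightarrow D-m$ and using $F(t,4-b)=F(t,b)$ together with $4\alpha,8\beta_+\in2\pi\Z$ makes each summand real, which produces precisely the cosine form \eqref{eq:conegenfun} with overall constant $\frac{1}{4D}=\frac{\pi}{8\delta}$. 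The main care here is justifying the interchange of summation and integration and tracking the exact prefactor and reality of the summand.

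For the algebraicity claim I would invoke Corollary~\ref{thm:Falgebraic}: $F(t,b)$ is algebraic exactly for $b\in\Q\setminus\Z$, whereas the values $b=4\sigma/\delta\in\{1,2,3\}$ that may occur in \eqref{eq:conegenfun} carry the elliptic integrals and are non-algebraic. It therefore suffices to show the coefficient of every such ``bad'' term vanishes. Writing the coefficient via the product-to-sum identity
\[
\cos\!\left(b\alpha\right)-\cos\!\left(b(2\beta_+-\alpha)\right) = -2\sin\!\left(b\beta_+\right)\sin\!\left(b(\alpha-\beta_+)\right), \qquad b=\tfrac{4\sigma}{\delta},
\]
I would check the cases: if $\beta_+-\beta_-\in\frac{\pi}{2}\Z+\frac{\pi}{4}$ then $\delta\in\frac{\pi}{2}(2\Z+1)$, so $4\sigma/\delta\notin\Z$ for $0<\sigma<\delta$ and no bad term occurs; if $\beta_\pm\in\frac{\pi}{2}\Z$ then $\beta_+\in\frac{\pi}{2}\Z$ gives $\sin(b\beta_+)=0$ at the unique bad value $b=2$ when $\beta_+-\beta_-\in\pi\Z+\frac{\pi}{2}$, while $\beta_+-\alpha\in\pi\Z$ forces $\sin(b(\alpha-\beta_+))=0$ for all integer $b$, and $\alpha\in\pi\Z+\frac{\pi}{2}$ (with $\beta_+\in\frac{\pi}{2}\Z$) makes the product vanish at all integers $b$. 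In each case the non-algebraic terms drop out.

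Finally, for the coefficient asymptotics I would apply Lemma~\ref{thm:Fasymptotics}: among the finitely many terms, $[t^{2l}]F(t,4\sigma/\delta)$ decays slowest for $4\sigma/\delta$ closest to $0$ modulo $4$. Assuming $\delta>\pi$, these are $\sigma=\frac{\pi}{2}$, giving $b=2\pi/\delta\in(0,2)$, and its partner $\sigma=\delta-\frac{\pi}{2}$, giving $4-2\pi/\delta$ with identical asymptotics via $F(t,4-b)=F(t,b)$. A short computation using $4\alpha,8\beta_+\in2\pi\Z$ shows these two terms carry the same coefficient $\cos(2\pi\alpha/\delta)-\cos(2\pi(2\beta_+-\alpha)/\delta)$, hence together contribute a factor $2$; inserting the estimate of Lemma~\ref{thm:Fasymptotics} at $b=2\pi/\delta$ and simplifying $\frac{\pi}{8\delta}\cdot2\cdot\frac1\pi=\frac{1}{4\delta}$ yields the stated estimate. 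I expect the genuine subtleties to be the Fourier bookkeeping of the second step and, in this last step, recognising that \emph{two} distinct terms of the finite sum are leading-order — this is what supplies the factor $2$ needed to reach the constant $\Gamma(1+2\pi/\delta)/(4\delta)$ rather than half of it.
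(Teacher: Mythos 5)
Your proposal follows essentially the same route as the paper: the reflection principle for excursions giving the alternating bilateral sum with prefactor $\tfrac14$, a finite Fourier inversion over $\Z/D\Z$ collapsing it to \eqref{eq:conegenfun} (the paper uses the discrete orthogonality relation directly rather than continuous inversion plus Poisson summation, but these are equivalent), the same case analysis showing the coefficients of the non-algebraic terms $F(t,b)$, $b\in\{1,2,3\}$, vanish, and the same identification of $\sigma=\tfrac{\pi}{2}$ and $\sigma=\delta-\tfrac{\pi}{2}$ as the two leading terms supplying the factor $2$. The one point you leave unaddressed is the boundary case $\delta=\pi$ (i.e.\ $I=(-\tfrac{\pi}{4},\tfrac{\pi}{4})$, $\alpha=0$), where the finite sum consists of the single term $\sigma=\tfrac{\pi}{2}$ with $b=2$ and there is no partner to double it; the stated asymptotic formula still holds there, but only because Lemma \ref{thm:Fasymptotics} gives $[t^{2l}]F(t,2)$ an extra factor of two relative to the generic $b\in(0,2)$ formula, a compensation the paper points out explicitly and which your ``assuming $\delta>\pi$'' silently skips. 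Adding that one observation closes the argument.
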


\begin{proof}
	By a reflection principle that is completely analogous to that used in Lemma \ref{thm:reflection} we observe that $F^{(\alpha,I)}(t)$ is given by the sum
	\[
		F^{(\alpha,I)}(t) = \frac{1}{4} \sum_{n=-\infty}^\infty \left(F^{(\alpha+n\delta)}(t) - F^{(2\beta_+-\alpha+n\delta)}(t)\right),  \qquad \delta \coloneqq 2(\beta_+-\beta_-),
	\]
	where the factor $1/4$ is due to the fourfold difference between $\mathcal{E}$ and $\mathcal{E}'$.
	
	Then the discrete Fourier transform 
	\[
	\frac{\pi}{2\delta} \sum_{\sigma\in(0,\delta)\cap \frac{\pi}{2}\Z} \left( e^{-4i\sigma \alpha/\delta} - e^{-4i\sigma (2\beta_+-\alpha)/\delta}\right) e^{4i \sigma \alpha'/\delta} = \one_{\{\alpha-\alpha'\in\delta\Z\}}-\one_{\{2\beta_+-\alpha-\alpha'\in\delta\Z\}}
	\]
	with $\alpha'\in\frac{\pi}{2}\Z$ leads to
	\[
	F^{(\alpha,I)}(t) = \frac{\pi}{8\delta}\sum_{\sigma\in(0,\delta)\cap \frac{\pi}{2}\Z} \left( e^{-4i\sigma \alpha/\delta} - e^{-4i\sigma (2\beta_+-\alpha)/\delta}\right)\sum_{\alpha'\in\frac{\pi}{2}\Z}  F^{(\alpha')}(t)e^{4i\sigma\alpha'/\delta}.
	\]
	Using that the rightmost sum, by definition \eqref{eq:charfun}, equals $F(t,4\sigma/\delta) = F(t,4(\delta-\sigma)/\delta)$, the latter is seen to agree exactly with \eqref{eq:conegenfun}.

	Since $F^{(\alpha,I)}(t)$ is expressed in terms of trigonometric functions at rational angles (meaning rational multiples of $\pi$) and $F(t,b)$ at rational values of $b$, it follows from Corollary \ref{thm:Falgebraic} that $F^{(\alpha,I)}(t)$ is algebraic unless the sum includes one or more $F(t,b)$ at integer values of $b$.
	Note that $F(t,0)$ and $F(t,4)$ cannot occur, while $F(t,3)=F(t,1)$ and both occur with the same prefactor, because the summand in \eqref{eq:conegenfun} is invariant under the replacement $\sigma \to \delta -\sigma$.
	By Lemma \ref{thm:EKindependent} and \eqref{eq:Fexpression}, the series $F(t,1)$ and $F(t,2)$ are algebraically independent, so $F^{(\alpha,I)}(t)$ is transcendental if and only if at least one of $F(t,1)$ and $F(t,2)$ is present with a nonzero prefactor.
	We check this case by case:
	\begin{itemize}
	\item $\beta_+,\beta_-\in \frac{\pi}{2}\Z +\frac{\pi}{4}$: $F^{(\alpha,I)}(t)$ is transcendental because it contains a term with $\sigma = \delta/2$ equal to $(\cos(2\alpha) - \cos(4\beta_+-2\alpha))F(t,2) = 2\cos(2\alpha) F(t,2) = \pm 2F(t,2)$;
	\item $\beta_+-\beta_-\in \frac{\pi}{2}\Z + \frac{\pi}{4}$: $F^{(\alpha,I)}(t)$ is algebraic because it contains no term with $\sigma = \delta/2$ or $\sigma = \delta/4$. 
	\end{itemize} 
	If $\beta_+,\beta_-\in \frac{\pi}{2}\Z$ then the term with $\sigma=\delta/2$ equals $(\cos(2\alpha) - \cos(4\beta_+-2\alpha))F(t,2) = 0$, so we should focus on $\sigma=\delta/4$.
	If it exists, its prefactor equals $\cos(\alpha) - \cos(2\beta_+ - \alpha) = \cos(\alpha)(1-\cos(2\beta_+))$.
	\begin{itemize}
	\item $\beta_+,\beta_-\in\frac{\pi}{2}\Z$, $\beta_+-\beta_- \in \pi\Z + \frac{\pi}{2}$: $F^{(\alpha,I)}(t)$ is algebraic, because it contains no term with $\sigma = \delta/4$;
	\item $\beta_+,\beta_-\in\pi\Z$: $F^{(\alpha,I)}(t)$ is algebraic, because $1-\cos(2\beta_+)=0$;
	\item $\beta_+,\beta_-\in\pi\Z+\frac{\pi}{2}$, $\alpha \in \pi\Z+\frac{\pi}{2}$: $F^{(\alpha,I)}(t)$ is algebraic, because $\cos(\alpha)=0$;
	\item $\beta_+,\beta_-\in\pi\Z+\frac{\pi}{2}$, $\alpha \in \pi\Z$: $F^{(\alpha,I)}(t)$ is transcendental, because $\cos(\alpha)(1-\cos(2\beta_+))=\pm 2$. 
	\end{itemize}
	Since these six cases exhaust all choices of $\beta_-$, $\beta_+$, and $\alpha$, we have proven the criterion in the Theorem statement. 
	
	According to Lemma \ref{thm:Fasymptotics} the asymptotics of $F^{(\alpha,I)}(t)$ is determined by the terms $\sigma = \pi/2$ and $\sigma = \delta - \pi/2$.
	If $\delta > \pi$ then $\pi/2 \neq \delta - \pi/2$ and the corresponding terms are equal and involve $F(t,b)$ at $b = 2\pi/\delta \in (0,2)$.
	If $\delta = \pi$, then there is a single term $\sigma = \pi/2$ that involves $F(t,2)$, but its asymptotic growth is twice larger than what one gets for $F(t,b)$ with $b\in(0,2)$ as $b\to 2$ (see Lemma \ref{thm:Fasymptotics}). 
	Hence, in either case Lemma \ref{thm:Fasymptotics} implies that as $\ell\to\infty$,
	\[
	[t^{2\ell}]F^{(\alpha,I)}(t) \sim \frac{\pi}{4\delta} \left( \cos\left( \frac{2\pi\alpha}{\delta} \right) - \cos\left( \frac{2\pi(2\beta_+ - \alpha)}{\delta} \right)\right) \sin^2\left(\frac{\pi^2}{2\delta}\right) \frac{\Gamma\left(1+\frac{2\pi}{\delta}\right)}{\pi} \frac{4^{2(\ell+1-2\pi/\delta)}}{\ell^{1+2\pi/\delta}},
	\]
	in accordance with the claimed result.
\end{proof}

As a special case we look at excursions that stay in the angular interval $(-\pi/4,\pi/2)$, see Figure \ref{fig:gesselwalk}.

\begin{corollary}[Gessel's lattice path conjecture]\label{thm:gessel}
	The generating function of excursions that stay in the angular interval $(-\pi/4,\pi/2)$ with winding angle $\alpha=0$ is
	\begin{align}
F^{(0,(-\pi/4,\pi/2))}(t) = \frac{1}{4} F\left(t,\frac{4}{3}\right) = \frac{1}{2} \left[ \frac{\sqrt{3}\pi}{2K(4t)}  \frac{\theta_1'\left(\frac{\pi}{3},\sqrt{q_k}\right)}{\theta_1\left(\frac{\pi}{3},\sqrt{q_k}\right)}-1\right]=\sum_{n=0}^\infty t^{2n+2}\,16^n \frac{(5/6)_n(1/2)_n}{(2)_n(5/3)_n}.
	\end{align}
\end{corollary}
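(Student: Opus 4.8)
The first two equalities are immediate specializations of earlier results. Taking $\alpha=0$ and $I=(-\pi/4,\pi/2)$ in Theorem \ref{thm:conegenfun} gives $\delta=2(\pi/2+\pi/4)=3\pi/2$, so the sum in \eqref{eq:conegenfun} runs only over $\sigma\in\{\pi/2,\pi\}$. Both terms carry the same weight $\cos(0)-\cos(4\pi/3)=3/2$, and since $F(t,8/3)=F(t,4-8/3)=F(t,4/3)$ they collapse, leaving $\frac{\pi}{8\delta}\cdot 3\cdot F(t,4/3)=\tfrac14 F(t,4/3)$. For the second equality I would simply substitute $b=4/3$ (so that $\cos(\pi b/2)=-\tfrac12$ and $\tan(\pi b/4)=\sqrt3$) into the $b\in\R\setminus\Z$ formula of Proposition \ref{thm:excursiongenfun}, which reproduces the theta-quotient expression. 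The real content is the third equality, and for this I would route through the algebraic representation \eqref{eq:FfromJacobiZ}.

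The plan is to evaluate \eqref{eq:FfromJacobiZ} at $b=4/3$, i.e.\ at $u=K(k)b/2=2K(k)/3$, and reduce everything to the single quantity $s\coloneqq\sn(2K(k)/3,k)$. Since $3u=2K$ we have $\sn(3u,k)=0$, and the reflection identity $\sn(2K-u,k)=\sn(u,k)$ gives $\sn(4K/3,k)=s$. Plugging $\sn(2u,k)=2s\,\cn(u,k)\dn(u,k)/(1-k^2 s^4)=s$ into the duplication formula yields $\cn(2K/3,k)\dn(2K/3,k)=\tfrac12(1-k^2 s^4)$, and squaring produces the trisection polynomial
\[
k^4 s^8 - 6 k^2 s^4 + 4(1+k^2)s^2 - 3 = 0 .
\]
For the zeta term I would iterate the addition formula \eqref{eq:Zaddformula}: from $Z(2K,k)=0$ and $\sn(3u,k)=0$ one gets $Z(2u,k)=-Z(u,k)$, while duplication gives $Z(2u,k)=2Z(u,k)-k^2 s^2\sn(2u,k)$, and combining these (with $\sn(2u,k)=s$) yields $Z(2K/3,k)=\tfrac13 k^2 s^3$. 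Feeding these evaluations into \eqref{eq:FfromJacobiZ} the square roots cancel, leaving the rational parametric form
\[
F^{(0,(-\pi/4,\pi/2))}(t) = -\tfrac12 + \tfrac{\sqrt3}{4}\Bigl( s^{-1} + \tfrac13 k^2 s^3\Bigr), \qquad k=4t ,
\]
with $s$ algebraic over $k$ through the displayed trisection polynomial.

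From this parametrization I would eliminate $s$ and $k=4t$ by resultants to obtain an explicit polynomial equation over $\Q(t)$ satisfied by $y\coloneqq F^{(0,(-\pi/4,\pi/2))}(t)$; its very algebraicity is already guaranteed by Corollary \ref{thm:Falgebraic}. In parallel, the series $\sum_{n} t^{2n+2}16^n(5/6)_n(1/2)_n/((2)_n(5/3)_n)$ is known to be algebraic \cite{bostan_complete_2010} with an explicitly computable minimal polynomial. The concluding step is to check that the two polynomials cut out the same irreducible curve and that the same branch is selected; the branch is pinned down by matching low-order coefficients, for which the expansion $s^2=\tfrac34+\tfrac{3}{32}k^2+\cdots$ of the trisection polynomial gives $y=t^2+O(t^4)$, agreeing with $t^2+2t^4+11t^6+\cdots$.

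I expect the main obstacle to lie not in the elliptic-function bookkeeping, which becomes elementary once $3u=2K$ is exploited, but in this final identification: matching my minimal polynomial with the comparatively high-degree known equation for Gessel's series, and ensuring throughout that the correct sign of $\cn(2K/3,k)\dn(2K/3,k)$ and the correct real root $s\in(0,1)$ of the trisection polynomial are chosen so that the selected algebraic branch is the combinatorial one. This is a CAS-assisted resultant computation, after which equality of the two power series follows since an algebraic power series is determined by its minimal polynomial together with finitely many initial terms.
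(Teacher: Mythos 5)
Your proposal is correct and follows essentially the same route as the paper: both reduce the third equality to \eqref{eq:FfromJacobiZ} at $u=2K(k)/3$, exploit $\sn(3u,k)=Z(3u,k)=0$ together with the addition formula \eqref{eq:Zaddformula} to evaluate $Z(u,k)$ and $\cn(u,k)\dn(u,k)/\sn(u,k)$ algebraically, and then eliminate down to a polynomial equation over $\Q(t)$ that is matched against the known Bostan--Kauers equation for the hypergeometric series. The only difference is cosmetic --- you parametrize by $s=\sn(2K/3,k)$ via the trisection polynomial $k^4s^8-6k^2s^4+4(1+k^2)s^2-3=0$, whereas the paper uses $x=\dn(2K/3,k)$ with the rational parametrization $k^2=(1-x)(1+x)^3/(1+2x)$ --- and your intermediate evaluations ($Z(u,k)=\tfrac13 k^2 s^3$, $\cn\,\dn=\tfrac12(1-k^2s^4)$, and the resulting expression for $F^{(0,(-\pi/4,\pi/2))}(t)$) all check out.
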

\begin{proof}
	The first two equalities follow from Theorem \ref{thm:conegenfun} and Proposition \ref{thm:excursiongenfun} respectively (using that $F(t,4-b) = F(t,b)$). 
	It remains to show that our generating function reproduces the known formula \cite{kauers_proof_2009,bostan_complete_2010}
	\[
	\sum_{n=0}^\infty t^{2n+2}\,16^n \frac{(5/6)_n(1/2)_n}{(2)_n(5/3)_n} = \frac{1}{2}\left[ {_2F_1}\left(-\frac12,-\frac16;\frac23;(4t)^2\right)-1\right],
	\]
	which we will do by showing they both solve the same algebraic equation (and checking that the first few terms in the expansion agree).
	
	Denoting $y=\frac12 F(t,4/3)+1$ and using \eqref{eq:FfromJacobiZ} we get (with $k=4t$)
	\[
	 y = \sqrt{3} \left( 2Z(u,k) + \frac{\cn\left(u,k\right)\dn\left(u,k\right)}{\sn\left(u,k\right)} \right), \qquad u = \frac{2 K(k)}{3}. 
	\]
	Applying the addition formula \eqref{eq:zetaaddition} to $Z(u+u+u,k)$ twice one finds
	\begin{align*}
	0=Z(3 u,k) &= 3 Z(u,k) - k^2\left( \sn(2u,k) \sn^2(u,k)+\sn(2u,k)\sn(3u,k)\sn(u,k)\right)\\
	&= 3Z(u,k) - k^2 \sn(2u,k) \sn^2(u,k),
	\end{align*}
	where we used that $Z(2K(k),k) = \sn( 2K(k),k)=0$.
	Using the double argument formula \cite[16.18.1]{abramowitz_handbook_1964}
	\[
	\sn(2u,k) = \frac{2\sn(u,k)\cn(u,k)\dn(u,k)}{1-k^2 \sn^4(u,k)}
	\]
	as well as the quadratic relations \eqref{eq:quadraticrelations}, one may express $y$ in terms of $x\coloneqq \dn(u,k)$ as
	\begin{equation}\label{eq:gesselfexpr}
	y = -x\,\frac{(1-x^2)^2 +3k^2}{(1-x^2)^2-k^2}\, \sqrt{\frac{x^2+k^2-1}{3(1-x^2)}}. 
	\end{equation}
	Using the various addition theorems applied to $\sn(u+u+u,k)=0$ and rewriting in terms of $x=\dn(u,k)$ one finds after a slightly tedious calculation that $x$ solves
	\[
	k^2 = \frac{(1-x)(1+x)^3}{1+2x}.
	\]
	Eliminating $k$ from \eqref{eq:gesselfexpr}, $y$ is then seen to be given by
	\[
	y=\frac{3-(1-x)^2}{\sqrt{3(1+2x)}}.
	\]
	Finally one may check that the last two displays (with $k^2=16t^2$) parametrize the curve
	\begin{align*}
	27 y^8&-\left(4608 t^4+4032 t^2+18\right)y^4+\left(-32768
	t^6+67584 t^4+4224 t^2-8\right) y^2\\
	&	-65536 t^8-114688 t^6-50688 t^4-448 t^2-1=0,
	\end{align*}
	which is equivalent to the polynomial in \cite[Corollary 2]{bostan_complete_2010} (after substituting $\mathtt{t}\to t^2$, $\mathtt{T} \to (y-1)/(2t^2)$).
\end{proof}

\section{Winding angle distribution of an unconstrained random walk}\label{sec:unconstrained}

Let $(W_i)_i$ be the simple random walk on $\Z^2$ started at the origin.
In this section we use the results of Theorem \ref{thm:mainresult} to determine statistics of the winding angle $\theta_j^z$ of $(W_i)_i$ up to time $j$ around a lattice point $z\in\Z^2$ or a dual lattice point $z\in(\Z+1/2)^2$.
In the case that $(W_i)_i$ hits the lattice point $z$ at time $i$, we set $\theta_i^z = \theta_{i-1}^z$ and $\theta_j^z = \infty$ for $j > i$.
From a physicist's point of view, we may regard the $z\in\Z^2$ as an obstacle with \emph{absorbing} boundary conditions and $z\in(\Z+1/2)^2$ as one with \emph{reflecting} boundary conditions.

As alluded to in the introduction, the asymptotics of $\theta_j^z$ in the limit $j\to\infty$ has been well-studied in the literature. 
If $z\in (\Z+1/2)^2$ then one has the convergence in distribution to the \emph{hyperbolic secant distribution} \cite{rudnick_winding_1987,belisle_windings_1989,belisle_winding_1991,shi_windings_1998}
\[
\prob\left[
\frac{2\theta_j^z}{\log j} \in (a,b) \right] \xrightarrow{j\to\infty}  \int_a^b \frac{1}{2}\sech\left(\frac{\pi x}{2}\right) \rmd x. 
\]
Otherwise if $z\in \Z^2$, then conditionally on $\theta_j^z<\infty$ one has the convergence \cite{rudnick_winding_1987} (see also \cite{drossel_winding_1996})
\[
\prob\left[
\frac{2\theta_j^z}{\log j} \in (a,b) \,\middle| \,\theta_j^z < \infty \right] \xrightarrow{j\to\infty}  \int_a^b \frac{\pi}{4}\sech^2\left(\frac{\pi x}{2}\right) \rmd x. 
\]

As we will see now, these asymptotic laws have discrete counterparts if the combinatorics is setup appropriately.
First of all we restrict to winding angles around points that are close to the origin.
To be precise, we consider the lattice point $z^\bullet\coloneqq(0,0)$ and the dual lattice point $z^\square \coloneqq(-1/2,-1/2)$ (or any of the other dual lattice points at distance $1/\sqrt{2}$ from the origin) and we denote the corresponding winding angles by $\theta^\bullet_{j}$ and $\theta^\square_{j}$ respectively.
Moreover, instead of considering the winding angle at integer time $j$, we look at angles $\theta^\bullet_{j+1/2}$ and $\theta^\square_{j+1/2}$ at half-integer time $j+1/2$ (see Figure \ref{fig:freewinding}), which prevents them from taking certain unwanted exact multiples of $\pi/4$.
Finally, we replace the fixed time by a geometric random variable.

\begin{theorem}[Hyperbolic secant laws]\label{thm:hypsecant}
If $\zeta_k$ is a geometric random variable with parameter $k\in(0,1)$ independent of the random walk, i.e. $\prob[ \zeta_k = j ] = k^j(1-k)$ for $j\geq 0$,
then 
\begin{align*}
\prob\big[ \theta^\square_{\zeta_k+1/2} \in (\alpha-{\textstyle\frac{\pi}{2}}, \alpha+{\textstyle\frac{\pi}{2}}) \big] &= c_k \sech\left(4T_k\alpha\right) + {\textstyle\frac{k-1}{k}}\one_{\{\alpha=0\}}&&\text{for }\alpha\in{\textstyle\frac{\pi}{2}}\Z,\\
\prob\big[ \theta^{\bullet}_{\zeta_k+1/2} \in (\alpha-{\textstyle\frac{\pi}{2}}, \alpha+{\textstyle\frac{\pi}{2}})\big] &= C_k  \sech\left(4T_k(\alpha-{\textstyle\frac{\pi}{4}})\right)\sech\left(4T_k(\alpha+{\textstyle\frac{\pi}{4}})\right) &&\text{for }\alpha\in{\textstyle\frac{\pi}{2}\Z + \frac{\pi}{4}}, 
\end{align*} 
where $c_k = \pi/(2kK(k))$, $C_k = \pi\sinh\left(2T_k\pi\right)/(2K(k))$ and as usual $T_k = K(k')/(4K(k))$.
\end{theorem}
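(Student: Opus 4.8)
The plan is to reduce both statements to the enumeration of simple diagonal walks winding around the origin, and then to evaluate the resulting generating functions spectrally in the basis $(f_m)$ of Proposition~\ref{thm:Jdiagonalization}.

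First I would pass to diagonal coordinates: writing $(U_i,V_i)=(X_i+Y_i,X_i-Y_i)$ turns the rectilinear walk $(W_i)=(X_i,Y_i)$ into a simple diagonal walk, i.e.\ a pair of independent $\pm1$ walks, each rectilinear step of probability $\tfrac14$ becoming one of the four diagonal steps. Under this map $z^\square=(-\tfrac12,-\tfrac12)$ becomes $(-1,0)$ while $z^\bullet$ stays at the origin, so after a translation by $(1,0)$ the angle $\theta^\square$ is the winding around the origin of a diagonal walk started at $(1,0)$ on the odd sublattice and \emph{avoiding} the origin (the rectilinear walk never meets a dual vertex), whereas $\theta^\bullet$ is the winding of a diagonal walk started at the origin on the even sublattice with the absorbing convention. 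The half-integer time is precisely what makes the relevant endpoint the midpoint of a diagonal edge, hence a dual vertex; since dual vertices never lie on the coordinate axes their winding is never a multiple of $\tfrac{\pi}{2}$, and I would check that the open intervals $(\alpha-\tfrac\pi2,\alpha+\tfrac\pi2)$ partition the admissible winding values, so that the events are disjoint and exhaustive. Since $\prob[\zeta_k=j]=k^j(1-k)$ and a walk of length $j$ has probability $4^{-j}$, with $k=4t$ each probability equals $(1-k)$ times a generating function $\sum_w t^{|w|}$ over exactly these walks whose winding at the terminal midpoint rounds to $\alpha$, so it suffices to compute these winding generating functions.

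For $\theta^\square$ I would split each walk at its \emph{last} visit to a coordinate axis. The initial part is an arbitrary diagonal walk from $(1,0)$ to an axis point at distance $l$ with winding exactly $\alpha$, encoded by $\DirProd{e_l}{\mathbf{Y}_k^{(\alpha)}e_1}$ through Theorem~\ref{thm:mainresult}(i); the final part stays strictly inside the width-$\pi$ sector centred on $\alpha$ and ends, after one half-step, at a dual midpoint. Summing the final parts over all free endpoints packages them into a boundary functional, and — exactly as in the excursion computation \eqref{eq:falternating}, where $f_m$ is evaluated through $v_{k_1}'$ on the unit circle — this terminal half-step lives at imaginary part $T_k$ inside the rectangle $H_k$ and therefore supplies the half-integer powers $q_k^{\pm m/2}$. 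Writing the result as $\DirProd{v^\square}{\mathbf{Y}_k^{(\alpha)}e_1}$ and expanding in $(f_m)$, two facts make the sum collapse: differentiating \eqref{eq:fmbasis} at $0$ gives $\DirProd{e_1}{f_m}=-2\pi m\,v_{k_1}'(0)\sin(\pi m/2)$, which vanishes unless $m$ is odd and then equals $(-1)^{(m-1)/2}$ up to a constant, while the half-step functional contributes $\DirProd{v^\square}{f_m}\propto m\,(q_k^{-m}-q_k^{m})$, cancelling $\|f_m\|_\Dir^2=\tfrac{m}{4}(q_k^{-m}-q_k^{m})$. With the eigenvalue $\mathbf{Y}_k^{(\alpha)}f_m=\tfrac{2K(k)}{\pi}\tfrac1m q_k^{m|\alpha|/\pi}f_m$ and $\alpha=\tfrac{\pi}{2}m_0$, the surviving odd-$m$ terms form the alternating geometric series $\sum_{s\ge0}(-1)^s q_k^{|m_0|(2s+1)/2}=\frac{q_k^{|m_0|/2}}{1+q_k^{|m_0|}}=\tfrac12\sech(2\pi|m_0|T_k)=\tfrac12\sech(4T_k\alpha)$, using $q_k=e^{-4\pi T_k}$. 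Multiplying by $(1-k)$ and reducing the prefactor with the relations between $K(k)$, $q_k$ and $T_k$ yields $c_k\sech(4T_k\alpha)$.

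For $\theta^\bullet$ the walk is pinned at the origin and may not return, so the initial part is an excursion-type object governed by Lemma~\ref{thm:Falphacombinatorics} and Proposition~\ref{thm:excursiongenfun} rather than by $\mathbf{Y}_k^{(\alpha)}e_1$; now there is a half-step at the start as well as at the end, and the two boundaries of the width-$\pi$ sector sit at the consecutive multiples $\alpha\mp\tfrac\pi4\in\tfrac\pi2\Z$, so the same collapse produces the product $\sech(4T_k(\alpha-\tfrac\pi4))\sech(4T_k(\alpha+\tfrac\pi4))$ together with the constant $C_k$. At $\alpha=0$ the geometric series degenerates to the divergent $\sum_{s\ge0}(-1)^s$; regularising it — equivalently isolating the $j=0$ (zero-winding) contribution — accounts for the extra term $\tfrac{k-1}{k}\one_{\{\alpha=0\}}$. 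I expect the genuine obstacle to be the final-survival factor of the previous paragraph: identifying the boundary functional $v^\square$ (and its $\theta^\bullet$ analogue) precisely enough to \emph{prove} $\DirProd{v^\square}{f_m}\propto m(q_k^{-m}-q_k^{m})$, i.e.\ showing rigorously that summing the half-plane (resp.\ quadrant) survival pieces over their free endpoints, together with the terminal half-step, amounts to evaluating $f_m$ at the appropriate corner of $H_k$; the derivation of the constants $c_k,C_k$ and the $\alpha=0$ regularisation is then routine elliptic-integral bookkeeping.
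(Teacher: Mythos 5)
Your overall strategy --- decompose at the last visit to an axis, feed the initial piece into Theorem \ref{thm:mainresult}(i), pair the free tail with a boundary functional evaluated on the basis $(f_m)$, and let the surviving odd-$m$ terms sum to a $\sech$ --- is indeed the strategy of the paper. But two of the steps you describe do not work as stated. First, the decomposition ``(walk from $(1,0)$ to an axis with winding exactly $\alpha$) $\times$ (tail confined to the width-$\pi$ sector centred on $\alpha$)'' is not a bijection with the event $\{\theta^\square_{\zeta_k+1/2}\in(\alpha-\frac{\pi}{2},\alpha+\frac{\pi}{2})\}$: since the terminal midpoint winding is an odd multiple of $\pi/4$, the last axis visit may have winding $\alpha-\frac{\pi}{2}$ or $\alpha+\frac{\pi}{2}$ rather than $\alpha$ (and in the first case the walk need never attain winding $\alpha$ at all), so the operator that actually appears is not $\mathbf{Y}^{(\alpha)}_k$ alone. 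The paper instead works with the two half-windows $(\alpha-\frac{\pi}{2},\alpha)$ and $(\alpha,\alpha+\frac{\pi}{2})$ separately: each walk there splits at its last axis visit into a piece counted by $\mathbf{Y}^{(\alpha\mp\pi/4-\pi/4)}_k+\mathbf{Y}^{(\alpha\mp\pi/4+\pi/4)}_k$, a tail confined to a single open \emph{quadrant} (the set $\mathcal{C}_n$), and one final step.

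Second, the quantity you flag as ``the genuine obstacle'' --- the pairing of the tail functional with $f_m$ --- is exactly the content of the paper's Lemma \ref{thm:arbitraryendpointsum}, and its value is $\sum_n(\tfrac12+C_n(t))\DirProd{e_n}{f_m}\propto(-1)^m\,m(q_k^{-m}-q_k^m)/(q_k^{m/4}+q_k^{-m/4})^2$, \emph{not} the $\propto m(q_k^{-m}-q_k^m)$ you posit; the extra factor $(q_k^{m/4}+q_k^{-m/4})^{-2}$ is cancelled only after summing the eigenvalues $q_k^{m|\alpha\pm\pi/4\pm\pi/4|/\pi}$ over the admissible windings of the last axis visit (a cancellation that moreover fails at $\alpha=0$, which is why the paper fixes that case by the normalisation $\sum_{\alpha\in\pi\Z}\sech(4T_k\alpha)=2K(k)/\pi$ rather than by regularising a divergent alternating series). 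The lemma itself is proved by a separate decomposition --- all unconstrained walks from $(p,0)$ split into those hitting the vertical axis, those hitting the origin first, and those avoiding the vertical axis, the last class being $\mathbf{B}_k$ composed with the quadrant tails --- and none of this is supplied or correctly anticipated by your sketch. The same missing input affects your $\theta^\bullet$ paragraph, where the product of $\sech$'s arises from the explicit difference $\frac{1}{1+q_k^{1/2+2\alpha/\pi}}-\frac{1}{1+q_k^{-1/2+2\alpha/\pi}}$ after the alternating-starting-point sum of Lemma \ref{thm:Falphacombinatorics}, not merely from ``the two boundaries sitting at $\alpha\mp\frac{\pi}{4}$''. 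So the proposal identifies the right architecture but leaves a genuine gap at its load-bearing step.
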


A consequence of these hyperbolic secant laws is the following probabilistic interpretation of the Jacobi elliptic functions $\cn$ and $\dn$.

\begin{corollary}[Jacobi elliptic functions as characteristic functions]\label{thm:charfuns}
Let $\theta^\square_{j+1/2}$ be the winding angle around $(-1/2,-1/2)$ as in Theorem \ref{thm:hypsecant} with the convention $\theta^\square_{-1/2}=0$.
If we denote by $\{\cdot\}_{A} : \R \to A$ rounding to the closest element of $A\subset\R$ (conflicts do not arise here) then we have the characteristic functions
\begin{alignat}{2}
\expec \exp\Big[i b \{ &\theta^\square_{\zeta_k-1/2} \}_{\pi\Z}&&\Big] = \dn(K(k) b, k), \\
\expec \exp\Big[i b \{ &\theta^\square_{\zeta_k+1/2} \}_{\pi\Z+\frac{\pi}{2}}&&\Big] = \cn(K(k) b, k),
\end{alignat} 
where $\cn(\cdot,k)$ and $\dn(\cdot, k)$ are Jacobi elliptic functions with modulus $k$.
\end{corollary}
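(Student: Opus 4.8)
The plan is to expand each characteristic function as a sum over the rounded values, insert the explicit probabilities from Theorem \ref{thm:hypsecant}, and recognise the resulting $\sech$-weighted series as the standard Fourier expansion of the relevant Jacobi elliptic function. I would begin with the $\cn$ identity, which is the more direct of the two. Rounding $\theta^\square_{\zeta_k+1/2}$ to the nearest element of $\pi\Z+\frac{\pi}{2}$ yields $\alpha$ exactly when $\theta^\square_{\zeta_k+1/2}\in(\alpha-\frac{\pi}{2},\alpha+\frac{\pi}{2})$, so
\[
\expec \exp\big[ib\{\theta^\square_{\zeta_k+1/2}\}_{\pi\Z+\frac{\pi}{2}}\big]=\sum_{\alpha\in\pi\Z+\frac{\pi}{2}} e^{ib\alpha}\,\prob\big[\theta^\square_{\zeta_k+1/2}\in(\alpha-{\textstyle\frac{\pi}{2}},\alpha+{\textstyle\frac{\pi}{2}})\big].
\]
Since no $\alpha\in\pi\Z+\frac{\pi}{2}$ equals $0$, the indicator correction in Theorem \ref{thm:hypsecant} never appears and each summand is $c_k e^{ib\alpha}\sech(4T_k\alpha)$. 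Writing $\alpha=\frac{\pi}{2}(2n+1)$, folding $\pm\alpha$ into cosines, and using $e^{-2\pi T_k}=q_k^{1/2}$ to turn $\sech$ into $2q_k^{(2n+1)/2}/(1+q_k^{2n+1})$, the sum collapses to a constant multiple of $\sum_{n\geq 0} \frac{q_k^{n+1/2}}{1+q_k^{2n+1}}\cos(\frac{(2n+1)\pi b}{2})$, which is precisely the Fourier series of $\cn(K(k)b,k)$ once one checks that the prefactor $4c_k$ equals $2\pi/(kK(k))$.

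For the $\dn$ identity the extra ingredient is the half-integer shift from time $\zeta_k+\frac12$ to $\zeta_k-\frac12$. Conditioning on the geometric variable and using the convention $\theta^\square_{-1/2}=0$, I would first establish the elementary relation
\[
\prob\big[\theta^\square_{\zeta_k-1/2}\in A\big]=(1-k)\,\one_{\{0\in A\}}+k\,\prob\big[\theta^\square_{\zeta_k+1/2}\in A\big]
\]
by splitting off the atom $\zeta_k=0$ and re-indexing the remaining sum $j\mapsto j-1$, which reproduces a geometric average at shifted time and contributes the factor $k$. The decisive point is that, on taking $A=(\alpha-\frac{\pi}{2},\alpha+\frac{\pi}{2})$ with $\alpha\in\pi\Z$, the extra mass $(1-k)$ at $\alpha=0$ cancels exactly against the $\frac{k-1}{k}$ term in Theorem \ref{thm:hypsecant}, leaving the single clean law
\[
\prob\big[\theta^\square_{\zeta_k-1/2}\in(\alpha-{\textstyle\frac{\pi}{2}},\alpha+{\textstyle\frac{\pi}{2}})\big]=\frac{\pi}{2K(k)}\sech(4T_k\alpha)\qquad\text{for all }\alpha\in\pi\Z.
\]

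With this uniform law I would expand the second characteristic function as $\sum_{\alpha\in\pi\Z} e^{ib\alpha}\frac{\pi}{2K(k)}\sech(4T_k\alpha)$, set $\alpha=\pi m$, use $e^{-4\pi T_k}=q_k$ to write $\sech(4\pi T_k m)=2q_k^{|m|}/(1+q_k^{2|m|})$, and fold $\pm m$ into cosines, arriving at $\frac{\pi}{2K(k)}+\frac{2\pi}{K(k)}\sum_{m\geq 1}\frac{q_k^m}{1+q_k^{2m}}\cos(\pi b m)$, which is the Fourier expansion of $\dn(K(k)b,k)$. Throughout, absolute convergence of all series (hence the legitimacy of interchanging the expectation with the sum and of the rearrangements) follows from the exponential decay $\sech(4T_k\alpha)=O(q_k^{|\alpha|/\pi})$. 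I expect the main obstacle to be purely bookkeeping: correctly matching the probabilistic normalisations $T_k$ and $c_k$ to the nome and quarter-period conventions in the standard $\cn$ and $\dn$ Fourier series (which I would quote from \cite{abramowitz_handbook_1964}), together with verifying the shift relation and the cancellation of the $\alpha=0$ atom that makes the $\dn$ law uniform.
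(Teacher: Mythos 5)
Your proposal is correct and follows essentially the same route as the paper: the shift relation $\prob[\theta^\square_{\zeta_k-1/2}\in A]=(1-k)\one_{\{0\in A\}}+k\,\prob[\theta^\square_{\zeta_k+1/2}\in A]$ with the cancellation of the atom at $0$ yielding the uniform $\sech$ law, followed by matching the resulting $q_k$-series against the classical Fourier expansions of $\dn$ and $\cn$ in \cite[16.23.2 \& 16.23.3]{abramowitz_handbook_1964}. All the normalisations ($4c_k=2\pi/(kK(k))$, $e^{-4\pi T_k}=q_k$) check out against the paper's displayed sums.
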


Even though Theorem \ref{thm:hypsecant} and Corollary \ref{thm:charfuns} are stated for simple walks on the square lattice, the proofs that follow deal exclusively with simple diagonal walks. 
In order to approach the problem we require a new building block, in the sense of Section \ref{sec:walksaxis}, that involves walks that start on an axis but end at a general point.
In particular we will consider the set $\mathcal{C}_n$ of non-empty diagonal walks $w$ starting at $(n,0)$ and staying strictly inside the positive quadrant, i.e. $w_i \in \Z_{>0}^2$ for $1\leq i\leq |w|$, with generating function $C_n(t)$.

\begin{lemma}\label{thm:arbitraryendpointsum}
	For any $m>0$, $C_n(t)$ satisfies
	\[
	\sum_{n=1}^\infty \left(\frac{1}{2} + C_n(t)\right) \DirProd{e_n}{f_m} = (-1)^m\frac{\pi}{8 (1-k)K(k)} \, \frac{m(q_k^{-m}-q_k^m)}{(q_k^{m/4}+q_k^{-m/4})^2}.
	\]
\end{lemma}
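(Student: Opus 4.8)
The plan is to turn the left-hand side into an explicit contour integral against $f_m$ and to evaluate it by residues using the uniformisation $v_{k_1}$, in the same spirit as the proofs of Lemma \ref{thm:bilateral} and Proposition \ref{thm:Bdiagonalization}. First I would split a diagonal walk $w\in\mathcal{C}_n$ into its two coordinate walks, which are independent simple $\pm1$ walks on $\Z$ of the common length $N=|w|$: the first runs from $n$ and stays $\geq1$, while the second runs from $0$ with a forced first step to $1$ and stays $\geq1$ afterwards. Writing $X_n(N)$ and $X_1(N-1)$ for the corresponding free-endpoint counts (obtained from the ballot/reflection principle exactly as in the proof of Lemma \ref{thm:RJoperator}), one gets $C_n(t)=\sum_{N\geq1}t^N X_n(N)X_1(N-1)$, and summing the reflected binomials over the free endpoint gives the bilateral representation $X_a(N)=\frac{1}{2\pi i}\oint (z+z^{-1})^N\frac{z^a-z^{-a}}{z-1}\frac{\rmd z}{z}$. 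Since $\DirProd{e_n}{f_m}=n\,[z^n]f_m$ and $f_m$ is analytic on $|z|<1/\sqrt{k_1}$ (a disc of radius $>1$), the constant term contributes $\tfrac12\sum_n n[z^n]f_m=\tfrac12\,f_m'(1)$.

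Next I would perform the summation over the starting point $n$ before touching the internal $y$-coordinate. Multiplying the representation of $C_n$ by $\DirProd{e_n}{f_m}$ and summing, the two reflected monomials combine into
\[
\sum_{n\geq1}\DirProd{e_n}{f_m}\,(z^n-z^{-n})=z\,f_m'(z)-z^{-1}f_m'(1/z).
\]
Inserting the resolvent $\sum_{N}t^N(z+z^{-1})^N(\zeta+\zeta^{-1})^{N-1}=\frac{t(z+z^{-1})}{1-t(z+z^{-1})(\zeta+\zeta^{-1})}$ that enforces the common length, the whole left-hand side becomes a double contour integral whose only moving ingredient is the kernel $1-t(z+z^{-1})(\zeta+\zeta^{-1})$. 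The inner $\zeta$-integral then reduces to the contributions of the kernel root $\zeta_-$ inside the disc (the apparent pole at $\zeta=1$ is cancelled by the factor $\zeta-\zeta^{-1}$), and by the elementary identity $\psi_k(x)+\psi_k(x)^{-1}=2/(\sqrt{k}\,x)$ this root is a $\psi_k$-image. One is left with a single $z$-integral of the schematic form $\oint \frac{t(z+z^{-1})\,[z f_m'(z)-z^{-1}f_m'(1/z)]}{z-1}\,(\text{explicit})\,\rmd z$.

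I would then evaluate this integral by residues, transporting everything to the rectangle via $v_{k_1}$ as in Proposition \ref{thm:Bdiagonalization}. The relevant special values are $v_{k_1}(i)=iT_k$, $v_{k_1}'(i)=1/(4K(k))$, and $v_k(i)=v_{k_1}(\psi_k(i))=iT_k/2$ (from Lemma \ref{lem:mapping}); these turn the cosines in \eqref{eq:fmbasis} into hyperbolic functions, producing $\cosh(\pi m T_k)=\tfrac12(q_k^{m/4}+q_k^{-m/4})$ from the level $v_{k_1}=iT_k/2$ (the $\psi_k(\disc)$ edge) and $\sinh(4\pi m T_k)=\tfrac12(q_k^{-m}-q_k^{m})$ from the unit circle $v_{k_1}=iT_k$, while the quarter-period shift $+1/4$ together with the evaluation at $z=i$ supplies the sign $(-1)^m$. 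I expect the prefactor $1/(1-k)$ to track the free summation over the walk length encoded in the resolvent (the kernel degenerating to $1-4t=1-k$ in the dominant endpoint regime $z,\zeta\to1$). Collecting these factors should reproduce $(-1)^m\frac{\pi}{8(1-k)K(k)}\frac{m(q_k^{-m}-q_k^m)}{(q_k^{m/4}+q_k^{-m/4})^2}$.

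The main obstacle will be bookkeeping rather than any single hard idea. Concretely: (i) the coordinate splitting produces a genuinely two-dimensional Hadamard coupling, so the double integral and the choice of contours must be handled carefully, isolating exactly which poles are enclosed; (ii) a scalar constant — the generating function of the quadrant walks issued from the interior point $(1,1)$ — will appear and must be evaluated in closed form and shown to combine with the boundary residue at $z=1$, where, tellingly, $\zeta_-=\sqrt{k_1}$ sits precisely at the branch point of $v_{k_1}$ coming from \eqref{eq:vkmap}; and (iii) the squared denominator $(q_k^{m/4}+q_k^{-m/4})^2$ must be tracked to its source, which I anticipate to be the level $v_{k_1}=iT_k/2$ entering twice — once through the location of the kernel root $\zeta_-$ and once through the derivative of the kernel at $\zeta_-$. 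As consistency checks I would verify the leading $q_k\to0$ behaviour and match the first few coefficients of $\sum_n(\tfrac12+C_n(t))\DirProd{e_n}{f_m}$ computed directly from the shortest admissible walks.
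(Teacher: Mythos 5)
Your route is genuinely different from the paper's, and most of its structural ingredients are sound. The paper never touches $\mathcal{C}_n$ directly with contour integrals: it takes the set of \emph{all} (possibly empty) diagonal walks started at $(p,0)$, whose generating function is $1/(1-k)$, and decomposes it in two ways --- once by the first visit to the vertical axis or to the origin (an inclusion--exclusion in terms of $\mathbf{J}_k$, using the alternating-sum trick of Lemma \ref{thm:Falphacombinatorics} for the origin visits), and once by the last visit to the horizontal axis (which produces exactly $\sum_n(1+2C_n)\DirProd{e_n}{\mathbf{B}_k e_p}$). Equating the two and applying the already-established diagonalizations of $\mathbf{J}_k$ and $\mathbf{B}_k$ reduces everything to the elementary evaluations $f_m(1)$ and $f_m(\pm i)$; this is also where the factor $1/(1-k)$ and the sign $(-1)^m$ enter transparently. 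Your plan instead builds the answer from scratch via a Hadamard coupling of the two coordinate walks. The combinatorial setup is correct ($C_n(t)=\sum_N t^N X_n(N)X_1(N-1)$, and the bilateral integral for $X_a(N)$ checks out since $(z^a-z^{-a})/(z-1)$ is a Laurent polynomial), and your key observation that the kernel root is $\zeta_-=\psi_k\bigl(\sqrt{k}(z+1/z)/2\bigr)$ is exactly right; combined with $v_{k_1}\circ\psi_k=v_k$ from Lemma \ref{lem:mapping} this does make the remaining $z$-integral tractable in the $v_{k_1}$-coordinate. What your approach buys is independence from the decomposition trick; what it costs is a substantially heavier residue computation that the paper avoids entirely.

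That said, as written the proposal stops short of a proof: the decisive single-variable integral is never evaluated, and the two places where you explain the final answer by ``expectation'' are the two places where your heuristics are actually off. The factor $1/(1-k)$ does not arise from the kernel degenerating at $z=\zeta=1$: after the inner residue computation the apparent pole at $z=1$ is cancelled by the vanishing of $zf_m'(z)-z^{-1}f_m'(1/z)$ there, and the $\zeta=0$ residue contributes a constant $-1$, so there is no pole of the integrand at the ``dominant endpoint regime'' at all --- in the paper the $1/(1-k)$ is simply the generating function of unconstrained walks and appears before any analysis is done. Likewise, in the paper's computation the denominator $(q_k^{m/4}+q_k^{-m/4})^2$ is an algebraic artifact of multiplying the $\mathbf{B}_k$-eigenvalue $\frac{1-q_k^m}{1+q_k^m}$ against $(q_k^{m/4}-q_k^{-m/4})^2=(-1)^m\cdot 2(\cosh(2\pi mT_k)-1)$ coming from $f_m(1)$ and $f_m(\pm i)$, not of ``the level $iT_k/2$ entering twice''. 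So the programme is viable, but you would still need to carry out the $z$-integral honestly (tracking the $\zeta=0$ term, the contribution of $f_m'(1/z)$ from inside $|z|\le\sqrt{k_1}$, and the corner value $v_{k_1}(1)=1/4\pm iT_k$) before the identity is established; a shorter path is the paper's double decomposition, which requires no new analytic input beyond Propositions \ref{thm:Jdiagonalization} and \ref{thm:Bdiagonalization}.
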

\begin{proof}
	Let us consider the set of all (possibly empty) diagonal walks starting at $(p,0)$ for $p>0$, which has generating function $1/(1-k)$.
	Among these the walks that visit the vertical axis for the first time at $(\pm \ell, 0)$ for $\ell>0$ have generating function $\frac{2}{1-k} J_{\ell,p}$ (see Figure \ref{fig:cdecomposition1}).
	On the other hand, the walks that first visit the origin have generating function that can be expressed as the alternating sum $\frac{2}{1-k}\sum_{\ell=1}^\infty (-1)^{\ell+1} J_{2\ell,p}$ using an argument analogous to the one in the proof of Lemma \ref{thm:Falphacombinatorics}. 
	Hence, the remaining walks, those starting at $(p,0)$ and avoiding the vertical axis, have generating function
	\[
	\frac{1}{1-k} - \frac{2}{1-k} \sum_{\ell=1}^\infty J_{\ell,p}(t) - \frac{2}{1-k}\sum_{\ell=1}^\infty (-1)^{\ell+1} J_{2\ell,p} = \frac{1}{1-k} \sum_{\ell=1}^\infty\frac{1}{\ell}\DirProd{e_\ell}{\left[I - 2\left(1-\cos\frac{\pi \ell}{2}\right)\mathbf{J}_k\right]e_p}\!.
	\]	 
	\begin{figure}
	\centering
	\begin{subfigure}[c]{0.35\textwidth}
		\centering
		\includegraphics[height=.75\linewidth]{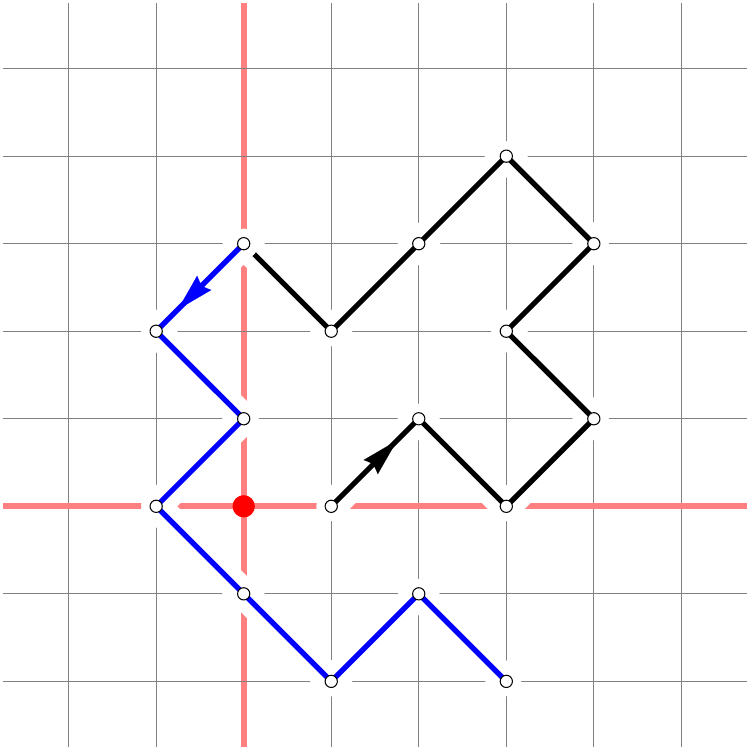}
		\caption{}\label{fig:cdecomposition1}
	\end{subfigure}%
	\begin{subfigure}[c]{0.35\textwidth}
		\centering
		\includegraphics[height=.75\linewidth]{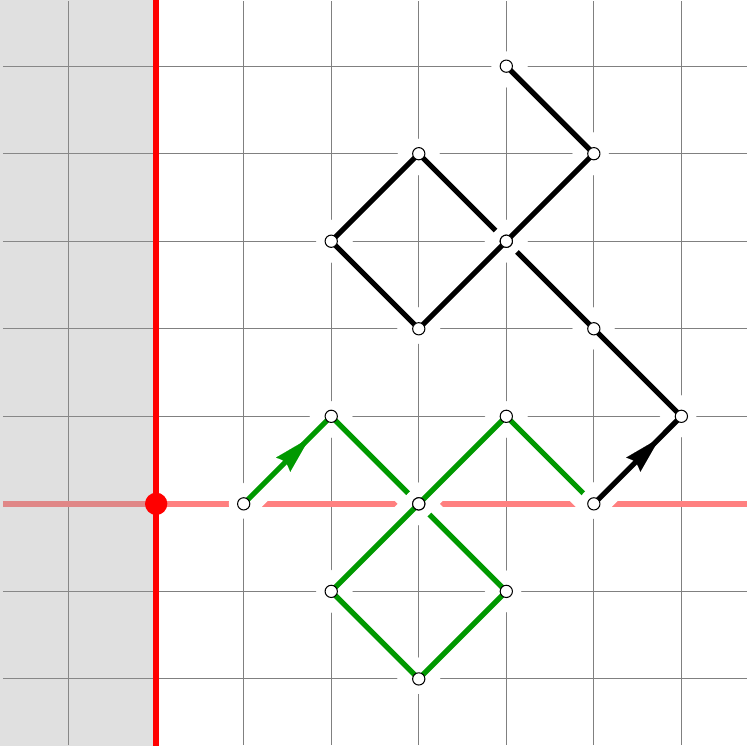}
		\caption{}\label{fig:cdecomposition2}
	\end{subfigure}%
	\caption{(a) An unconstrained walk starting at $(1,0)$ that visits the vertical axis decomposes into a walk in $\mathcal{J}_{3,1}$ (in black) and another unconstrained walk (in blue). (b) A walk starting at $(1,0)$ and avoiding the vertical axis decomposes into a walk in $\mathcal{B}_{5,1}$ (in green) and a walk in $\mathcal{C}_5$ (in black). }\label{fig:cdecomposition}
	\end{figure}%
	By further decomposing such walks at their last intersection with the horizontal axis (see Figure \ref{fig:cdecomposition2}), we find that the latter generating function equals
	\[
	\sum_{n=1}^\infty (1+2C_n(k)) \DirProd{e_n}{\mathbf{B}_ke_p},
	\]
	where the addition of $1$ in parentheses takes into account the walks ending on the horizontal axis.
	The equality of the last two displays together with Propositions \ref{thm:Bdiagonalization} and \ref{thm:Jdiagonalization} implies that
	\begin{align*}
	\sum_{n=1}^\infty \left(\frac{1}{2}+C_n(k)\right) \DirProd{e_n}{f_m} &= \frac{\pi}{2K(k)} \frac{m(1+q_k^m)}{1-q_k^m} \sum_{n=1}^\infty \left(\frac{1}{2}+C_n(k)\right) \DirProd{e_n}{\mathbf{B}_kf_m} \\
	&= \frac{\pi}{4(1-k)K(k)} \frac{m(1+q_k^m)}{1-q_k^m} \sum_{\ell=1}^\infty\frac{1}{\ell}\DirProd{e_\ell}{\left[I - 2\left(1-\cos\frac{\pi \ell}{2}\right)\mathbf{J}_k\right]f_m}\\
	&=\frac{\pi}{4(1-k)K(k)} \frac{m(1+q_k^m)}{1-q_k^m} \sum_{\ell=1}^\infty\left[1 -2 \frac{1-\cos\frac{\pi \ell}{2}}{q_k^{m/2}+q_k^{-m/2}}\right]\frac{1}{\ell}\DirProd{e_\ell}{f_m}.
	\end{align*}
	Since $f_m$ has a radius of convergence larger than one (Lemma \ref{thm:fmradius}) and $z_{k_1}( \frac{1}{4} + i T_k) = 1$ and $z_{k_1}(\pm iT_k) = \pm i$ (Lemma \ref{thm:mapping}(ii)), we have the special values $f_m(\pm i) = g_m( \pm iT_k)$ and $f_m(1) = g_m(\frac{1}{4} + i T_k)$, with 
	$g_m$ as defined in Lemma \ref{thm:fourierbasis}.
	Therefore we may evaluate
	\begin{align*}
	\sum_{\ell=1}^\infty \frac{1}{\ell} \DirProd{e_\ell}{f_m} &= \sum_{\ell=1}^\infty [z^\ell]f_m(z) = f_m(1) = (-1)^{m}\cosh(2\pi m T_k)-\cos\frac{\pi m}{2}, \\
	\sum_{\ell=1}^\infty \frac{\cos\frac{\pi \ell}{2}}{\ell} \DirProd{e_\ell}{f_m} &= \sum_{\ell=1}^\infty \frac{i^\ell+(-i)^\ell}{2}[z^\ell]f_m(z) = \frac{f_m(i)+f_m(-i)}{2} = \cos\frac{\pi m}{2}\left(\cosh(2\pi m T_k)-1\right).
	\end{align*}
	Combining the last two displays yields
	\[
	\sum_{\ell=1}^\infty\left[1 - 2\frac{1-\cos\frac{\pi \ell}{2}}{q_k^{m/2}+q_k^{-m/2}}\right]\frac{1}{\ell}\DirProd{e_\ell}{f_m} = (-1)^m (\cosh(2\pi mT_k)-1) = \frac{(-1)^m}{2}(q_k^{m/4}-q_k^{-m/4})^2,
	\]
	which leads to the stated result.
\end{proof}

\begin{proof}[Proof of Theorem \ref{thm:hypsecant}]
	For $\alpha\in\frac{\pi}{2}\Z+\frac{\pi}{4}$ and $p\geq 1$ we introduce the set $\mathcal{G}_p^{(\alpha)}$ of non-empty simple diagonal walks $w$ starting at $(p,0)$ and ending at an arbitrary location that have no intermediate visits to the origin, i.e. $w_i \neq (0,0)$ for $i \leq |w|-1$, and that have winding angle $\theta^w_{|w|-1/2}  \in (\alpha-\frac{\pi}{4},\alpha+\frac{\pi}{4})$. 
	See Figure \ref{fig:freedecomposition1} for an example with $p=1$ and $\alpha = -5\pi/4$.
	
		\begin{figure}[t]
			\centering
			\begin{subfigure}[c]{0.35\textwidth}
				\centering
				\includegraphics[height=.8\linewidth]{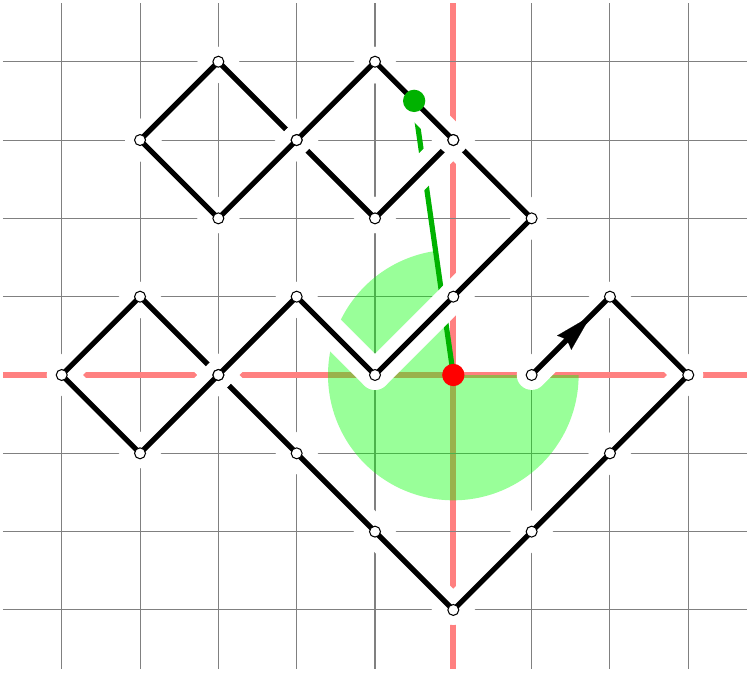}
				\caption{}\label{fig:freedecomposition1}
			\end{subfigure}%
			\begin{subfigure}[c]{0.35\textwidth}
				\centering
				\includegraphics[height=.8\linewidth]{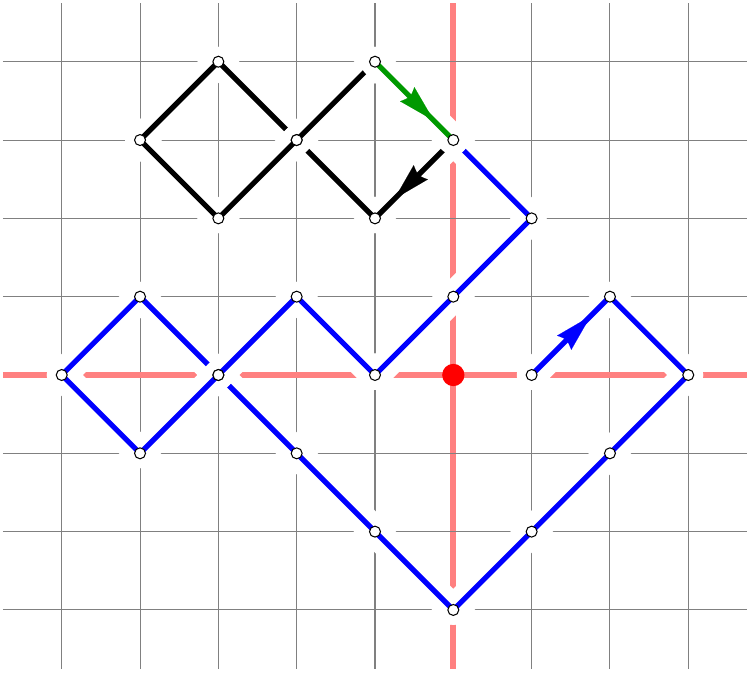}
				\caption{}\label{fig:freedecomposition2}
			\end{subfigure}%
			\caption{(a) An unconstrained walk $w$ with $\theta^w_{|w|-1/2} \in (-3\pi/2,-\pi)$. (b) Its decomposition into $w'\in \mathcal{W}_{3,1}^{(-3\pi/2)}$ (blue), $w''\in \mathcal{C}_3$ (black) and $s = (1,-1)$ (green). }\label{fig:freedecomposition}
		\end{figure}%
	It is not hard to see that such a walk can be uniquely encoded in a triple $(w',w'',s)$, where $w' \in \mathcal{W}_{n,p}^{(\alpha-\pi/4)}\cup \mathcal{W}_{n,p}^{(\alpha+\pi/4)}$ for some $n\geq 1$, $w''$ is empty or $w'' \in \mathcal{C}_n$ and $s \in \{ (1,1),(1,-1),(-1,-1),(-1,1)\}$ a single step (see Figure \ref{fig:freedecomposition2}). 
	Any such triple gives rise to a walk in $\mathcal{G}_p^{(\alpha)}$ unless $w''$ is empty, in which case $s$ is only allowed to take two of the four possible steps.
	Consequently the generating function $G_p^{(\alpha)}(t)$ of $\mathcal{G}_p^{(\alpha)}$ satisfies the relation
	\begin{align*}
	G_p^{(\alpha)}(t) &= 4t\sum_{n=1}^\infty \left(\frac12 + C_n(t)\right) \left( W_{n,p}^{(\alpha-\pi/4)}(t) + W_{n,p}^{(\alpha+\pi/4)}(t)\right).
	\end{align*}
	With the help of Theorem \ref{thm:mainresult}(i) and Lemma \ref{thm:arbitraryendpointsum} this evaluates to
	\begin{align}
	G_p^{(\alpha)}(t)&= k\sum_{n=1}^\infty \left(\frac12 + C_n(t)\right) \DirProd{e_n}{(\mathbf{Y}_k^{(\alpha-\pi/4)} + \mathbf{Y}_k^{(\alpha+\pi/4)})e_p} \nonumber\\
	&= k\sum_{m,n=1}^\infty  \left(\frac12 + C_n(t)\right)\DirProd{e_n}{f_m} \frac{\DirProd{e_p}{(\mathbf{Y}_k^{(\alpha-\pi/4)} + \mathbf{Y}_k^{(\alpha+\pi/4)})f_m}}{\|f_m\|^2_{\Dir}} \nonumber\\
	&= k\sum_{m,n=1}^\infty \left(\frac12 + C_n(t)\right)\DirProd{e_n}{f_m} \frac{8K(k)}{\pi} \frac{q_k^{m |\alpha|/\pi}(q_k^{m/4}+q_k^{-m/4})}{m^2(q_k^{-m}-q_k^{m})}  \DirProd{e_p}{f_m}\nonumber\\
	&= \frac{k}{1-k} \sum_{m=1}^\infty \frac{(-1)^m}{m}\frac{q_k^{m|\alpha|/\pi}}{q_k^{m/4}+q_k^{-m/4}}\DirProd{e_p}{f_m}. \label{eq:Galphaexpression} 
	\end{align}

	\begin{figure}
		\centering
		\includegraphics[width=.6\linewidth]{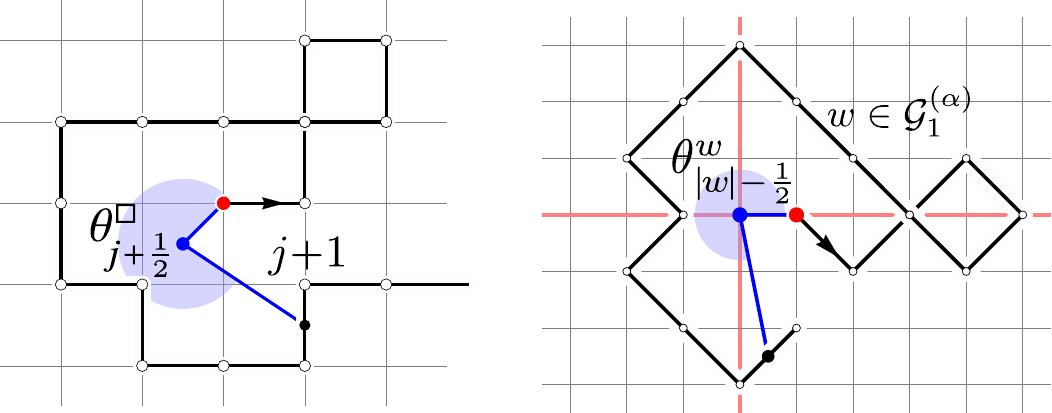}
		\caption{The probability that the winding angle $\theta^\square_{j+\frac{1}{2}}$ around $(-\frac12,-\frac12)$ of the simple random walk $(W_i)_i$ up to time $j+\frac{1}{2}$ takes values in $(\alpha-\frac{\pi}{4},\alpha+\frac{\pi}{4})$ is related to the counting of walks $w\in\mathcal{G}_1^{(\alpha)}$. }\label{fig:freewindingdiagonal}
	\end{figure}
	We turn to the distribution of the winding angle $\theta_{\zeta_k+1/2}^{\square}$ around $(-1/2,-1/2)$.
	By a suitable rotation and translation of the walk (see Figure \ref{fig:freewindingdiagonal}) we may relate the generating function $G_p^{(\alpha)}(t)$ at $p=1$ with the probability that $\theta_{\zeta_k+1/2}^{\square}$ takes values in $(\alpha - \frac{\pi}{4} ,\alpha + \frac{\pi}{4})$ with $\alpha\in\frac{\pi}{2}\Z+\frac{\pi}{4}$, 
	\[
	\prob\big[\theta^{\square}_{\zeta_k+1/2} \in ({\textstyle \alpha - \frac{\pi}{4} ,\alpha + \frac{\pi}{4}})\big] = \frac{1-k}{k} G_1^{(\alpha)}(k/4).
	\]
	Note that $\DirProd{e_1}{f_m}=0$ for $m$ even, while  
	\[
	\DirProd{e_1}{f_m} = f_m'(0) = (-1)^{(m+1)/2} 2\pi m \,v_{k_1}'(0) = \frac{(-1)^{(m+1)/2} \pi m}{2\sqrt{k_1}K(k_1)} =  \frac{(-1)^{(m+1)/2} \pi m}{k K(k)}
	\]
	for $m$ odd, where we used that $2\sqrt{k_1} K(k_1) = 2/(\sqrt{k_1}+1/\sqrt{k_1}) K(k) = k K(k)$ according to \eqref{eq:landenk} and \eqref{eq:kone}.
	With this we can evaluate \eqref{eq:Galphaexpression} at $p=1$ to
	\[
	G_1^{(\alpha)}(t) = \frac{1}{1-k} \frac{\pi}{K(k)} \sum_{\ell=0}^\infty (-1)^\ell \frac{q_k^{(2\ell+1)|\alpha|/\pi}}{q_k^{(2\ell+1)/4}+q_k^{-(2\ell+1)/4}}.
	\]
	In particular, for $\alpha \in \frac{\pi}{2}\Z$ and $\alpha \neq 0$ we have
	\begin{align*}
	\prob\big[ &\theta^\square_{\zeta_k+1/2} \in (\alpha -{\textstyle\frac{\pi}{2}}, \alpha +{\textstyle\frac{\pi}{2}})\big] = \frac{1-k}{k}\left( 
	G_1^{(\alpha-\frac{\pi}{4})}(k/4) + G_1^{(\alpha+\frac{\pi}{4})}(k/4)\right) \\
	&=  \frac{\pi}{k K(k)} \sum_{\ell=0}^\infty (-1)^\ell q_k^{(2\ell+1)|\alpha|/\pi}= \frac{\pi}{k K(k)} \frac{1}{q_k^{\alpha/\pi}+q_k^{-\alpha/\pi}}= \frac{\pi}{2k K(k)}\sech\left(4T_k \alpha\right).
	\end{align*}
	Note that the second equality does not hold for $\alpha = 0$, but using that these probabilities with $\alpha\in\pi\Z$ must sum to one, we deduce with the help of $\sum_{\alpha\in\pi\Z} \sech(4T_k \alpha) = 2K(k)/\pi$ \cite[Equation (19)]{bruckman_evaluation_1977} that
	\[
	\prob\big[\theta^\square_{\zeta_k+1/2} \in (-{\textstyle\frac{\pi}{2}}, {\textstyle\frac{\pi}{2}})\big] = 1-\sum_{\substack{\alpha\in\pi\Z\\ \alpha\neq 0}} \prob\big[\theta^\square_{\zeta_k+1/2} \in (\alpha-{\textstyle\frac{\pi}{2}},\alpha+ {\textstyle\frac{\pi}{2}})\big] = 1 - \frac{1}{k} + \frac{\pi}{2k K(k)}. 
	\]
	
	In the case of the winding angle around the origin and $\alpha\in\frac{\pi}{2}\Z+\frac{\pi}{4}$, we may identify
	\begin{equation}\label{eq:windingoriginexpr}
	\prob\big[\theta^{\bullet}_{\zeta_k+1/2} \in {\textstyle(\alpha - \frac{\pi}{2}, \alpha + \frac{\pi}{2})}\big] = \frac{1-k}{k} \left( \hat{G}^{(\alpha-\frac{\pi}{4})}(k/4) + \hat{G}^{(\alpha+\frac{\pi}{4})}(k/4) \right),
	\end{equation}
	where $\hat{G}^{(\alpha')}(t)$, $\alpha'\in \frac{\pi}{2}\Z$, is the generating function for the set $\hat{\mathcal{G}}^{(\alpha')}$ of non-empty simple diagonal walks $w$ starting at $(0,0)$ and ending at arbitrary location with no intermediate visits to the origin, i.e. $w_i\neq (0,0)$ for $1 \leq i \leq |w|-1$, and $\theta^w_{|w|-1/2} \in (\alpha'-\frac{\pi}{4},\alpha'+\frac{\pi}{4})$.
	We claim that \eqref{eq:windingoriginexpr} for $\alpha \in (\frac{\pi}{2}\Z+\frac{\pi}{4}) \setminus \{\pm \frac{\pi}{4}\}$ can be computed via the absolutely convergent sum
	\begin{equation}\label{eq:GGhatrelation}
	\hat{G}^{(\alpha-\frac{\pi}{4})}(t)+\hat{G}^{(\alpha+\frac{\pi}{4})}(t) = 4\sum_{p=1}^\infty (-1)^{p-1} G_{2p}^{(\alpha)}(t).
	\end{equation}
	Note that this sum fails to be absolutely convergence for $\alpha = \pm \pi/4$, since $G_{2p}^{(\pm\pi/4)}(t) = 2t + O(t^2)$ for any $p \geq 1$.
	By an argument similar to that used in Lemma \ref{thm:Falphacombinatorics}, the alternating sum counts walks $w'\in\mathcal{G}_{2}^{(\alpha)}$ starting at $(2,0)$ and having $w'_1 = (1,\pm 1)$.
	By moving the starting point of $w'$ to the origin we obtain a walk $w$ with winding angle $\theta^{w}_{|w|-1/2} = \theta^{w'}_{|w'|-1/2} \mp \pi/4$ if $w_1=(1,\pm1)$.
	Since $\theta^{w'}_{|w'|-1/2}\in (\alpha-\frac{\pi}{4},\alpha+\frac{\pi}{4})$, these walks correspond precisely to $\{ w\in \hat{\mathcal{G}}^{(\alpha-\frac{\pi}{4})} : w_1 = (1,1) \}$ and $\{ w\in \hat{\mathcal{G}}^{(\alpha+\frac{\pi}{4})} : w_1 = (1,-1) \}$, and include therefore precisely $1/4$ of the desired walks $\hat{\mathcal{G}}^{(\alpha-\frac{\pi}{4})} \cup\hat{\mathcal{G}}^{(\alpha+\frac{\pi}{4})}$.
	This verifies the expression \eqref{eq:GGhatrelation}.
	
	With the help of \eqref{eq:Galphaexpression} and \eqref{eq:falternating} (and absolute convergence) this evaluates to
	\begin{align}
	\hat{G}^{(\alpha-\frac{\pi}{4})}(t)+\hat{G}^{(\alpha+\frac{\pi}{4})}(t) &= \frac{4k}{1-k}\sum_{p,m=1}^\infty \frac{(-1)^{m+p-1}}{m} \frac{q_k^{m |\alpha|/\pi}}{q_k^{m/4}+q_k^{-m/4}} \DirProd{e_{2p}}{f_m} \nonumber\\
	& = \frac{k}{1-k} \frac{\pi}{K(k)} \sum_{n=1}^\infty (-1)^{n-1} (q_k^{-n/2}-q_k^{n/2})q_k^{2n|\alpha|/\pi}\nonumber\\
	& = \frac{k}{1-k} \frac{\pi}{K(k)} \left( \frac{1}{1+q^{\frac{1}{2}+\frac{2\alpha}{\pi}}} -\frac{1}{1+q^{-\frac{1}{2}+\frac{2\alpha}{\pi}}} \right),\label{eq:Ghatexpression}
	\end{align}
	where we dropped the absolute value $|\cdot|$ around $\alpha$ because the expression is invariant under $\alpha \to -\alpha$.
	Combining with \eqref{eq:windingoriginexpr} this leads to the claimed probability for $\alpha \neq \pm \frac{\pi}{4}$.
	
	Finally we show that \eqref{eq:Ghatexpression} is valid also for $\alpha = \pm \frac{\pi}{4}$.
	To this end it is sufficient to verify that the resulting expression for $\sum_{\alpha'\in\frac{\pi}{2}\Z} \hat{G}^{(\alpha')}(t)$ corresponds to the generating function of walks from the origin with no intermediate returns.
	The latter is related to the excursion generating function $F(t,b)$ of Proposition \ref{thm:excursiongenfun} by
	\[
	\sum_{\alpha'\in\frac{\pi}{2}\Z} \hat{G}^{(\alpha')}(t) = \frac{k}{1-k}(1-F(t,0)) = \frac{k}{1-k}\frac{\pi}{2 K(k)}.
	\]
	The sum of \eqref{eq:Ghatexpression} over $\alpha \in \frac{\pi}{2}\Z + \frac{\pi}{4}$, thus double counting each $\hat{G}^{(\alpha')}(t)$ for $\alpha'\in\frac{\pi}{2}\Z$, indeed gives twice this expression.
\end{proof}

\begin{proof}[Proof of Corollary \ref{thm:charfuns}]
First we note that for $\alpha\in\pi\Z$,
\begin{align*}
\prob\big[ \theta^\square_{\zeta_k-1/2} \in {\textstyle(\alpha - \frac{\pi}{2}, \alpha + \frac{\pi}{2})}\big] &= (1-k)\one_{\{\alpha=0\}} + k\, \prob\big[  \theta^\square_{\zeta_k+1/2} \in {\textstyle(\alpha - \frac{\pi}{2}, \alpha + \frac{\pi}{2})}\big]\\
&=\frac{\pi}{2K(k)}\sech\left(\alpha \frac{K(k')}{K(k)}\right).
\end{align*}
The desired characteristic functions then follow with the help of \cite[16.23.2 \& 16.23.3]{abramowitz_handbook_1964}, 
\begin{align*}
\expec \exp\left(ib\{\theta^\square_{\zeta_k-1/2}\}_{\pi\Z}\right) &= \frac{\pi}{K(k)}\sum_{n=-\infty}^\infty \frac{e^{ib n\pi}}{q_k^{n}+q_k^{-n}} = \dn(K(k)b,k),\\
\expec \exp\left(ib\{\theta^\square_{\zeta_k+1/2}\}_{\pi\Z+\frac{\pi}{2}}\right) &= \frac{\pi}{k K(k)}\sum_{n=-\infty}^\infty \frac{e^{ib (n+1/2)\pi}}{q_k^{n+1/2}+q_k^{-n-1/2}} = \cn(K(k)b,k).
\end{align*}
\end{proof}

\section{Winding angle of loops}\label{sec:loops}

Another, rather interesting application of Theorem \ref{thm:mainresult} is the counting of \emph{loops} on $\Z^2$.
To be precise, for integer $n\neq 0$, let the set $\mathcal{L}_n$ of \emph{rooted loops of index $n$} be the set of simple diagonal walks $w$ on $\Z^2\setminus\{(0,0)\}$ that start and end at the same (arbitrary) point and have winding angle $\theta^w = 2\pi n$.
The set $\mathcal{L}_n=\mathcal{L}_n^{\text{even}} \cup \mathcal{L}_n^{\text{odd}}$ naturally partitions into the \emph{even loops} $\mathcal{L}_n^{\text{even}}$ supported on $\{ (x,y)\in \Z^2 : x+y\text{ even, }(x,y)\neq (0,0)\}$ and the \emph{odd loops} $\mathcal{L}_n^{\text{odd}}$ on $\{ (x,y)\in \Z^2 : x+y\text{ odd}\}$.

\begin{theorem}\label{thm:loopgenfun}
The (``inverse-size biased'') generating functions for $\mathcal{L}_n^{\text{even}}$ and $\mathcal{L}_n^{\text{odd}}$ are given by
\begin{align*}
L^{\text{even}}_n(t)\coloneqq\sum_{w\in\mathcal{L}^{\text{even}}_n} \frac{t^{|w|}}{|w|} &= 
\frac{1}{|n|} \tr_\Dir \mathbf{P}^{\text{even}}\mathbf{J}_k^{(2\pi |n|,-\infty)} = \frac{1}{|n|} \frac{q_k^{4|n|}}{1-q_k^{4|n|}},\\
L^{\text{odd}}_n(t)\coloneqq\sum_{w\in\mathcal{L}^{\text{odd}}_n} \frac{t^{|w|}}{|w|} &= 
\frac{1}{|n|} \tr_\Dir \mathbf{P}^{\text{odd}}\mathbf{J}_k^{(2\pi |n|,-\infty)} = \frac{1}{|n|} \frac{q_k^{2|n|}}{1-q_k^{4|n|}},
\end{align*}
where $\mathbf{P}^{\text{even}}$ (respectively $\mathbf{P}^{\text{odd}}$) is the projection operator onto the even (respectively odd) functions in $\Dir$. 
\end{theorem}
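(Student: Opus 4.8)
The plan is to prove the two displayed equalities on each line separately: the closed-form (right-hand) equalities are an immediate spectral computation, while the middle equalities, which identify the loop generating functions with operator traces, are the combinatorial heart and rest on a cycle lemma. Throughout I may assume $n>0$, the case $n<0$ following from the reflection symmetry $(\alpha,\beta_-,\beta_+)\to(-\alpha,-\beta_+,-\beta_-)$ of Theorem~\ref{thm:mainresult} applied to loops.

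First I would dispose of the spectral part. Specialising the $\mathbf{J}$-row of Theorem~\ref{thm:mainresult}(ii) to $\alpha=2\pi n$, $\beta_+=\alpha$ and letting $\beta_-\to-\infty$ (legitimate since $q_k\in(0,1)$) gives $\mathbf{J}_k^{(2\pi n,-\infty)}f_m=q_k^{2mn}f_m$. Since the integrand defining $v_{k_1}$ is even, $v_{k_1}$ is odd, and one reads off from \eqref{eq:fmbasis} that $f_m$ is an even (resp.\ odd) function of $z$ exactly when $m$ is even (resp.\ odd). Hence each $f_m$ is an eigenvector of the parity projections, so $\mathbf{P}^{\text{even}}$ and $\mathbf{P}^{\text{odd}}$ are simultaneously diagonal with $\mathbf{J}_k^{(2\pi n,-\infty)}$, and evaluating the trace over the eigenbasis yields $\tr_\Dir\mathbf{P}^{\text{even}}\mathbf{J}_k^{(2\pi n,-\infty)}=\sum_{m\text{ even}}q_k^{2mn}=q_k^{4n}/(1-q_k^{4n})$ and $\tr_\Dir\mathbf{P}^{\text{odd}}\mathbf{J}_k^{(2\pi n,-\infty)}=\sum_{m\text{ odd}}q_k^{2mn}=q_k^{2n}/(1-q_k^{4n})$, the claimed closed forms.

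Next I would unpack the trace combinatorially. Writing it in the basis $(e_p)$ with $\|e_p\|_\Dir^2=p$, and using the $\mathbf{J}$-normalisation $\frac1p\langle e_p,\mathbf{J}_k^{(2\pi n,-\infty)}e_p\rangle_\Dir=W_{p,p}^{(2\pi n,(-\infty,2\pi n))}(t)$, I obtain $\tr_\Dir\mathbf{P}^{\text{even}}\mathbf{J}_k^{(2\pi n,-\infty)}=\sum_{p\text{ even}}W_{p,p}^{(2\pi n,(-\infty,2\pi n))}(t)$. A walk counted by $W_{p,p}^{(2\pi n,(-\infty,2\pi n))}(t)$ starts at $(p,0)$, has total winding $2\pi n$ (so it ends on the positive horizontal axis, necessarily back at $(p,0)$), and keeps its intermediate winding strictly below $2\pi n$. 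Thus this sum equals $\sum_{\tilde w}t^{|\tilde w|}$ over the set $\tilde{\mathcal{L}}_n^{\text{even}}$ of rooted even loops that are based on the positive horizontal axis and whose winding reaches $2\pi n$ only at the final step, the parity bookkeeping being automatic since $(p,0)$ is an even point exactly when $p$ is even. It then remains to prove $\sum_{w\in\mathcal{L}_n^{\text{even}}}t^{|w|}/|w|=\frac1n\sum_{\tilde w\in\tilde{\mathcal{L}}_n^{\text{even}}}t^{|\tilde w|}$.

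The hard part, and the step I expect to be the main obstacle, is this last identity, which I would establish by a cycle lemma. Because one diagonal step changes the $y$-coordinate by $\pm1$, a loop of nonzero index can only pass the positive $x$-axis by landing on a point $(x,0)$ with $x>0$, where the winding equals $2\pi\ell$ for an integer level $\ell$; in particular every index-$n$ loop visits that axis. Listing the positive-axis visits cyclically with levels $\ell_1,\ell_2,\dots$, the increments $a_j=\ell_j-\ell_{j-1}$ lie in $\{-1,0,1\}$ (a level cannot change by more than one between consecutive positive-axis visits) and sum to $n$ over one period. The elements of $\tilde{\mathcal{L}}_n^{\text{even}}$ are precisely the re-rootings at positive-axis visits from which the partial level-sums stay strictly below $n$ until the period closes; after reversing the increment sequence this becomes the standard ``all partial sums positive'' condition, so the Dvoretzky--Motzkin cycle lemma yields exactly $n$ canonical re-rootings per cyclic class. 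Comparing a cyclic class of length $L$ and symmetry order $s$, it contributes $L/s$ distinct based loops on the left (total weight $\frac1L\cdot\frac Ls=\frac1s$) and $n/s$ distinct canonical ones on the right (total weight $\frac1n\cdot\frac ns=\frac1s$), so the weighted sums match class by class. The delicate points to verify carefully are that the winding cannot overshoot $2\pi n$ without an intervening positive-axis visit (using that a single step subtends an angle less than $2\pi$, so the continuous winding cannot skip a full turn) and that the cycle-lemma count remains compatible with loop symmetries as just indicated; the odd case is identical, with base points at odd $p$.
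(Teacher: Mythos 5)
Your proof is correct and follows essentially the same route as the paper: the same spectral evaluation of the trace over the parity-graded eigenbasis $(f_m)$, the same identification of $\tr_\Dir \mathbf{P}^{\text{even/odd}}\mathbf{J}_k^{(2\pi n,-\infty)}$ with loops rooted at a positive-axis visit whose winding first attains $2\pi n$ at the very end, and the same cycle lemma applied to the sequence of winding levels at positive-axis visits (justified, as you note, by the fact that a diagonal step can only cross the axis at a lattice point). The only difference is cosmetic: the paper converts the ``exactly $n$ good re-rootings'' count into the weighted generating-function identity by double-counting pairs $(w,j)$ with $w^{(j)}\in\hat{\mathcal{L}}_n$, which sidesteps the symmetry-order bookkeeping that you carry out explicitly (and correctly).
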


\begin{proof}
Without loss of generality we will take $n>0$, since the case of negative $n$ then follows from symmetry.
Let us consider the subset 
\[
\hat{\mathcal{L}}_n \coloneqq \{ w\in \mathcal{L}_n: w_0 \in\{ (x,0): x>0\}\text{ and }\theta^w_i < \theta^w\text{ for }0\leq i<|w|\}
\] 
of rooted loops of index $n>0$ that start on the positive $x$-axis and that attain the winding angle $2n\pi$ only at the very end.
Clearly $\hat{\mathcal{L}}_n = \bigcup_{p\geq 1} \mathcal{W}_{p,p}^{(2\pi n,(-\infty,2\pi n))}$ in the notation of Theorem \ref{thm:mainresult}. 
Similarly $\hat{\mathcal{L}}_n^{\text{even/odd}} \coloneqq \hat{\mathcal{L}}_n \cap \mathcal{L}^{\text{even/odd}}=\bigcup_{p\text{ even/odd}} \mathcal{W}_{p,p}^{(2\pi n,(-\infty,2\pi n))}$.
The generating functions of $\hat{\mathcal{L}}_n^{\text{even}}$ and $\hat{\mathcal{L}}_n^{\text{odd}}$ are therefore given by
\[
\sum_{w\in\hat{\mathcal{L}}^{\text{even/odd}}_n} t^{|w|} = \sum_{p\text{ even/odd} }\frac{1}{p} \DirProd{e_{p}}{\mathbf{J}_k^{(2\pi n,-\infty)}e_{p}} = \tr_{\Dir} \mathbf{P}^{\text{even/odd}} \mathbf{J}_k^{(2n\pi,-\infty)} = \sum_{m\text{ even/odd}} q_k^{2 m n},
\]
where we used that (according to Theorem \ref{thm:mainresult}(ii)) $\mathbf{J}_k^{(2n\pi,-\infty)}$ has eigenvalues $(q_k^{2nm})_{m\geq 1}$ and that the even and odd subspaces of $\Dir$ are spanned by the even respectively odd elements of the basis $(f_m)_{m\geq 1}$.
Hence,
\begin{equation}\label{eq:Lhatgenfun}
\sum_{w\in\hat{\mathcal{L}}^{\text{even}}_n} t^{|w|} = \frac{q_k^{4n}}{1-q_k^{4n}} \qquad\text{and}\qquad \sum_{w\in\hat{\mathcal{L}}^{\text{odd}}_n} t^{|w|} = \frac{q_k^{2n}}{1-q_k^{4n}}.
\end{equation}

Now suppose we take a general loop $w\in \mathcal{L}_n$.
We denote by $w^{(j)}\in\mathcal{L}_n$, $1\leq j \leq |w|$, the cyclic permutation of $w$ given by the walk $w^{(j)} \coloneqq (w_j, w_{j+1}, \ldots , w_{|w|}, w_1, \ldots, w_{j})$.
We claim that among these $|w|$ cyclic permutations are exactly $n$ elements of $\hat{\mathcal{L}}_n$.

To see this, let $(i_\ell)_{\ell=1}^m$ be the sequence of increasing times (in $\{1,2,\ldots,|w|\}$) at which $w$ intersects the positive $x$-axis. 
Then $w^{(i_\ell)}$, $1\leq \ell \leq m$, are potential candidates for walks in $\hat{\mathcal{L}}_n$, since they start on the positive $x$-axis. 
For each such walk $w'=w^{(i_\ell)}$ we may consider the winding angle sequence $(\theta^{w'}_i)_{i=0}^{|w'|}$ as well as the subsequence $(\alpha^{(\ell)}_j)_{j=0}^m$ of $(\theta^{w'}_i)_{i=0}^{|w'|}$ containing just those angles in $2\pi\Z$.
Then $(\alpha^{(\ell)}_j)_{j=0}^m$ describes a walk on $2\pi\Z$ from $0$ to $2\pi n$ with steps in $\{-2\pi,0,2\pi\}$, and $w^{(i_\ell)}\in\hat{\mathcal{L}}_n$ precisely when this walk stays strictly below $2\pi n$ until the very end.
Since the walks $(\alpha^{(\ell)}_j)_{j=0}^m$, $1\leq \ell\leq m$, correspond precisely to an equivalence class under cyclic permutation of the increments, a well-known cycle lemma (or ballot theorem) tells us that the latter condition, hence $w^{(i_\ell)}\in\hat{\mathcal{L}}_n$,  is satisfied for exactly $n$ values of $\ell$.

The claim implies that 
\[
\sum_{w\in \mathcal{L}_n} \frac{t^{|w|}}{|w|} = \frac{1}{n}\sum_{w\in \mathcal{L}_n} \frac{t^{|w|}}{|w|}\sum_{j=1}^{|w|} \one_{\{w^{(j)}\in \hat{\mathcal{L}}_n \}}  = \frac{1}{n} \sum_{w\in\hat{\mathcal{L}}_n} \frac{t^{|w|}}{|w|} \sum_{j=1}^{|w|} \one_{\{w^{(j)}\in\mathcal{L}_n\}} = \frac{1}{n} \sum_{w\in\hat{\mathcal{L}}_n} t^{|w|}.
\]
This identity restricted to the even and odd subspaces together with \eqref{eq:Lhatgenfun} then gives the desired expressions. 
\end{proof}

For the rest of this section we switch to simple rectilinear rooted loops on $\Z^2$.
For such a loop $w$ we let the \emph{index} $I^w:\R^2\to\Z$ be defined by setting $I^w(z) = 0$ when $z$ lies on the trajectory of $w$ and otherwise $2\pi I^w(z)$ is the winding angle of $w$ around the point $z$.
By a suitable affine transformation we may now equally think of $\mathcal{L}_n^{\text{odd}}$, $n\neq 0$, as the set of simple rooted loops $w$ on $\Z^2$ with index $I^w(z)=n$ with respect to some fixed off-lattice point, say, $z=(1/2,1/2)$.
Similarly, $\mathcal{L}_n^{\text{even}}$, $n\neq 0$, is in $1$-to-$1$ correspondence with such loops that have index $n$ with respect to a fixed lattice point, say, the origin.
The following probabilistic result takes advantage of this point of view.

\begin{corollary}
For $\ell\geq 1$, let $W=(W_i)_{i=0}^{2\ell}$ be a simple random walk on $\Z^2$ conditioned to return to the origin after $2\ell$ steps.
For $n\neq 0$, let $C_n$ be the set of connected components of $(I^W)^{-1}(n)$.
Then
\begin{align*}
\expec\left[ \sum_{c\in C_n} |c|\right] &= \frac{4^{2\ell}}{\binom{2\ell}{\ell}^2}\frac{2\ell}{n} [k^{2\ell}]\frac{q_k^{2n}}{1-q_k^{4n}} \sim \frac{\ell}{2\pi n^2},\\
\expec\left[ \sum_{c\in C_n} (|\partial c|-2)\right] &= \frac{4^{2\ell}}{\binom{2\ell}{\ell}^2}\frac{4\ell}{n} [k^{2\ell}]\frac{q_k^{2n}}{1+q_k^{2n}} \sim \frac{2\pi^3 \ell}{\log^2 \ell},
\end{align*}
where $|c|$ is the area of $c\in C_n$ and $|\partial c|$ the boundary length of $c$ (see Figure \ref{fig:loopcomponents}). 
The asymptotic formulas hold as $\ell\to\infty$ and $n$ fixed.
\end{corollary}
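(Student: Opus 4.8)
The plan is to reduce both expectations to the loop generating functions of Theorem~\ref{thm:loopgenfun} by means of two purely deterministic (per-walk) identities, and then to extract the exact formulas and their asymptotics by singularity analysis.

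First I would fix a closed rectilinear walk $W=(W_i)_{i=0}^{2l}$ based at the origin and read off two geometric facts about its index field $I^W$. Since $I^W$ is constant on the faces of $\Z^2$, the total area $\sum_{c\in C_n}|c|$ equals the number $\#_\square$ of dual points $z\in(\Z+1/2)^2$ with $I^W(z)=n$, while a lattice point $z\in\Z^2$ satisfies $I^W(z)=n\neq0$ precisely when it is an interior vertex of some cluster; write $\#_\bullet$ for the number of such points. The crucial structural observation is that, the trajectory of $W$ being connected, each bounded complementary component is simply connected, so every cluster $c$ is a topological disk even when it carries slits (an edge of the trajectory flanked on both sides by index-$n$ faces is cut open and contributes twice to $|\partial c|$, which is exactly what renders the slit region simply connected). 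Applying Euler's relation $V-E+F=1$ to the disk built from the $F=|c|$ faces of $c$ after doubling the slit edges, combined with $4F=2E^{\mathrm{int}}+|\partial c|$ and $V^{\partial}=|\partial c|$ (the boundary is a single cycle), gives the per-cluster identity $|\partial c|-2=2(|c|-V^{\mathrm{int}}_c)$. Summing over $c\in C_n$ yields the two deterministic identities $\sum_{c}|c|=\#_\square$ and $\sum_{c}(|\partial c|-2)=2(\#_\square-\#_\bullet)$, so that after averaging over the $\binom{2l}{l}^2$ closed walks it remains to compute $\expec[\#_\square]$ and $\expec[\#_\bullet]$.

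Next I would match the walk-sums $\sum_W\#_\square(W)$ and $\sum_W\#_\bullet(W)$ with loop counts. Translating a pair $(W,z)$ so that the marked point $z$ lands on the fixed reference point --- the dual point $(1/2,1/2)$ when $z\in(\Z+1/2)^2$, and the origin when $z\in\Z^2$ --- is a bijection onto the rooted loops of $\mathcal{L}_n^{\mathrm{odd}}$, respectively $\mathcal{L}_n^{\mathrm{even}}$, of length $2l$. Since the weight $t^{|w|}/|w|$ defining $L_n^{\mathrm{odd}/\mathrm{even}}$ cancels the choice of root, this gives $\sum_W\#_\square(W)=2l\,4^{2l}[k^{2l}]L_n^{\mathrm{odd}}$ and $\sum_W\#_\bullet(W)=2l\,4^{2l}[k^{2l}]L_n^{\mathrm{even}}$. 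Dividing by $\binom{2l}{l}^2$ and inserting the closed forms of Theorem~\ref{thm:loopgenfun} reproduces the area formula at once, and the boundary formula after the simplification
\[
n\bigl(L_n^{\mathrm{odd}}(t)-L_n^{\mathrm{even}}(t)\bigr)=\frac{q_k^{2n}-q_k^{4n}}{1-q_k^{4n}}=\frac{q_k^{2n}}{1+q_k^{2n}}.
\]

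Finally I would carry out the singularity analysis as $l\to\infty$. From $\binom{2l}{l}^2\sim 4^{2l}/(\pi l)$ the prefactor $4^{2l}/\binom{2l}{l}^2\sim\pi l$. As $k\to1$ one has $q_k\to1$ with $1-q_k\sim\pi^2/(2K(k))$, whence $q_k^{2n}/(1-q_k^{4n})\sim K(k)/(2\pi^2 n)$ and $q_k^{2n}/(1+q_k^{2n})\sim 1/2-n\pi^2/(4K(k))$. Combining the transfer estimates $[k^{2l}]K(k)\sim 1/(2l)$ and $[k^{2l}]K(k)^{-1}\sim -2/(l\log^2 l)$ --- the latter being exactly the singularity already exploited for $F(t,0)=1-\pi/(2K(k))$ in Lemma~\ref{thm:Fasymptotics} --- then produces the stated asymptotics $l/(2\pi n^2)$ and $2\pi^3 l/\log^2 l$. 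I expect the main obstacle to be the deterministic boundary identity of the first step: justifying that each cluster is simply connected and bookkeeping the slit contributions correctly in Euler's formula, and, on the analytic side, tracking the sign in the $1/\log^2$ transfer so that the boundary asymptotic comes out positive.
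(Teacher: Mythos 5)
Your proposal is correct and follows essentially the same route as the paper: the per-cluster identities $\sum_c|c|=\#_\square$ and $|\partial c|-2=2(|c|-|c\cap\Z^2|)$ (which the paper dismisses as ``a simple counting exercise'' and you justify via Euler's formula on the slit-cut disk), the translation bijection with rooted loops of $\mathcal{L}_n^{\mathrm{odd}}$ and $\mathcal{L}_n^{\mathrm{even}}$, Theorem \ref{thm:loopgenfun}, and transfer theorems at $k=\pm1$. Your asymptotic bookkeeping in terms of $K(k)$ is equivalent to the paper's expansion in $\log(1-k^2)$, and all constants check out.
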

\begin{proof}
The left and right sides of the identities are all seen to be invariant under $n\to-n$, so we may restrict to the case $n>0$.
A simple counting exercise shows that for any $c\in C_n$ the area and boundary length of $c$ can be expressed in terms of the cardinalities $|c \cap \Z^2|$ and $|c \cap (\Z+1/2)^2|$ as
\[ |c| = |c \cap (\Z+1/2)^2| \qquad\text{and}\qquad |\partial c| = 2|c \cap (\Z+1/2)^2| - 2|c \cap \Z^2| + 2. \]
Hence, we have that  
\begin{align*}
\sum_{c\in C_n} |c| = \sum_{z\in(\Z+1/2)^2} \one_{\{I^W(z)=n\}} &= \sum_w \one_{\{I^w(1/2,1/2) = n\}},\\
\sum_{c\in C_n} (|c|+1-|\partial c|/2) = \sum_{z\in\Z^2} \one_{\{I^W(z)=n\}} &= \sum_{w} \one_{\{I^w(0,0) = n\}},
\end{align*}
where the last sum on both lines is over all possible translations $w$ of $W$ by a vector in $\Z^2$.
The collection of translations that have index $I^w(1/2,1/2)=n$ (respectively $I^w(0,0)=n$) indexed by all possible $W$ precisely determines a partition of the loops of length $2\ell$ in $\mathcal{L}_n^{\text{odd}}$ (respectively $\mathcal{L}_n^{\text{even}}$).
Since the probability of any particular walk $W$ is $\binom{2\ell}{\ell}^{-2}$ we find with the help of Theorem \ref{thm:loopgenfun} that
\begin{align*}
\expec\left[ \sum_{c\in C_n} |c|\right] = \frac{1}{\binom{2\ell}{\ell}^2} [t^{2\ell}] \sum_{w\in\mathcal{L}_n^{\text{odd}}} t^{|w|} = \frac{1}{\binom{2\ell}{\ell}^2} 2\ell\, [t^{2\ell}] L_n^{\text{odd}}(t) = \frac{4^{2\ell}}{\binom{2\ell}{\ell}^2}\frac{2\ell}{n} [k^{2\ell}]\frac{q_k^{2n}}{1-q_k^{4n}},\\
\expec\left[ \sum_{c\in C_n} (|c|+1-|\partial c|/2)\right] = \frac{1}{\binom{2\ell}{\ell}^2} [t^{2\ell}] \sum_{w\in\mathcal{L}_n^{\text{even}}} t^{|w|} = \frac{1}{\binom{2\ell}{\ell}^2} 2\ell\, [t^{2\ell}] L_n^{\text{even}}(t) = \frac{4^{2\ell}}{\binom{2\ell}{\ell}^2}\frac{2\ell}{n} [k^{2\ell}]\frac{q_k^{4n}}{1-q_k^{4n}}.
\end{align*}
The first line and the difference between the two lines agree with the claimed formulas.

Since $k\mapsto q_k$ is analytic in $\C \setminus \{ k\in\R : |k| \geq 1\}$ and $|q_k| < 1$ (see \eqref{eq:nomebound} in Appendix \ref{sec:ellipticappendix}), it follows that both $k\mapsto q_k^{2n}/(1-q_k^{4n})$ and $k\mapsto q_k^{2n}/(1+q_k^{2n})$ are $\Delta$-analytic \cite{flajolet_singularity_1990} with singularities at $k=\pm1$.
Since $q_k = \exp\left(\frac{\pi^2}{\log q_{k'}}\right) = 1 + \frac{\pi^2}{\log(1-k^2)} + O( \log^{-2}(1-k^2) )$ as $k\to\pm1$, we find 
\[
\frac{q_k^{2n}}{1-q_k^{4n}} = -\frac{1}{4n} \frac{\log(1-k^2)}{\pi^2} + O(1) \qquad\text{and}\qquad \frac{q_k^{2n}}{1+q_k^{2n}} = \frac{1}{2}+\frac{n}{2} \frac{\pi^2}{\log(1-k^2)} + O(\log^{-2}(1-k^2)).
\]
Standard transfer theorems (see \cite{flajolet_singularity_1990}) then imply
\[
[k^{2\ell}] \frac{q_k^{2n}}{1-q_k^{4n}} \sim \frac{1}{4n \pi^2 \ell} \qquad\text{and}\qquad [k^{2\ell}] \frac{q_k^{2n}}{1+q_k^{2n}} \sim \frac{\pi^2 n}{2\ell\,\log^2\ell}\qquad\text{as }\ell\to\infty.
\]
Together with $4^{2\ell}\binom{2\ell}{\ell}^{-2} \sim \pi \ell$ these give rise to the stated asymptotics. 
\end{proof}

\appendix

\section{Elliptic functions}\label{sec:ellipticappendix}

This work depends heavily on elliptic functions and their properties. 
Given that the required properties are scattered in the literature and that different sources often use inconsistent notation, we provide here a summary of the definitions and required material.
We refer to \cite{abramowitz_handbook_1964,akhiezer_elements_1990,borwein_pi_1987} for background and proofs.
 
The \emph{complete elliptic integral of the first kind} with \emph{elliptic modulus} $k$ in the open unit disc $\disc$ is defined by
\begin{equation}
K(k) = \int_{0}^1 \frac{\rmd y}{\sqrt{(1-y^2)(1-k^2y^2)}},
\end{equation}
while the \emph{complete elliptic integral of the second kind} is
\begin{equation}
E(k) = \int_{0}^1 \sqrt{\frac{1-k^2 y^2}{1-y^2}}\rmd y.
\end{equation}
Both are analytic in $k \in \disc$ with series expansions around the origin given by \cite[17.3.11-12]{abramowitz_handbook_1964}
\begin{equation}\label{eq:Kexpansion}
K(k) = \frac{\pi}{2} \sum_{n=0}^\infty \binom{2n}{n}^2 \left(\frac{k}{4}\right)^{2n}, \qquad E(k) = \frac{\pi}{2} \sum_{n=0}^\infty \frac{1}{1-2n}\binom{2n}{n}^2 \left(\frac{k}{4}\right)^{2n}.
\end{equation}
For $k\in\disc$, the \emph{complementary modulus} $k'\in\disc$ and the associated elliptic integrals $K'$ and $E'$ are defined as
\begin{equation}\label{eq:kprime}
k' = \sqrt{1-k^2},\qquad K'(k) = K(k'), \qquad E'(k) = E(k'),
\end{equation}
where the primes should not be confused with the derivatives.
They satisfy Legendre's relation \cite[17.3.13]{abramowitz_handbook_1964}
\begin{equation}\label{eq:legendre}
E(k) K'(k) + E'(k)K(k) - K(k) K'(k) = \tfrac{\pi}{2}.
\end{equation}
The \emph{elliptic nome} $q_k \in \disc$ and the \emph{quarter-period ratio} $T_k \in (0,\infty) + i\R$ are 
\begin{equation}\label{eq:nomedef}
q_k = e^{-\pi K'(k) / K(k)} = e^{-4\pi T_k}, \qquad T_k = \frac{K'(k)}{4 K(k)}.
\end{equation}
The functions $K(k)$, $q_k$ and $T_k$ can be analytically continued to the double-slit plane $k\in\C \setminus \{ z\in \R : z^2 \geq 1\}$, where they satisfy \cite[Section 2]{walker_analyticity_2003}
\begin{equation}\label{eq:nomebound}
|q_k| < 1,\qquad \re(T_k) > 0, \qquad \text K(k) \neq 0.
\end{equation}
Note that \eqref{eq:nomedef} and \eqref{eq:kprime} imply that $q_{1/\sqrt{2}} = e^{-\pi}$, therefore
\begin{equation}\label{eq:nomebound2}
|q_k| \leq e^{-\pi} < \tfrac{1}{20}\quad \text{for } |k| \leq \tfrac{1}{\sqrt{2}}.
\end{equation}

For $q\in\disc$ and $z\in\C$, the \emph{Jacobi theta functions} are defined via the absolutely convergent series
\begin{align}
\theta_1(z,q) &= 2 \sum_{n=0}^\infty (-1)^n q^{(n+1/2)^2} \sin((2n+1)z), \label{eq:theta1def}\\
\theta_2(z,q) &= 2 \sum_{n=0}^\infty q^{(n+1/2)^2} \cos((2n+1)z), \\
\theta_3(z,q) &= 1+2 \sum_{n=1}^\infty q^{n^2} \cos(2nz), \\
\theta_4(z,q) &= 1+2 \sum_{n=1}^\infty (-1)^n q^{n^2} \cos(2nz).
\end{align}
By \eqref{eq:nomebound} the theta functions $k\mapsto\theta_i(z,q_k)$ evaluated at $q=q_k$ and $z$ fixed are analytic in the slit plane  $k\in\C \setminus \{ z\in \R : z^2 \geq 1\}$. 
The squared elliptic modulus and the corresponding complete elliptic integral can be expressed in terms of $q=q_k$ via \cite[Theorem 2.1 \& (2.1.13)]{borwein_pi_1987}
\begin{align}
k^2 &= \frac{\theta_2(0,q)^4}{\theta_3(0,q)^4} = 16 q - 128 q^2 + 704 q^4 + \cdots,\\
K(k) &= \frac{\pi}{2} \theta_3(0,q)^2 = \frac{\pi }{2}+2 \pi  q+2 \pi  q^2+2 \pi  q^4+ \cdots. \label{eq:Kfromq}
\end{align}
By reversion of the first of these series one obtains the series expansion of $q_k$ around $k=0$ (with radius of convergence equal to $1$),
\begin{equation}\label{eq:nomeexpansion}
q_k = \left(\frac{k}{4}\right)^2 + 8 \left(\frac{k}{4}\right)^4 + 84 \left(\frac{k}{4}\right)^6 + 992 \left(\frac{k}{4}\right)^8 + \cdots,
\end{equation}
which can also be understood on the level of formal power series as the reversion of the series $k^2 = \theta_2(0,q)^4/\theta_3(0,q)^4 \in \R[\![q]\!]$.

The \emph{Jacobi elliptic functions} $\sn(\cdot, k)$, $\cn(\cdot, k)$ and $\dn(\cdot,k)$ are defined in terms of the Jacobi theta functions $\theta_i(z,q)$ with argument $z = \frac{\pi}{2 K(k)} u$ and nome $q=q_k$ via
\begin{equation}
\sn(u,k) = \frac{\theta_3(0,q_k)}{\theta_2(0,q_k)} \frac{\theta_1(z,q_k)}{\theta_4(z,q_k)}, \quad
\cn(u,k) = \frac{\theta_4(0,q_k)}{\theta_2(0,q_k)} \frac{\theta_2(z,q_k)}{\theta_4(z,q_k)}, \quad 
\dn(u,k) = \frac{\theta_4(0,q_k)}{\theta_3(0,q_k)} \frac{\theta_3(z,q_k)}{\theta_4(z,q_k)}.
\end{equation}
Alternatively, they are the unique biperiodic meromorphic functions periodic under $u\to u+4K(k)$ and $u\to u+4 i K'(k)$ satisfying \cite[\S 24]{akhiezer_elements_1990}
\begin{align}\label{eq:ellipticintegrals}
u = \int_0^{\sn(u,k)} \!\!\!\!\frac{\rmd y}{\sqrt{(1-y^2)(1-k^2 y^2)}} = \int_{\cn(u,k)}^1 \frac{\rmd y}{\sqrt{(1-y^2)(k'^2+k^2 y^2)}} = \int_{\dn(u,k)}^1 \frac{\rmd y}{\sqrt{(1-y^2)(y^2 - k'^2)}},
\end{align}
in a neighbourhood of $u = 0$. 
They satisfy the quadratic relations \cite[16.9.1]{abramowitz_handbook_1964}
\begin{equation}\label{eq:quadraticrelations}
\sn^2(u,k) + \cn^2(u,k) = k^2 \sn^2(u,k) + \dn^2(u,k) = 1
\end{equation}
and various argument shift relations \cite[16.8]{abramowitz_handbook_1964}, e.g.
\begin{equation}\label{eq:snshift}
\sn(u +i K'(k), k) = \frac{1}{k \sn(u,k)}, \qquad\sn(u + 2i K'(k), k) =  \sn(u,k).
\end{equation}
Series representations of the Jacobi elliptic functions can be obtained from \eqref{eq:ellipticintegrals} by expanding the integrand in $y$, followed by integration and series reversion, \cite[16.22]{abramowitz_handbook_1964}
\begin{align*}
\sn(u,k) & = u - \frac{1}{6} (1+ k^2) u^3 + \frac{1}{120} (1+14 k^2 + k^4) u^5 + \cdots, \\
\cn(u,k) & = 1 - \frac{1}{2} u^2 + \frac{1}{24} (1+4k^2) u^4 - \frac{1}{720}(1+44 k^2+16 k^4) u^6 + \cdots, \\
\dn(u,k) & = 1 - \frac{1}{2} k^2 u^2 + \frac{1}{24} (4 k^2+ k^4) u^4 - \frac{1}{720}( 16 k^2 + 44 k^4 + k^6) u^6 + \cdots.
\end{align*}

The \emph{Jacobi zeta function} is given in terms of the Jacobi theta functions with argument $z=\frac{\pi}{2K(k)}u$ by \cite[16.34.1 \& 16.34.4]{abramowitz_handbook_1964}
\begin{equation}\label{eq:zetadef}
Z(u,k) = \frac{\pi}{2 K(k)} \frac{\theta_4'(z,k)}{\theta_4(z,k)} = \frac{\pi}{2 K(k)} \frac{\theta_1'(z,k)}{\theta_1(z,k)} - \frac{\cn(u,k)\dn(u,k)}{\sn(u,k)},  
\end{equation}
where $\theta_i'(z,k) = \frac{\partial}{\partial z}\theta_i(z,k)$. It is periodic in $u$ with period $2K(k)$ \cite[17.4.30]{abramowitz_handbook_1964} and satisfies the addition formula \cite[17.4.35]{abramowitz_handbook_1964}
\begin{equation}\label{eq:zetaaddition}
Z(u+v,k) = Z(u,k) + Z(v,k) - k^2\sn(u,k)\sn(v,k)\sn(u+v,k).
\end{equation}

We often encounter elliptic functions in which the modulus $k \in (0,1)$ is replaced by its \emph{descending Landen transformation} $k_1\in(0,1)$ defined as
\begin{equation}\label{eq:kone}
k_1 = \frac{1-k'}{1+k'} = \frac{1-\sqrt{1-k^2}}{1+\sqrt{1-k^2}}, \qquad k = \frac{2}{\sqrt{k_1}+1/\sqrt{k_1}}.
\end{equation}
The elliptic integrals, quarter-period ratio and nome transform as \cite[Theorem 1.2]{borwein_pi_1987}
\begin{equation}\label{eq:landenk}
K(k) = (1+k_1) K(k_1), \qquad K'(k) = \frac{1}{2}(1+k_1)K'(k_1), \qquad T_{k_1} = 2 T_k, \qquad q_{k_1} = q_k^2.
\end{equation}
The Jacobi elliptic and theta functions also transform relatively nicely under Landen transformations, e.g. \cite[16.12.2]{abramowitz_handbook_1964}
\begin{align}\label{eq:landensn}
\sn(u,k) &= \frac{(1+k_1)\sn(u/(1+k_1),k_1)}{1+ k_1 \sn^2(u/(1+k_1),k_1)}. 
\end{align}
See \cite[Section 16.12]{abramowitz_handbook_1964}, \cite[\S 38]{akhiezer_elements_1990} \cite[Section 2.7]{borwein_pi_1987} for more such identities.

%

\end{document}